\newif\ifARXIV
\theoremstyle{plain}
\newtheorem{theorem}{Theorem}
\newtheorem{algorithm}{Algorithm}
\newtheorem{algorithmscheme}{Algorithmic Scheme}
\newtheorem{definition}[theorem]{Definition}
\newtheorem{lemma}[theorem]{Lemma}
\newtheorem{proposition}[theorem]{Proposition}
\newtheorem{remark}[theorem]{Remark}
\numberwithin{equation}{section}
\newcommand{\UsubR}{U_R}
\newcommand{\CHd}{\mathcal{H}_{(d)}}
\newcommand{\CH}[1]{\mathcal{H}_{#1}}
\newcommand{\mfa}{\mathfrak{a}}
\newcommand{\mfb}{\mathfrak{b}}
\newcommand{\Pn}{\P(\C^{n+1})}
\newcommand{\PHd}{\P(\mathcal{H}_{(d)})}
\def\Computeapproxzero{{\sc ShortZero}}
\def\TrackReallyLinearHomotopy{{\sc TrackSegment\_Scheme}}
\def\TrackReallyLinearHomotopypractical{{\sc TrackSegment}}
\def\LowerUpperBoundt{{\sc LUquadratic}}
\def\Signcompute{{\sc Computesign}}
\def\ii{{\mathbf{i}}}
\def\eps{\varepsilon}
\def\C{{\mathbb C}}
\def\Q{{\mathbb Q}}
\def\R{{\mathbb R}}
\def\Z{{\mathbb Z}}
\def\P{{\mathbb P}}
\def\S{{\mathbb S}}
\newcommand{\Gr}{\operatorname{Gr}}
\newcommand{\Real}{\operatorname{Re}}
\newcommand{\mun}{\mu_{\rm norm}}
\def \constantForHomotopyComplexity{79}
\def \ConstantctwooverPtwo{\frac{17}{50000}}
\title{Robust certified numerical homotopy tracking}
\author{Carlos Beltr\'an}
        \thanks{
\ifARXIV
C. Beltr\'an.
\else
C. Beltr\'an (corresponding author). 
\fi
Departamento de Matem\'aticas, Estad\'istica y Computaci\'on, Universidad de Cantabria,
Spain
               ({\tt beltranc@unican.es}). Partially supported by MTM2010-16051, Spanish Ministry of Science
(MICINN)}
\author{Anton Leykin}
        \thanks{Anton Leykin. School of Mathematics, Georgia Tech, Atlanta GA, USA ({\tt leykin@math.gatech.edu}).
\ifARXIV
         Partially supported by NSF grants DMS-0914802 and DMS-1151297
\else
         Partially supported by NSF grants DMS-0914802 and DMS-1151297.
        \\
         \noindent{Communicated by Peter Buergisser}\fi
}
\date{\today}
 \subjclass[2010]{14Q20,65H20,68W30}
 \keywords{Symbolic--numeric methods, polynomial systems, complexity, condition metric, homotopy method, rational computation, computer proof}
\begin{document}
\begin{abstract}
We describe, for the first time, a completely rigorous homotopy (path--following) algorithm (in the Turing machine model) to find approximate zeros of systems of polynomial equations. If the coordinates of the input systems and the initial zero are rational our algorithm involves only rational computations and if the homotopy is well posed an approximate zero with integer coordinates of the target system is obtained. The total bit complexity is linear in the length of the path in the condition metric, and polynomial in the logarithm of the maximum of the condition number along the path, and in the size of the input.
\end{abstract}
\maketitle

\section{Introduction}

The research on solving systems of polynomial equations has experienced a rapid development in the last two decades, both from a theoretical and from a practical perspective. Many of the recent advances are based on the study of the general idea of homotopy continuation methods: let $f$ be the system whose solutions we want to find, and let $g$ be another system whose solutions we already know. Then, join $g$ and $f$ with a {\em homotopy}, that is a curve $f_t$ in the vector space of polynomial systems, such that $f_0=g$ and $f_1=f$, and try to follow the curves (homotopy paths) produced as a solution $\zeta_0$ of $g$ is continued along the homotopy to a solution $\zeta_t$ of $f_t$. When $t$ approaches $1$, an approximation of a zero $\zeta_1$ of $f$ is obtained.

In order to describe such a method explicitly, we need two essential ingredients:

\begin{enumerate}
\item A construction of the starting system $g$ and the homotopy path $f_t$, and,
\item \label{item:2} once this path $f_t$ is chosen, a procedure to approximate $\zeta_t$ (for a finite sequence of values of $t$ starting with $t=0$ and ending with $t=1$).
\end{enumerate}

The first of these two ingredients has been intensively studied from many perspectives, mainly using linear homotopy paths, i.e. once $(g,\zeta_0)$ is chosen, $\zeta_0$ a solution of $g$, we just consider the path $f_t=(1-t)g+tf$. In \cite{ShSm94} a particularly simple choice of $(g,\zeta_0)$ was conjectured to be a good candidate for a initial pair, i.e. the complexity of homotopy methods with this starting pair could be polynomial on the average. This is still a challenging open conjecture that has been experimentally confirmed in \cite{BeltranLeykin2009}. In \cite{BePa:FoCM}, \cite{BePa07}, \cite{BeltranPardoFLH} it was proved that randomly chosen pairs $(g,\zeta_0)$ guarantee average polynomial complexity. In \cite{BurgisserCuckerTa} a system whose zeros have coordinates equal to the roots of unity of appropriate degrees was proved to guarantee average quasi--polynomial complexity (polynomial for fixed degree.)

In this paper we deal with the second of the two questions above, restricting ourselves to the case when the homotopy path that is followed is regular, i.e., $\zeta_t$ is a regular zero of $f_t$ for every $t\in [0,1]$.

There exist several software packages which perform the path--following task of item $(\ref{item:2})$ above (here is an incomplete list: Bertini \cite{Bertini}, HOM4PS2 \cite{HOM4PSwww}, NAG4M2~\cite{Leykin:NAG4M2}, and PHCpack \cite{V99}). In general, an initial step $t_0$ is chosen, and a {\em predictor} step (a numerical integration step of the differential equation $d(f_t(\zeta_t))/dt=0$) is made to approximate $\zeta_{t_0}$. A {\em corrector} step (several steps of Newton's method) is then used to get a better approximation of $\zeta_{t_0}$. This process is repeated by choosing $t_1,t_2,\ldots$ until $t=1$ is reached. The software implementations mentioned above achieve spectacular practical results, with huge systems solved in a surprisingly short time. As a drawback, these fast methods make heuristic choices (notably the choice of $t_i$), which may introduce uncertainty in the quality of the solutions they provide: {\em how close to an actual zero is the given output? is the method actually following the path $\zeta_t$ or maybe a path--jumping occurred in the middle?}

In Figure \ref{fig:jumping} we illustrate a {\em path--jumping} phenomenon that may occur when a heuristic predictor--corrector path--tracking procedure is used.
 \begin{figure}
 \begin{picture}(300,220)
   \put(10,10){\includegraphics[scale=0.85, keepaspectratio]{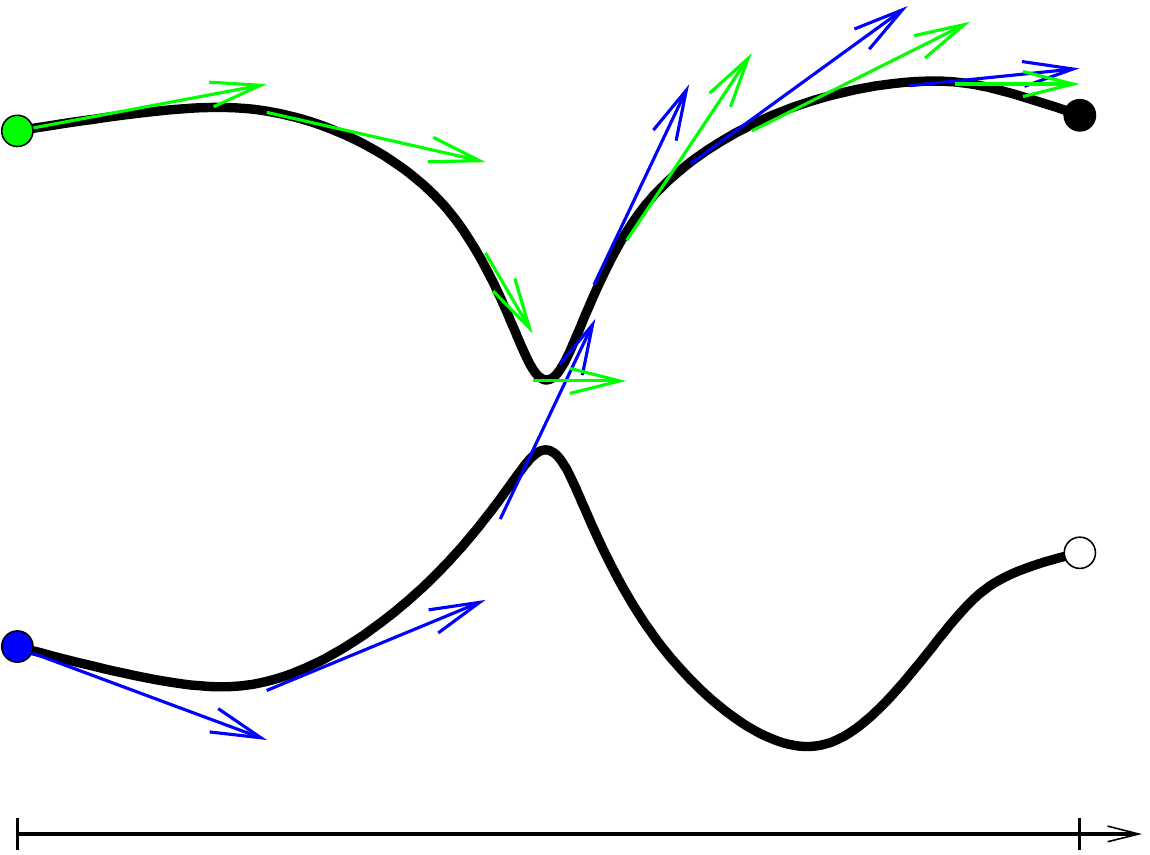}}
   \put(10,0){$0$}
   \put(2,55){$1$}
   \put(2,185){$2$}
   \put(0,122){start}
   \put(14,140){\vector(0,1){40}}
   \put(14,110){\vector(0,-1){40}}
   \put(270,0){$1$}
   \put(281,76){{\bf ???}}
   \put(281,190){$1,2$}
   \put(258,135){target}
   \put(275,150){\vector(0,1){35}}
   \put(275,125){\vector(0,-1){35}}
\end{picture}
 \caption{Path--jumping scenario: we start following the path corresponding to a certain solution of $f_0$ but we may jump to another path in the middle.}
 \label{fig:jumping}
    \end{figure}

In the problems with aim at computing all target solutions, this scenario can be detected simply by observing that two approximation sequences produce approximations to the same regular zero. The Kim--Smale $\alpha$--test from \cite{Kim85}, \cite{Sm86} can be used to make this task rigorously, see \cite{Hauenstein-Sottile:alphaCertified} for an implementation of that test. Once detected, this can be remedied by rerunning the heuristic procedure with tighter tolerances and higher precision of the computation.

However, an analysis of the end solutions would fail to detect the shortcomings of a heuristic method in the scenario with two approximation sequences ``swapping'' two paths as in Figure~\ref{fig:swapping}
 \begin{figure}
 \begin{picture}(300,220)
   \put(10,10){\includegraphics[scale=0.85, keepaspectratio]{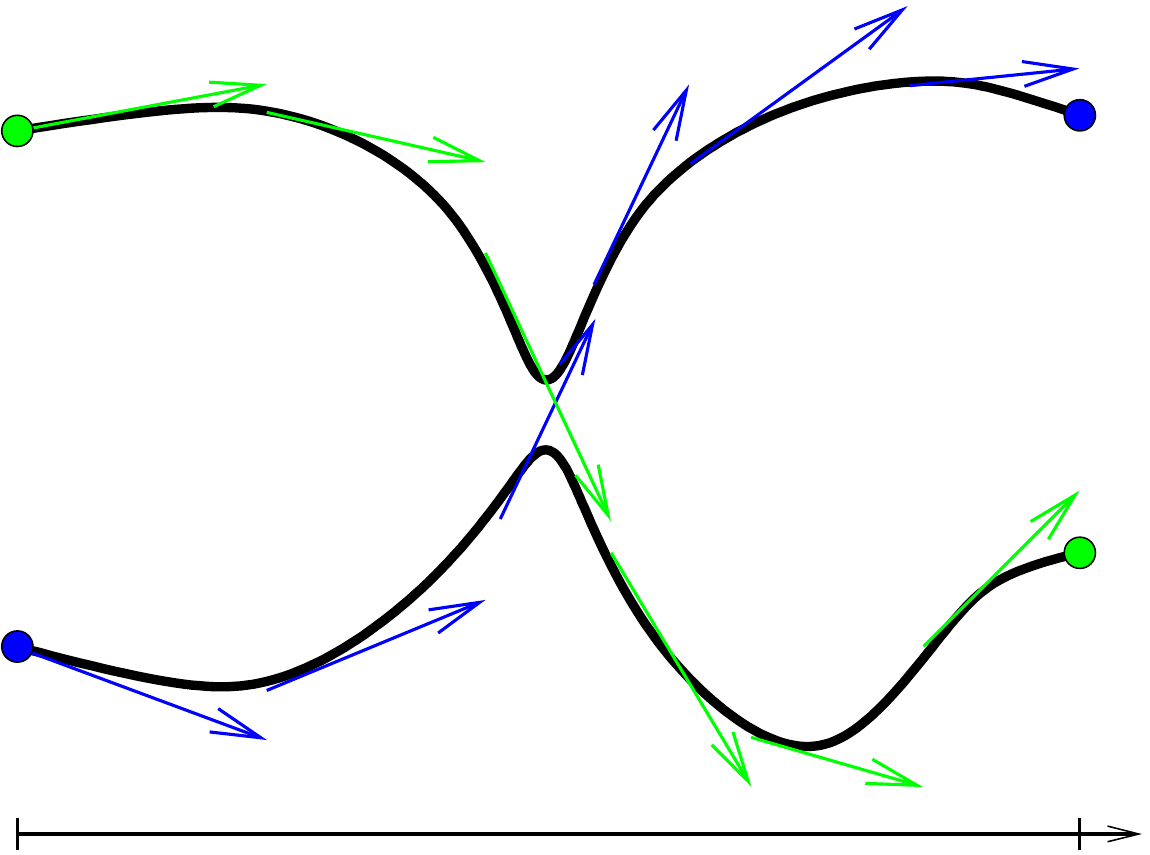}}
   \put(10,0){$0$}
   \put(2,55){$1$}
   \put(2,185){$2$}
   \put(0,122){start}
   \put(14,140){\vector(0,1){40}}
   \put(14,110){\vector(0,-1){40}}
   \put(270,0){$1$}
   \put(281,76){$2$}
   \put(281,190){$1$}
   \put(258,135){target}
   \put(275,150){\vector(0,1){35}}
   \put(275,125){\vector(0,-1){35}}
\end{picture}
 \caption{Path--swapping scenario: two path--jumps occur producing correct (although permuted) target solutions.}\label{fig:swapping}
    \end{figure}

In \cite{ShSm94}, a method which guarantees that path--jumping does not occur was shown for the first time, and its complexity (number of homotopy steps) was bounded above by a quantity depending on the maximum of the so--called {\em condition number} along the path $(f_t,\zeta_t)$, see (\ref{eq:conditionnumber}) below. This result was recently improved in \cite{Shub2007}, changing the maximum of the condition number along the path to the length of the path $(f_t,\zeta_t)$ in the ``condition metric'' (see Section \ref{sec:compandcond} for details). However, the result in \cite{Shub2007} does not fully describe an algorithm, for the explicit choice of the steps $t_i$ is not given. Describing a way to actually choose these steps is a nontrivial task that can be done in several fashions. There are three independent papers doing this job: \cite{Beltran2009NM}, \cite{BurgisserCuckerTa}, \cite{DedieuMalajovichShubTa}. We briefly summarize in table \ref{table:methods} the properties of the algorithms in those papers and of the algorithm in this paper too. The proof of the algorithm in \cite{BurgisserCuckerTa} is probably the shortest of the ones mentioned. As a small drawback, its complexity is not bounded above by the length of the path $(f_t,\zeta_t)$ in the condition metric, but by the integral of the square of the condition number along the path $f_t$, which is in general a (non--dramatically) greater quantity.

\begin{table}
\caption{The existing methods with certified output and complexity analysis for path--tracking. By arithmetic on $\R$ we mean that the BSS computation model \cite{BlShSm} is assumed, that is exact arithmetical operations between real numbers are allowed. C.L. means length in the condition metric.}
\label{table:methods}
\begin{tabular}{|c|c|c|c|}
\hline
Method&Complexity&Arithmetic&Assumptions on $f_t$ and comments\\
\hline
\cite{Shub2007}&C.L.&$\R$&$C^1$ paths $f_t$. Not constructive.\\
\hline
\cite{Beltran2009NM}&C.L.&$\R$&$C^{1+Lip}$ paths $f_t$.\\
\hline
\cite{BurgisserCuckerTa}&$>\approx$C.L. &$\R$&Linear paths.\\
\hline
\cite{DedieuMalajovichShubTa}&C.L.&$\R$&$C^1$ paths $f_t$ with special properties.\\
\hline
This paper&C.L.&$\Q$&Linear paths.\\
\hline

\end{tabular}
\end{table}

All methods in Table \ref{table:methods} are originally designed for systems of homogeneous polynomials, and then an argument like that in \cite{BePa07} is used to produce affine approximate zeros of the original system, if this one was not homogeneous.

Another difference from the heuristic methods described above is that there is no predictor step in these methods; only Newton's method is used (more exactly, projective Newton's method \cite{Sh93} described below): once $t_0$ is chosen, one obtains $z_{t_0}$ as the result of (projective) Newton's method with base system $f_{t_0}$ and base point $z_0=\zeta_0$ (or $z_0$ an approximate zero of $\zeta_0$). The idea is repeated, again, until $t=1$ is reached.

Using the algorithm of \cite{Beltran2009NM}, the main result of \cite{Shub2007} reads as follows.
\begin{theorem}\cite{Shub2007}, \cite{Beltran2009NM}\label{th:numbestepsintro}
Assuming exact numerical computations, and assuming that the path $f_t$ is a great circle in the sphere $\S$ in the space of homogeneous systems, the steps $t_0,t_1,\ldots$ can be chosen in such a way that the homotopy method outlined above produces an approximate zero of $f=f_1$ with associated exact zero $\zeta_1$. The total number of steps is at most a small constant $71 d^{3/2}$ ($d$ the maximum of the degrees of the polynomials in $f$) times $\mathcal{C}_0=\mathcal{C}_0(f_t,\zeta_t)$, that is the length of the path $(f_t,\zeta_t)$ in the so--called condition metric (if that length is infinity, the algorithm may never finish.)
\end{theorem}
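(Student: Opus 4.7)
\emph{Proof plan.} The aim is to design an adaptive step rule $t_i \mapsto t_{i+1}$ and an iterate update $z_{i+1} = N_{f_{t_{i+1}}}(z_i)$ so that (a) $z_i$ remains a projective approximate zero of $f_{t_i}$ with associated exact zero $\zeta_{t_i}$ for every $i$, and (b) each step consumes a fixed proportion of the condition length of $(f_t, \zeta_t)$. Part (b) alone then yields the count $O(d^{3/2} \mathcal{C}_0)$.

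I would assemble three standard ingredients from Smale's $\alpha$-theory and condition theory. First, Shub's projective $\gamma$-theorem: there exists a universal $c_1>0$ such that if $d_{\mathrm{FS}}(z,\zeta) \leq c_1/(d^{3/2}\mun(f,\zeta))$ then $z$ is an approximate zero of $f$ associated to $\zeta$; this rests on the higher-derivative bound $\gamma(f,\zeta) \leq \tfrac12 d^{3/2}\mun(f,\zeta)$. Second, a one-step contraction estimate: if $z$ lies within a slightly larger ball of radius $c_2/(d^{3/2}\mun(f,\zeta))$, then $N_f(z)$ falls inside the ball of the first ingredient. Third, a Lipschitz property of $\mun$ on the solution variety $V = \{(g,w) \in \S \times \P(\C^{n+1}) : g(w)=0\}$: there is a universal $c_3$ such that any two points of $V$ at product-distance $\leq c_3/\mun$ have comparable values of $\mun$. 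A companion statement off $V$ will allow the computable substitution of $z_i$ for $\zeta_{t_i}$ in the step rule.

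With these in hand, fix a universal constant $c^* = \Theta(d^{-3/2})$ calibrated against $c_1, c_2, c_3$, and define $t_{i+1}$ as the largest $t > t_i$ such that the condition length of the arc $(f_s, \zeta_s)_{s \in [t_i, t]}$ equals $c^*$ (or $t=1$, whichever comes first). Inductively, $z_i$ is an approximate zero of $f_{t_i}$. By the third ingredient, $\mun$ varies by a constant factor along the arc; by the choice of $c^*$, the endpoint distance in the product metric satisfies
\[
d_{\mathrm{prod}}\bigl((f_{t_i},\zeta_{t_i}), (f_{t_{i+1}},\zeta_{t_{i+1}})\bigr) \leq \frac{c_2}{d^{3/2}\mun(f_{t_i},\zeta_{t_i})};
\]
combined with the fact that $z_i$ is already close to $\zeta_{t_i}$, the triangle inequality places $z_i$ in the one-step contraction ball around $\zeta_{t_{i+1}}$ for $f_{t_{i+1}}$. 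Hence $z_{i+1} = N_{f_{t_{i+1}}}(z_i)$ lies in the approximate-zero ball of the first ingredient. For a great circle, $\|\dot f_t\|$ is constant and $\|\dot\zeta_t\|_{\mathrm{FS}} \leq \mun(f_t,\zeta_t)\|\dot f_t\|$ by the implicit function theorem, so the defining equation for $\Delta t_i$ reduces to an elementary quadrature in $\mun$ that can be evaluated (and then replaced, at the cost of a constant factor, by the computable quadrature using $\mun(f_{t_i}, z_i)$). Finally, the total step count is $\mathcal{C}_0/c^* = O(d^{3/2}\mathcal{C}_0)$, and careful bookkeeping of $c_1, c_2, c_3$ yields the factor $71$.

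\emph{Main obstacle.} The delicate point is the simultaneous calibration of the three constants: one must choose $c^*$ small enough that (iii) holds throughout each arc, that its endpoint still lies inside the contraction ball of (ii) for the previous system, and that the substitution $\mun(f_{t_i},z_i)$ for $\mun(f_{t_i},\zeta_{t_i})$ in the step rule only perturbs the step length by a constant factor. A secondary technicality is passing from ``arc length in the condition metric'' to ``distance between endpoints,'' which requires either a shortest-path bound or a direct concavity estimate along the path.
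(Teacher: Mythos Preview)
Your proposal is correct and matches the approach of the cited works \cite{Shub2007,Beltran2009NM}; note that the present paper does not prove this theorem itself but quotes it, and its own proofs of the closely related Theorems~\ref{th:onestep} and~\ref{th:stepsizereallylinear} follow exactly the structure you describe (Proposition~\ref{prop:aptproj} is your ingredient (i), the definition of approximate zero gives (ii), and Lemma~\ref{lem:homoto1b} is (iii)). The one organizational difference worth noting is that the paper, following \cite{Beltran2009NM}, works from the outset with the computable surrogate $\varphi(g_i,\dot g_i,z_i)$ and expresses the step rule as a bound on the \emph{spherical distance} $d_\S(g_i/\|g_i\|,g_{i+1}/\|g_{i+1}\|)$ (see~\eqref{eq:distancefgteor}); the fact that each step then consumes at least a fixed amount of condition length is derived as a consequence (inequality~\eqref{eq:teorconc2}) via Lemma~\ref{lem:homoto2}, rather than being the defining equation for the step as in your sketch.
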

Here, the sphere $\S$ is the set of systems $f$ such that $\|f\|=1$ where $\|\cdot\|$ is the Bombieri--Weyl norm described below, and the condition metric is the usual product metric in $\S\times\Pn$, point--wise multiplied by the condition number squared, see (\ref{eq:condlength}) below for a precise definition. The main result of \cite{Shub2007}, \cite{Beltran2009NM} also applies to paths which are not great circles, although the constant $71d^{3/2}$ may then vary.

In recent works \cite{BeltranLeykin2009}, \cite{Leykin:NAG4M2} we described a practical implementation of the ``certified'' method of \cite{Beltran2009NM} and used it in various experiments for estimating the average complexity of homotopy tracking.

As we have already pointed out, Theorem \ref{th:numbestepsintro} (and the other existing algorithms for path--tracking) needs {\em exact numerical computations} on real numbers. More precisely, it assumes computations under the BSS model of computation, \cite{BlShSm}, \cite{BlCuShSm98}. While this assumption permits simplified analysis of the method, it is unrealistic in practice. Our main result here is to remove this assumption: if the input polynomials $g$ and $f$ and the initial approximate zero $z_0$ are given in (Gaussian) rational coordinates, then all the computations can be made over the rationals. We center our attention in linear paths, that is paths of the form $f_t=(1-t)g+tf$ where $t\in[0,1]$. It is a natural fact that the condition number plays an important role in the translation of the real--number arithmetic results to rational arithmetic results. We will see that this just produces an extra factor (the logarithm of the maximum of the condition number along the path) in the complexity bounds. Our main result thus reads as follows.

\begin{theorem}[Main]\label{th:mainintro}
Assuming that $g$, $f$ and $z_0$ are given in rational coordinates, and assuming the extra hypotheses (\ref{eq:hyp}) below, the homotopy method can be designed (see \TrackReallyLinearHomotopypractical{} below) to produce an approximate zero $z_*$ with integer coordinates of $f=f_1$ with associated exact zero $\zeta_1$. The total number of steps is a small constant $C \sqrt{n}d^{3/2}$ ($d$ the maximum of the degrees of the polynomials in $f$, and $n+1$ the number of variables) times $\mathcal{C}_0$. The bit complexity of the algorithm is linear in $\mathcal{C}_0$ and polynomial in the following quantities:
\begin{itemize}
\item $n,S,d,h$, where $S$ is the number of nonzero monomials in the dense expansions of $f$ and $g$ and $h$ is a bound for the bit length of the rational numbers appearing in the description of $f,g,z_0$,
\item $\log_2(\max_{t\in[0,1]}\{\mu(f_t,\zeta_t)\})$, and
\item The quantity
\[
\log_2\left(\max\left(1,\frac{\|f-g\|}{\min\{\|f_t\|:0\leq t\leq 1\}}\right)\right),
\]
\end{itemize}
Here, $\|\cdot\|$ is Bombieri--Weyl norm (we recall its definition in Section \ref{sec:Hd}) and $\mu(f_t,\zeta_t)$ is condition number at the pair $(f_t,\zeta_t)$, see (\ref{eq:conditionnumber}) below. The bit size of the integer numbers in the coordinates of $z_*$ is at most $O(\log_2(n)+\log_2\mu(f,\zeta_1))$.
\end{theorem}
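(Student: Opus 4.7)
The plan is to take the algorithm \TrackReallyLinearHomotopy{} of \cite{Beltran2009NM}, whose analysis assumes the BSS model, and upgrade each of its basic operations to a certified rational version with controlled precision, then account for the total bit-cost over the resulting loop. The skeleton of the algorithm is unchanged: starting from $(t_0,z_0)=(0,\zeta_0)$, one keeps a Gaussian rational point $z_i$ which is a projective approximate zero of $f_{t_i}$; one estimates the condition number $\mu(f_{t_i},z_i)$; one uses this estimate to choose a rational step $\Delta t_i$; and one takes a single projective Newton step at $f_{t_{i+1}}$ with $t_{i+1}=t_i+\Delta t_i$ to produce $z_{i+1}$. The only difference with the BSS version is that each numerical subroutine is replaced by a rational one whose output error is quantified in advance, and the approximate-zero invariant is preserved with some slack.

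First I would design the rational subroutines announced in the preamble: \Computeapproxsqrany{} and \Computeapproxsqrgeqone{} (square roots to prescribed bit precision), \Signcompute{} and \Computeapproxzero{} (certified sign and short-rational extraction), and \LowerUpperBoundt, \UpperRootBound, \LowerRootBound{} (rational bounds on roots of the auxiliary univariate polynomial governing the step-size choice). From these I would assemble: (a) a routine $\widetilde\mu(f_{t_i},z_i)$ that, given $p$ extra bits, returns a rational within a factor $1+2^{-p}$ of the true $\mu(f_{t_i},z_i)$; (b) a projective Newton step performed in exact rational arithmetic and then truncated coordinate-wise to $q$ bits, with an explicit bound on the error thereby introduced; and (c) a terminating procedure that, once a sufficiently accurate rational approximate zero $z_*$ of $f_1=f$ is available, outputs a vector of integers still inside the approximate-zero basin around $\zeta_1$.

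The main loop analysis mirrors that of \cite{Beltran2009NM} but with a robustness margin. Using the Lipschitz estimates of $\mu$ along the linear homotopy (exactly as in \cite{Shub2007}) together with Smale's $\gamma$-theorem, I would show that if the step is chosen as
\[
\Delta t_i \;=\; \frac{c}{\widetilde\mu(f_{t_i},z_i)^2\, d^{3/2}}
\]
for a small absolute constant $c$, then a single Newton iterate at $f_{t_{i+1}}$, computed to a suitable precision, is still an approximate zero of $f_{t_{i+1}}$. The precision required at a given step is polynomial in $n,S,d,h$, in $\log_2\mu(f_{t_i},\zeta_{t_i})$, and in $\log_2(\|f-g\|/\min_t\|f_t\|)$ (which controls the denominators arising in $f_{t_i}$). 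The total number of iterations is bounded, as in the BSS version, by a constant multiple of $d^{3/2}\mathcal{C}_0$; the extra $\sqrt{n}$ factor in the main theorem absorbs the slack incurred when replacing the true $\mu$ by $\widetilde\mu$ and the exact Newton step by its truncated rational version, as well as the discrepancy between the $\ell^2$ and $\ell^\infty$ rounding in projective space.

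Summing the bit-operations over the $O(\sqrt{n}d^{3/2}\mathcal{C}_0)$ iterations gives linearity in $\mathcal{C}_0$ and polynomial dependence on the remaining parameters, since each step involves a polynomial (in $n,S,d$) number of rational operations on numbers of polynomial bit-size. For the final integer output, the bit-size bound $O(\log n+\log\mu(f,\zeta_1))$ is precisely the radius of the approximate-zero basin around $\zeta_1$, and rounding to that accuracy is performed by \Computeapproxzero{} applied to $z_*$. The hard part of the proof will be the precision bookkeeping: showing that the rounding errors introduced at each of the $\Theta(\mathcal{C}_0)$ iterations do not accumulate beyond the margin allowed by the approximate-zero inequality, and simultaneously that the precision needed at step $i$ is only \emph{logarithmic} in $\mu(f_{t_i},\zeta_{t_i})$, not in its integral along the path. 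This locality of the precision estimate is exactly what is needed for the bit complexity to be linear, rather than quadratic, in $\mathcal{C}_0$.
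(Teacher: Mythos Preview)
Your high-level outline is sound, but the route you sketch diverges from the paper's in one essential respect, and you misattribute the $\sqrt{n}$ factor.

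The paper does \emph{not} track precision through approximate square roots or compute a $(1+2^{-p})$-accurate $\widetilde\mu$. Instead, it arranges matters so that every quantity appearing in the algorithm is an \emph{exact} rational: it works with the squared Frobenius norm $\tilde\chi_{i,1}^2$ in place of the squared operator norm $\chi_{i,1}^2$ (both are exactly computable over $\Q[\ii]$, but only the former avoids an eigenvalue problem), and the step-size condition (\ref{eq:newstepeqn}) is stated purely in terms of $\varphi_i^2$, so the bounds $L,U_R$ are rational and the choice of $t_i$ reduces to a sign test handled by bisection in \LowerUpperBoundt{}. No call to \Computeapproxsqrany{} is ever needed. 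The only ``rounding'' in the entire loop is the replacement of the exact Newton iterate $z_{i+1}$ by a nearby short integer vector via \Computeapproxzero{}, and item (4) of Theorem~\ref{th:onestep} is stated precisely so that this single perturbation preserves the approximate-zero invariant. Thus the ``hard part'' you anticipate---cumulative precision bookkeeping over $\Theta(\mathcal{C}_0)$ steps---simply does not arise: each step is an exact computation followed by one controlled truncation whose effect is absorbed by the $(1-\delta)$ slack built into (\ref{eq:appztildeteor}).

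Consequently, the $\sqrt{n}$ in the step count is \emph{not} slack from approximating $\mu$ or from $\ell^2/\ell^\infty$ rounding; it is exactly the ratio $\|\cdot\|_F/\|\cdot\|$ incurred by substituting Frobenius for operator norm (Lemma~\ref{lem:frobenius}). Your step-size formula $\Delta t_i = c/(\widetilde\mu^2 d^{3/2})$ is morally right via (\ref{eq:varphivsmui}), but the paper works with $\varphi_i=\chi_{i,1}\chi_{i,2}$ rather than $\mu^2$ directly, which is what makes the condition-length bound (rather than the cruder $\int\mu^2$ bound) go through. Your approach could in principle be made to work, but it would require the error analysis you flag as hard, whereas the paper's exact-rational design sidesteps it entirely.
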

The extra hypotheses that will be specified in (\ref{eq:hyp}) is, in words, that the angle between $g$ and $f-g$ is not too close to $\pi$, that is that the segment $\{f_t:t\in[0,1]\}$ does not point too straight--forward to the origin.

The reader may note that, once this method is proved to work and programmed, it provides a status of mathematical proof to the path--following procedure. Moreover, its complexity does not differ much from the method of Theorem \ref{th:numbestepsintro} and the size of the integer numbers involved is controlled.

In particular, the method can be used to give rigorous proofs to the results obtained via monodromy computation by algorithms in \cite{Leykin-Sottile:HoG}, \cite{Bates-Peterson-Sommese-Wampler:genus}. Note that the implementations of the $\alpha$-test carried out in \cite{Hauenstein-Sottile:alphaCertified}, \cite{Malajovich:PSS} would not be sufficient for the above applications even in the situations where the zero count is known and the $\alpha$-test is capable of certifying the exact expected number of distinct zeroes at the ends of the homotopy paths. This is due to a potential multiple path--jumping (which we just call path--swapping) resulting in a wrong permutation of zeroes produced by tracking monodromy loops. In other words, as we have already pointed out, while in certain cases the $\alpha$-test for the end solutions can resolve the scenario of Figure~\ref{fig:jumping}, it is powerless in the scenario of Figure~\ref{fig:swapping}.

\subsection{Previous works and historical remarks}

Smale \cite{Sm81} proved the first results on the average complexity of polynomial zero--finding using Newton's method. The problem of solving systems of polynomial equations became later one of the cornerstones of complexity theory in the BSS model. In the first paragraph of \cite{Sm81} we read {\em ``Also this work has the effect of helping bring the discrete mathematics of complexity theory of computer science closer to classical calculus and geometry''}. This paper is written with the intention of making another step in that direction. Theorem \ref{th:mainintro} establishes a strong link between the BSS model of computation and the classical Turing machine model, and we believe that this link strengthens both models at the same time. The Turing machine model is proved to accomplish a difficult task until now reserved to numerical solvers. An algorithm designed and analyzed in the BSS model is successfully translated (including its complexity analysis) to the classical model by carefully studying its condition number. Adding up, in the case of path--tracking methods for polynomial system solving,
\[
\text{BSS model }+\text{ condition number analysis } = \text{Turing machine model},
\]
the equality being strong in the sense that the complexity of the discrete algorithm is similar to the complexity of the BSS algorithm. This ``translation'' of computational models can probably be done for many of the algorithms originally designed in the BSS model. The reader may find many of our techniques useful for such a task.

A natural precedent to our work is the algorithm in Malajovich's Ph.D. Theses \cite{Ma93} (see also \cite{Ma94}, \cite{Ma2000}) where a homotopy method for polynomial systems with coefficients in $\Z[\ii]$ is presented. The algorithm in \cite{Ma93} is certified under reasonable assumptions for $\eps$--machines (i.e. floating point machines), in which some intermediate computations are made. Its total complexity is bounded by a number which does not explicitly depend on the condition number of the systems found in the path. In \cite{Ma93}, \cite{Ma2000} there are some ``gap theorems'' which give universal bounds for the value of the condition number in the case that the coefficients of the polynomials are integers (and assuming that the solutions are not singular). The biggest advantage of that approach is that a global complexity bound is found. By ``global'' we mean that it is valid for solving every {\em generic} system; as a drawback, that complexity bound is exponential on the bit size of the entries.

In \cite[Cor. 2.1]{CaHaMoPa}, a universal upper bound for the bit size of rational approximate zeros of smooth zeros of systems with integer coefficients and degrees at most $2$ is given. In general, one expects the condition number to be much smaller than in the worst case (see \cite{ShSm93}, \cite{CaMoPaSa} for results in this direction), and hence the bit size of the output of our algorithm will in general be much smaller than the upper bound of \cite[Cor. 2.1]{CaHaMoPa}.

One alternative to the rational arithmetic approach of this paper to the certification of homotopies could be using interval arithmetic. For example, in a more general setting, \cite{Kearfott} proposes step control by means of isolating a homotopy path in a box around an approximation of a point on the path. While the implementation of \cite{Kearfott} does not provide certification, in principle, interval arithmetic can be used in an attempt to certify homotopy tracking using similar isolation ideas (see \cite{vdHoeven} for an ongoing work in this direction).

\subsection{Acknowledgments}
In earlier stages of the ideas behind this work, we maintained many related conversations with Clement Pernet; thanks go to him for helpful discussions and comments. We also thank Gregorio Malajovich, Luis Miguel Pardo and Michael Shub for their questions and answers. Our beloved friend and colleague Jean Pierre Dedieu also inspired us in many occasions. The second author thanks Institut Mittag-Leffler for hosting him in the Spring semester of 2011. A part of this work was done while we were participating in several workshops related to Foundations of Computational Mathematics in the Fields Institute. We thank this institution for its kind support.
\section{Technical background}

\subsection{The vector space of polynomial systems}\label{sec:Hd}
As mentioned above, we will center our attention on homogeneous systems of equations. For fixed $n\geq1$ and an integer $l\geq1$, let $\CH{l}\subseteq\C[X_0,\ldots,X_n]$ be the vector space of degree $l$ homogeneous polynomials with unknowns $X_0,\ldots,X_n$. As in \cite{ShSm93b}, we consider Bombieri--Weyl's Hermitian product $\langle\cdot,\cdot\rangle_{\CH{l}}$ which preserves the orthogonality of different monomials and satisfies
\[
\langle X_0^{\alpha_0}\cdots X_n^{\alpha_n},X_0^{\alpha_0}\cdots X_n^{\alpha_n}\rangle_{\CH{l}}=\frac{ \alpha_1!\cdots\alpha_n!}{l!}.
\]
For integers $d_i\geq1$, $1\leq i\leq n$ and an $n$--tuple $(d)=(d_1,\ldots,d_n)$, we denote by $\CHd$ the vector space of systems of $n$ homogeneous polynomials of respective degrees $d_1,\ldots,d_n$ in unknowns $X_0,\ldots,X_n$. That is,
\[
\CHd=\CH{d_1}\times\cdots\times\CH{d_n}.
\]
The Hermitian product in $\CHd$ is then
\[
\langle f,g\rangle=\langle f_1,g_1\rangle_{\CH{d_1}}+\cdots+\langle f_n,g_n\rangle_{\CH{d_n}}
\]
where $f=(f_1,\ldots,f_n),\;g=(g_1,\ldots,g_n)\in\CHd$. We also define
\[
\|f\|=\langle f,f\rangle^{1/2}.
\]
The Hermitian product (norm) given by $\langle\cdot,\cdot\rangle$ ($\|\cdot\|$) in $\CHd$ is also called Bombieri--Weyl product (norm). It has a number of nice properties such as invariance under composition with a unitary change of coordinates, see for example \cite[Section 12.1]{BlCuShSm98}. We denote
\[
\S=\{f\in\CHd:\|f\|=1\}.
\]

Our main algorithm (Algorithm \ref{alg: certified homotopy really linear} below) will need to compute $\langle\cdot,\cdot\rangle$ and $\|\cdot\|^2$. This can obviously be done over the (complex) rationals $\Q[\ii]$, if the polynomials involved have coordinates in $\Q[\ii]$.
\begin{remark}
The extra hypothesis Theorem \ref{th:mainintro} needs is
\begin{equation}\label{eq:hyp}
-L_0\|g\|\,\|f-g\|\leq \Real{\langle g,f-g\rangle}\leq \|g\|\,\|f-g\|,
\end{equation}
where $L_0=1-10^{-3}$. While this hypothesis is not actually needed for the method to work (an output will equally be obtained if the hypothesis is not satisfied), it does simplify the complexity analysis of the algorithm significantly. Note that (\ref{eq:hyp}) means that the angle between the two vectors $g$ and $f-g$ in the space of polynomial systems is not too close to $\pi$. This is satisfied by most reasonable choices of paths $f_t$.
\end{remark}
\subsection{Projective Newton method}
We now describe the projective Newton method of \cite{Sh93}. Let $f\in\CHd$ and $z\in\Pn$. Then,
\[
N_\P(f)(z)=z-\left(Df(z)\mid_{z^\perp}\right)^{-1}f(z),
\]
where $Df(z)$ is the $n\times (n+1)$ Jacobian matrix of $f$ at $z\in\Pn$, and
\[
Df(z)\mid_{z^\perp}
\]
is the restriction of the linear operator defined by $Df(z):\C^{n+1}\rightarrow\C^n$ to the orthogonal complement $z^\perp$ of $z$. The reader may check that $N_\P(f)(\lambda z)=\lambda N_\P(f)(z)$, namely $N_\P(f)$ is a well--defined projective operator as long as the linear map $Df(z)\mid_{z^\perp}$ has an inverse. An equivalent expression, better suited for computations, is
\[
N_\P(f)(z)=z-\binom{Df(z)}{z^*}^{-1}\binom{f(z)}{0},
\]
where $z^*$ is the conjugate transpose of $z$. We denote by $N_\P(f)^l(z)$ the result of $l$ consecutive applications of $N_\P(f)$ on initial point $z$.

In general, one cannot expect that the solutions of systems of polynomials have rational coordinates. The goal of solvers is thus to produce rational points which are ``close'' in some sense to actual zeros. Following the approach of \cite{Sm86}, \cite{ShSm93b} we will consider that a point is an ``approximate zero'' of a system of equations if it is in the strong (quadratic)  basin of attraction of the projective Newton method. Namely:
\begin{definition}\label{def:projzppzero}
We say that $z\in\Pn$ is an approximate zero of $f\in\CHd$ with associated (exact) zero $\zeta\in\Pn$ if $N_\P(f)^l(z)$ is defined for all $l\geq0$ and
\[
d_R(N_\P(f)^l(z),\zeta)\leq\frac{d_R(z,\zeta)}{2^{2^{l}-1}},\;\;\;\;l\geq0.
\]
\end{definition}
Here $d_R$ is the Riemann distance in $\Pn$, namely
\[
d_R(z,z')=\arccos\frac{|\langle z,z'\rangle|}{\|z\|\,\|z'\|}\in[0,\pi/2],
\]
where $\langle \cdot,\cdot\rangle$ and $\|\cdot\|$ are the usual Hermitian product and norm in $\C^{n+1}$. Note that $d_R(z,z')$ is the length of the shortest $C^1$ curve with extremes $z,z'\in\Pn$, when $\Pn$ is endowed with the usual Hermitian structure (see for example \cite[Page 226]{BlCuShSm98}.)
\subsection{The condition number}
The condition number at $(f,z)\in\CHd\times\Pn$ introduced in \cite{ShSm93b} is defined as follows:
\begin{equation}\label{eq:conditionnumber}
\mu(f,z)=\|f\|\,\|(Df(z)\mid_{\,z^\perp})^{-1} \mbox{Diag}(\|z\|^{d_i-1}d_i^{1/2})\|,
\end{equation}
or $\mu(f,z)=\infty$ if $Df(z)\mid_{\,z^\perp}$ is not invertible. Here, $\|f\|$ is the Bombieri-Weyl norm of $f$ and the second norm in the product is the operator norm of that linear operator. Note that $\mu(f,z)$ is, up to some normalizing factors, essentially equal to the operator norm of the inverse of the Jacobian  $Df(z)$, restricted to the orthogonal complement of $z$. Sometimes $\mu$ is denoted $\mun$ or $\mu_{\rm proj}$, but we keep the simplest notation here. One of the main properties of $\mu$ is\footnote{This property inspired its definition, see \cite{ShSm93b}.} that it bounds the norm of the implicit function of the mapping $(f,z)\mapsto f(z)$. In other words, following \cite[Sec. 12.3 and 12.4]{BlCuShSm98}:
\begin{lemma}\label{lem:muboundsimplicit}
Let $(g,\zeta_0)\in\CHd\times \Pn$ be such that $g(\zeta_0)=0$, and $\mu(g,\zeta_0)<\infty$. Let $f_t,t\in[0,\varepsilon)$ be a $C^1$ curve in $\CHd$, $f_0=g$. Then, for sufficiently small $t<\varepsilon$, $\zeta_0$ can be continued to a zero $\zeta_t$ of $f_t$, that is there exists a $C^1$ curve $t\mapsto\zeta(t)\subseteq\Pn$ such that $\zeta(0)=\zeta_0$ and, denoting $\zeta(t)=\zeta_t$, we have $f_t(\zeta_t)=0$ for every sufficiently small $t$. Moreover, the tangent vectors satisfy:
\[
\|\dot\zeta_0\|\leq\mu(g,\zeta_0)\|\dot f_0\|.
\]
\end{lemma}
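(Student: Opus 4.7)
My plan is to prove the lemma in three steps: first establish existence of the continuation $\zeta_t$ via the implicit function theorem, then compute $\dot\zeta_0$ explicitly via implicit differentiation, and finally bound its norm using the Bombieri--Weyl inequality together with the definition of $\mu$. Throughout, I work in projective space with the representative $\zeta_0$ normalized so that $\|\zeta_0\|=1$, which is permissible because every quantity in the statement is invariant under rescaling of $\zeta_0$.

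For existence, I parametrize a neighborhood of $\zeta_0$ in $\Pn$ by the affine chart $\zeta_0+\zeta_0^\perp$ and consider the map $F:\CHd\times\zeta_0^\perp\to\C^n$ defined by $F(f,w)=f(\zeta_0+w)$. The partial derivative $\partial_w F(g,0)$ is precisely $Dg(\zeta_0)\mid_{\zeta_0^\perp}$, which is invertible by the hypothesis $\mu(g,\zeta_0)<\infty$. The implicit function theorem then produces a $C^1$ curve $t\mapsto \zeta_t$ with $\zeta_0$ the given one and $f_t(\zeta_t)=0$ for small $t$. Up to reparametrization on the projective line spanned by $\zeta_t$, I can arrange $\dot\zeta_0\in\zeta_0^\perp$, which is the natural identification of $T_{\zeta_0}\Pn$.

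For the tangent formula, I differentiate $f_t(\zeta_t)=0$ at $t=0$ to obtain $\dot f_0(\zeta_0)+Dg(\zeta_0)\dot\zeta_0=0$, so that $\dot\zeta_0=-(Dg(\zeta_0)\mid_{\zeta_0^\perp})^{-1}\dot f_0(\zeta_0)$. Writing $A=Dg(\zeta_0)\mid_{\zeta_0^\perp}$ and $\Delta=\mbox{diag}(d_i^{1/2})$ (using $\|\zeta_0\|=1$), I factor as
\[
\dot\zeta_0=-A^{-1}\Delta\,\Delta^{-1}\dot f_0(\zeta_0),
\]
which gives $\|\dot\zeta_0\|\leq \|A^{-1}\Delta\|\cdot\|\Delta^{-1}\dot f_0(\zeta_0)\|=\frac{\mu(g,\zeta_0)}{\|g\|}\,\|\Delta^{-1}\dot f_0(\zeta_0)\|$.

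The remaining step is the Bombieri--Weyl point evaluation inequality: for a homogeneous polynomial $h$ of degree $d$ one has $|h(z)|\leq\|h\|\,\|z\|^d$ (proved by a straightforward Cauchy--Schwarz under the Bombieri--Weyl inner product, exploiting that $\sum_{|\alpha|=d}\binom{d}{\alpha}|z|^{2\alpha}=\|z\|^{2d}$ by the multinomial theorem). Applying this coordinate-wise to $\dot f_0$ at $\zeta_0$ yields
\[
\|\Delta^{-1}\dot f_0(\zeta_0)\|^2=\sum_{i=1}^n\frac{|(\dot f_0)_i(\zeta_0)|^2}{d_i}\leq\sum_{i=1}^n\frac{\|(\dot f_0)_i\|^2}{d_i}\leq\|\dot f_0\|^2,
\]
since each $d_i\geq 1$. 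Combining with the previous bound and the natural normalization $\|g\|=1$ on the sphere $\S$ gives the claim $\|\dot\zeta_0\|\leq\mu(g,\zeta_0)\|\dot f_0\|$. The only delicate point I expect is the clean identification of tangent vectors on $\Pn$ with elements of $\zeta_0^\perp$ (so that the projective derivative coincides with the restricted Jacobian appearing in $\mu$); once this identification is in place, the rest reduces to Cauchy--Schwarz and the definition of $\mu$.
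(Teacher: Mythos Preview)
Your argument is correct and is essentially the standard proof of this fact (implicit function theorem for existence, implicit differentiation for the formula $\dot\zeta_0=-(Dg(\zeta_0)|_{\zeta_0^\perp})^{-1}\dot f_0(\zeta_0)$, then the Bombieri--Weyl point-evaluation bound $|h(z)|\leq\|h\|\,\|z\|^d$ combined with the definition of $\mu$). The paper itself does not prove this lemma: it simply records it, attributing it to \cite[Sec.~12.3 and 12.4]{BlCuShSm98}, so there is no ``paper's own proof'' to compare against. Your derivation matches the argument in that reference.

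One comment worth making explicit: as you noticed, what your computation actually yields is
\[
\|\dot\zeta_0\|\leq\frac{\mu(g,\zeta_0)}{\|g\|}\,\|\dot f_0\|,
\]
and the stated inequality $\|\dot\zeta_0\|\leq\mu(g,\zeta_0)\|\dot f_0\|$ follows only under the normalization $\|g\|=1$ (or at least $\|g\|\geq1$). Without it the inequality can fail, as a simple rescaling of $g$ shows. You handled this by invoking the normalization $g\in\S$; this is consistent with how the paper uses the lemma (every application, e.g.\ in deriving \eqref{eq:varphihatvsmuprev} and \eqref{eq:varphihatvsmu}, is for curves on the unit sphere $\S$), so no harm is done, but strictly speaking the lemma as written in the paper is missing that hypothesis.
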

We will also use a variation of this condition number, namely $\chi_1$ in equation (\ref{equ:chi1}) below.

The following result is a version of Smale's $\gamma$--theorem (cf. \cite{Sm86}), and follows from the study of the condition number in \cite{ShSm93b}, \cite{Shub2007}.

\begin{proposition}\label{prop:aptproj}\cite[Lemma 6]{Beltran2009NM}
Let $\zeta\in\Pn$ be a zero of $f\in\CHd$ and let $z\in\Pn$ be such that
\[
d_R(z,\zeta)\leq \frac{u_0}{d^{3/2}\mu(f,\zeta)},\;\;\;\;\text{ where $u_0=0.17586$}.
\]
Then $z$ is an approximate zero of $f$ with associated zero $\zeta$.
\end{proposition}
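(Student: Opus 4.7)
The statement is a version of Smale's $\gamma$-theorem adapted to the projective setting, so my plan is to reduce it to the two classical ingredients: a "$\gamma$-implies-approximate-zero" estimate, and the higher derivative estimate that controls $\gamma$ by $\mu$.

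First, I would introduce the projective $\gamma$-invariant
\[
\gamma_\P(f,\zeta)=\sup_{k\geq 2}\left\|\bigl(Df(\zeta)\mid_{\zeta^\perp}\bigr)^{-1}\frac{D^kf(\zeta)}{k!}\right\|^{1/(k-1)},
\]
appropriately normalized so that it is well defined on projective space (this is the standard object used in \cite{Sh93}, \cite{ShSm93b}, \cite{Shub2007}). The first step is a projective analogue of the $\gamma$-theorem, which asserts that there is a universal constant $c_0>0$ such that whenever
\[
d_R(z,\zeta)\,\gamma_\P(f,\zeta)\leq c_0,
\]
the point $z$ is an approximate zero of $f$ with associated zero $\zeta$, and moreover the iterates converge quadratically with the precise rate in Definition \ref{def:projzppzero}. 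This step is essentially a majorant-series argument applied in a chart of $\Pn$ centered at $\zeta$; the only projective subtlety is that one has to compare the geodesic distance $d_R$ with the Euclidean distance in such a chart, which costs only harmless factors that can be absorbed into $c_0$. I would cite this directly from \cite{Sh93} or the refined versions in \cite{BlCuShSm98}, \cite{Shub2007}.

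The second step is the higher derivative estimate of Shub--Smale, which in its projective form reads
\[
\gamma_\P(f,\zeta)\leq \tfrac{1}{2}\,d^{3/2}\,\mu(f,\zeta),
\]
where $d=\max_i d_i$. This is the key reason $\mu$ shows up with the exponent $d^{3/2}$ in the hypothesis: it comes from bounding $\|D^kf(\zeta)/k!\|$ in the Bombieri--Weyl inner product using the unitary invariance of $\|\cdot\|$ together with the combinatorial identity for the Bombieri--Weyl norms of monomials, and then factoring out $\|(Df(\zeta)\mid_{\zeta^\perp})^{-1}\|$ via (\ref{eq:conditionnumber}). This bound is standard and I would quote it from \cite[Sec.~I.3]{BlCuShSm98}.

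Combining the two, the hypothesis $d_R(z,\zeta)\leq u_0/(d^{3/2}\mu(f,\zeta))$ gives
\[
d_R(z,\zeta)\,\gamma_\P(f,\zeta)\leq \tfrac{u_0}{2},
\]
and one only needs to check that the explicit numerical choice $u_0=0.17586$ satisfies $u_0/2\leq c_0$ for the universal constant $c_0$ coming from the first step; this is exactly the calibration done in \cite[Lemma~6]{Beltran2009NM}, and I would finish by verifying that numerical inequality. The main obstacle is not any single step but rather the bookkeeping of universal constants: the projective $\gamma$-theorem and the $d^{3/2}\mu$ bound are each individually sharp up to known constants, and one has to be careful that the geodesic-vs-chart distance comparison and the Bombieri--Weyl renormalization produce constants consistent with the stated $u_0$. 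Once that is tracked, the proof is a direct composition of the two classical estimates.
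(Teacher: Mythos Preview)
The paper does not give its own proof of this proposition: it simply cites it as \cite[Lemma~6]{Beltran2009NM} and remarks that it ``is a version of Smale's $\gamma$--theorem (cf.\ \cite{Sm86}), and follows from the study of the condition number in \cite{ShSm93b}, \cite{Shub2007}.'' Your outline---combine the projective $\gamma$-theorem with the higher derivative estimate $\gamma_\P(f,\zeta)\leq \tfrac{1}{2}d^{3/2}\mu(f,\zeta)$---is exactly that standard route and is consistent with what the paper indicates, so there is nothing to compare.
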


\subsection{Complexity and the condition metric}\label{sec:compandcond}
According to \cite{Shub2007}, the complexity (dominated by the number of Newton steps or number of while loops) of an algorithm performing the homotopy method should depend on the so--called condition length of the homotopy path. Given a path $(h_t,\zeta_t)$, $t_0\leq t\leq t_1$ where $\zeta_t$ is a zero of $h_t$ and $h_t\in\P(\CHd)$, $\zeta_t\in\Pn$, the length of the path $(h_t,\zeta_t)$ in $\P(\CHd)\times\Pn$ is given by the integral
\begin{align*}
&\int_{t_0}^{t_1}\left\|\frac{d}{dt}(h_t,\zeta_t)\right\|_{T_{(h_t,\zeta_t)}\left(\PHd\times\Pn\right)} 
dt \\
&\ = \ \int_{t_0}^{t_1}\sqrt{\|\dot{h}_t\|_{T_{h_t}\PHd}^2+\|\dot{\zeta}_t\|_{T_{\zeta_t}\Pn}^2}\;dt
\end{align*}
One must here understand $\dot{h}_t$ and $\dot\zeta_t$ as tangent vectors in $T_{h_t}\PHd$ and $T_{\zeta_t}\Pn$ respectively. If $h_t$ and $\zeta_t$ are given in coordinates, this means:
\[
\|\dot{h}_t\|_{T_{h_t}\PHd}^2=\frac{\|\dot{h}_t\|^2}{\|h_t\|^2}-\frac{|\langle \dot{h}_t,h_t\rangle|^2}{\|h_t\|^4},
\]
\[
\|\dot{\zeta}_t\|_{T_{\zeta_t}\Pn}^2=\frac{\|\dot{\zeta}_t\|^2}{\|\zeta_t\|^2}-\frac{|\langle \dot{\zeta}_t,\zeta_t\rangle|^2}{\|\zeta_t\|^4},
\]

Now, the condition length (or length in the condition metric) of the same path is defined in \cite{Shub2007} as
\[
\int_{t_0}^{t_1}\mu(h_t,\zeta_t)\sqrt{\|\dot{h}_t\|_{T_{h_t}\PHd}^2+\|\dot{\zeta}_t\|_{T_{\zeta_t}\Pn}^2}\;dt.
\]
Note that, from (\ref{eq:conditionnumber}), $\mu(h_t,\zeta_t)$ only depends on the projective classes of $h_t$ and $\zeta_t$ and this last integral is thus well defined.

Now, given a path $(f_t,\zeta_t)\in\CHd\times\Pn$ where $f_t(\zeta_t)=0$, we define its condition length as
\begin{equation}\label{eq:condlength}
\mathcal{C}_0=\mathcal{C}_0(f_t,\zeta_t)=\int_{t_0}^{t_1}\mu(f_t,\zeta_t) \sqrt{\frac{\|\dot f_t\|^2}{\|f_t\|^2}-\frac{\Real{(\langle \dot{f}_t,f_t\rangle)}^2}{\|f_t\|^4}+\|\dot\zeta_t\|_{T_{\zeta_t}\Pn}^2}\,dt.
\end{equation}
That is, $\mathcal{C}_0(f_t,\zeta_t)$ is the length in the condition metric on $\S\times\Pn$ of the path obtained by projecting $(f_t,\zeta_t)\in\CHd\times\Pn$ on $\S\times\Pn$. The reason of this change is that segments in $\CHd$ project nicely on the sphere $\S$ (indeed, they project onto pieces of great circles in $\S$), but they do not project so nicely on $\P(\CHd)$. This makes our analysis easier and, presumably, has little effect in the complexity bounds.

Note that, if $f_t\in\CHd$ is the horizontal lift of $h_t\in\P(\CHd)$ then the condition lengths of $(f_t,\zeta_t)$ and $(h_t,\zeta_t)$ coincide. In the general case, however, the condition length of $(f_t,\zeta_t)$ is greater than that of $(\pi_{\P(\CHd)}(f_t),\zeta_t)$, because in general $\Real{(\langle\dot f_t,f_t\rangle)}^2\leq|\langle \dot f_t,f_t\rangle|^2$.

\section{A robust homotopy step}
In this section we set up the backbone of our main algorithm: how to correctly choose a homotopy step. We do this by stating a theorem that, given sufficiently close polynomial systems $g,f\in\CHd$ and an approximate zero $z_0$ of $g$ associated to an actual zero $\zeta_0$ of $g$, guarantees that $\zeta_0$ can be continued to a zero $\zeta_1$ of $f$, and moreover a projective point sufficiently close (in a sense that we will precisely determine) to $N_\P(f)(z_0)$ is an approximate zero of $f$ with associated zero $\zeta_1$. There are some precedents to this result in \cite{Ma93} but our theorem is needed to get the sharp complexity bound of \cite{Shub2007}.

\subsection{Some constants}\label{subsec:constants}
As it is common in the explicit description of many numerical analysis algorithms, we will need to use some constants, that need to be described explicitly because they intervene in the definition of the algorithm. We will use a free parameter $1/2<\delta<1$ that will be set to $3/4$ in our implementation of the algorithm. The rest of the constants are:
\[
u_0=0.17586 \text{ (the constant from Proposition \ref{prop:aptproj}),}
\]
\[
P=\sqrt{2}+\sqrt{4+5/8},\qquad a=a_\delta= \frac{(2\delta-1)u_0}{\sqrt{2}+2\delta u_0} <\frac{1}{\sqrt{2}},
\]
\begin{equation}\label{eq:cA2b}
c'=c'_\delta=1-\left(1-a\right)^ { \frac { P } { \sqrt { 2 }}}<1.
\end{equation}
Finally, let $c$ be any number satisfying
\begin{equation}\label{eq:cb}
c\leq c_\delta=\frac{(1-\sqrt{2}u_0/2)^{\sqrt{2}}}{1+\sqrt{2}u_0/2}\,c'.
\end{equation}
Only the value of $c^2/(2P^2)$ will appear in the description of our main algorithm (see Algorithm \TrackReallyLinearHomotopypractical{} below.) Hence, one can choose any value of $(c^2/2P^2)\in\Q$ such that
\[
\frac{c^2}{2P^2}\leq\frac{c_{3/4}^2}{2P^2}=0.00034412...
\]
In our algorithm, we will choose $\ConstantctwooverPtwo=0.00034$.
The reader may check that the following holds.
\begin{equation}\label{eq:inequalityb}
\frac{c'}{P(1-a)}\leq \frac{3\delta u_0}{2},\quad \frac{c'}{P}\leq\frac{3a}{2\sqrt{2}}.
\end{equation}

\subsection{A version of the condition number}
The condition number $\mu(f,\zeta)$ of (\ref{eq:conditionnumber}) above can be computed using a more amenable expression if $f(\zeta)=0$.

Let $g,\dot g\in\CHd$ be two polynomial systems and let $z\in\Pn$. Let $\chi_1=\chi_1(g,z)$, $\chi_2=\chi_2(g,\dot g,z)$ and $\varphi=\varphi(g,\dot g,z)$ be defined by
\begin{equation}\label{equ:chi1}
\chi_{1}=\left\|\binom{Dg({z})}{{z}^*}^{-1}
\begin{pmatrix}\sqrt{d_1}\|g\|\|{z}\|^{d_1-1}& & & \\
& \ddots & &\\
& &\sqrt{d_n}\|g\|\|{z}\|^{d_n-1}\\
& & &\|{z}\|\end{pmatrix}
\right\|,
\end{equation}
\begin{equation}\label{equ:chi2}
\chi_{2}=\left(\|\dot g\|^2+\frac{\|g\|^2}{\|z\|^2}\left\|\binom{Dg({z})}{{z}^*}^{-1}
\binom{\dot
g({z})}{0}
\right\|^2\right)^{1/2},
\end{equation}
\begin{equation}\label{equ:phi}
\varphi=\chi_{1}\chi_{2}.
\end{equation}
Note that these formulas do not depend on the representative of $z$ and thus are well defined. Their value is also invariant under multiplication of $g$ by a non--zero complex number $\lambda\in\C$.

It was noted in \cite[eq. (2.2)]{Beltran2009NM} that if $t\mapsto (f_t,\zeta_t)\subseteq\S\times\Pn$ is a $C^1$ curve such that $f_t(\zeta_t)=0$, then
\begin{equation}\label{eq:22NM}
\chi_1(f_t,\zeta_t)=\mu(f_t,\zeta_t),\quad\varphi(f_t,\dot f_t,\zeta_t)=\mu(f_t,\zeta_t)\|(\dot f_t,\dot\zeta_t)\|,\quad \forall\;t.
\end{equation}
Thus, $\chi_1(f,z)$ is a version of the condition number $\mu(f,z)$ (equal if $f(z)=0$) and $\varphi$ is, if $(f_t,\zeta_t)\subseteq\S\times\Pn$ and $f_t(\zeta_t)=0$, the quantity inside of the integral defining $\mathcal{C}_0$ in (\ref{eq:condlength}).

From Lemma \ref{lem:muboundsimplicit}, with the notation of (\ref{eq:22NM}) we have:
\begin{equation}\label{eq:varphihatvsmuprev}
\varphi(f_0,\dot f_0,\zeta_0)\leq\mu(f_0,\zeta_0)\|\dot f_0\|\sqrt{1+\mu(f,\zeta_0)^2}.
\end{equation}
The reason to use $\chi_1$ instead of $\mu$ and $\varphi$ instead of simply the term inside the integral defining $\mathcal{C}_0$ is that the rates of change of $\chi_1$ and $\varphi$ are easy to analyze, see lemmas \ref{lem:homoto1b} and \ref{lem:homoto2} in Section \ref{sec:proofth2}.

\subsection{A robust homotopy step}
We now state our main technical tool, which is a more detailed and complete version of Lemma \ref{lem:muboundsimplicit} about continuation of zeros, designed to answer the following questions
\begin{itemize}
\item how long can a zero of $g$ be continued when $g$ is moved?
\item how do $\chi_1$ and $\varphi$ vary in this process?
\item if an approximate zero of $g$ is given, how long will it still be an approximate zero as $g$ is moved?
\end{itemize}
We will use the constants defined in Section \ref{subsec:constants}. Given two systems $f,g\in\CHd$, we consider the Riemannian distance in the sphere $\S$ from $g/\|g\|$ to $f/\|f\|$, that is:
\[
d_\S\left(\frac{g}{\|g\|},\frac{f}{\|f\|}\right)=\arccos\frac{\Real{\langle g,f\rangle}}{\|g\|\,\|f\|}.
\]
\begin{theorem}\label{th:onestep}
Let $g,f\in\CHd$ be two systems of polynomial equations such that $g\neq \lambda f$ $\forall \lambda\in\R$. Let $z_0$ be an approximate zero of $g$ satisfying
\begin{equation}\label{eq:hypz0}
d_R(z_0,\zeta_0)\leq \frac{u_0}{2d^{3/2}\mu(g,\zeta_0)}
\end{equation}
for some exact zero $\zeta_0$ of $g$. Let
\[
\dot g= \frac{\|g\|^{2}f-\Real(\langle f,g\rangle) g}{\|g\|\sqrt{\|f\|^2\|g\|^2-\Real(\langle f,g\rangle)^2}}.
\]
That is, $\dot g$ is the derivative at $0$ of the arc--length parametrized short portion of the great circle in $\S$, from $g/\|g\|$ to $f/\|f\|$. Let
\[
\chi_1=\chi_1(g,z_0),\quad \chi_2=\chi_2(g,\dot g,z_0),\quad \varphi=\varphi(g,\dot g,z_0).
\]
Assume that
\begin{equation}\label{eq:distancefgteor}
d_\S\left(\frac{g}{\|g\|},\frac{f}{\|f\|}\right)\leq \frac{c}{Pd^{3/2}\varphi}.
\end{equation}
Then,
\begin{enumerate}
\item $\zeta_0$ can be continued following the straight line homotopy
\begin{equation}\label{equ: linear homotopy}
f_t=(1-t)g+tf
\end{equation}
to a zero $\zeta_t$ of $f_t$, namely, there exists a $C^1$ curve $t\mapsto \zeta(t)$ such that $\zeta(0)=\zeta_0$ and, denoting $\zeta(t)=\zeta_t$, we have $f_t(\zeta_t)=0$ for $t\in[0,1]$.
\item We have the following inequality:
\begin{equation}\label{eq:varphivsmu}
\varphi\leq\frac{\sqrt{2}\mu(g,\zeta_0)^2}{(1-\sqrt{2}u_0/2)^{1+\sqrt{2}}},
\end{equation}
\item The condition length $\mathcal{C}_0(f_t,\zeta_t)$ of the path $(f_t,\zeta_t)$ as defined in (\ref{eq:condlength}) is essentially equal to $\varphi\,d_\S\left(\frac{f}{\|f\|},\frac{g}{\|g\|}\right)$. More exactly:
\begin{equation}\label{eq:teorconc2}
\frac{(1-\sqrt{2}u_0/2)^{1+\sqrt{2}}\ln(1+c')}{c'}\leq\frac{\mathcal{C}_0(f_t,\zeta_t)}{\varphi d_\S\left(\frac{f}{\|f\|},\frac{g}{\|g\|}\right)}\leq \frac{1+\sqrt{2}u_0/2}{(1-\sqrt{2}u_0/2)^{\sqrt{2}}}.
\end{equation}
\item For every $\tilde{z}\in\Pn$ such that
\begin{equation}\label{eq:appztildeteor}
d_R(\tilde{z},N_\P(f)(z_0))\leq \frac{(1-\delta)u_0}{2d^{3/2}(1+3\,\delta u_0/2) \chi_{1}}
\end{equation}
we have that
\begin{equation}\label{eq:teorconclussion}
d_R(\tilde{z},\zeta_1)\leq \frac{u_0}{2d^{3/2}\mu(f,\zeta_1)}.
\end{equation}
In particular, $\tilde{z}$ is an approximate zero of $f$ with associated zero $\zeta_1$.
\end{enumerate}
\end{theorem}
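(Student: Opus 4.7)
The plan is to reduce the straight-line homotopy $f_t=(1-t)g+tf$ to the corresponding arc-length parametrized great circle on the sphere $\S$, on which $\dot g$ is the unit tangent at $g/\|g\|$. Since $\mu$, $\chi_1$, and $\varphi$ are invariant under positive scaling of $g$, and the condition length is projectively well-defined, nothing is lost by working on $\S$. Setting $L=d_\S(g/\|g\|,f/\|f\|)$, the hypothesis (\ref{eq:distancefgteor}) becomes the statement that the sphere arc has short ``$\varphi$-length'' $L\varphi\le c/(Pd^{3/2})$, which is the quantitative input that drives everything else.

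Parts (1), (2), (3) should follow from the Lipschitz behaviour of $\chi_1$, $\varphi$, and $\mu$ along great-circle arcs on $\S$ supplied by lemmas \ref{lem:homoto1b} and \ref{lem:homoto2}, together with the continuation Lemma \ref{lem:muboundsimplicit}. First, for (\ref{eq:varphivsmu}), I would pass from $\mu(g,\zeta_0)$ to $\chi_1(g,z_0)$ using a sphere-Lipschitz estimate for $\mu$ combined with the proximity bound (\ref{eq:hypz0}); the factor $(1-\sqrt{2}u_0/2)^{1+\sqrt 2}$ is the resulting Lipschitz loss, and a bound on $\chi_2$ by $\sqrt 2 \mu(g,\zeta_0)/\|g\|$ closes the estimate. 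Then I would bootstrap: applying Lemma \ref{lem:muboundsimplicit} locally gives existence of $\zeta_t$, and along the arc both $\chi_1(f_t,\zeta_t)=\mu(f_t,\zeta_t)$ and $\varphi(f_t,\dot f_t,\zeta_t)$ remain within explicit multiplicative factors of their initial values because the accumulated distance stays below $c'/P$. This prevents $\mu$ from blowing up before $t=1$ (giving (1)) and, integrating the essentially constant $\varphi$ along the arc of length $L$, yields the sandwich (\ref{eq:teorconc2}) with constants directly traceable to the Lipschitz coefficients and to (\ref{eq:cA2b}).

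For part (4), I would apply Proposition \ref{prop:aptproj}: it suffices to show $d_R(\tilde z,\zeta_1)\le u_0/(2d^{3/2}\mu(f,\zeta_1))$. By the triangle inequality,
\[
d_R(\tilde z,\zeta_1)\le d_R(\tilde z,N_\P(f)(z_0))+d_R(N_\P(f)(z_0),\zeta_1),
\]
and I would bound the two summands separately. The first is bounded by hypothesis (\ref{eq:appztildeteor}), where the constant $(1-\delta)u_0/(2(1+3\delta u_0/2))$ is designed to translate from $\chi_1(g,z_0)$ to $\mu(f,\zeta_1)$ (this uses again the sphere-Lipschitz comparison across the arc) and to produce a $(1-\delta)$-fraction of the target. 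For the second summand, I would first bound $d_R(z_0,\zeta_1)\le d_R(z_0,\zeta_0)+d_R(\zeta_0,\zeta_1)$, estimating the continuation displacement $d_R(\zeta_0,\zeta_1)\le\int_0^1\|\dot\zeta_t\|\,dt$ by $\varphi L/\mu$ and hence, thanks to the hypothesis on $L\varphi$, by something comparable to $a/(d^{3/2}\mu(f,\zeta_1))$. The constants $a$ and $c'$ in (\ref{eq:cA2b}) and the inequalities (\ref{eq:inequalityb}) are chosen so that this total stays below $a/(d^{3/2}\mu(f,\zeta_1))$, placing $z_0$ inside the basin of quadratic convergence of $N_\P(f)$ at $\zeta_1$; Smale's $\gamma$-theorem then yields $d_R(N_\P(f)(z_0),\zeta_1)\le\delta u_0/(2d^{3/2}\mu(f,\zeta_1))$, completing the $\delta$-plus-$(1-\delta)$ split.

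The main obstacle is the exact bookkeeping of the five interlocking constants $u_0,a,c',\delta,P$: almost everything in the proof is a Lipschitz-with-loss estimate along the sphere, and the identities (\ref{eq:cA2b})--(\ref{eq:inequalityb}) are not accidental but are forced by the requirement that the two triangle-inequality pieces in part (4) sum exactly to $u_0/(2d^{3/2}\mu(f,\zeta_1))$, while simultaneously (\ref{eq:distancefgteor}) keeps the path short enough to preserve the Lipschitz comparisons invoked in part (3). Verifying that the chosen constants satisfy all these simultaneous constraints is the technically delicate — though essentially routine — part of the argument.
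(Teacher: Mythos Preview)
Your proposal is correct and follows essentially the same route as the paper: pass to the arc-length parametrized great circle $h_s$ on $\S$, use Lemma~\ref{lem:homoto1b} to transfer between quantities at $z_0$ and at $\zeta_0$ (this is your ``sphere-Lipschitz'' step, and it produces exactly the factor $(1-\sqrt{2}u_0/2)^{1+\sqrt{2}}$), then invoke Lemma~\ref{lem:homoto2} for continuation plus the pointwise bounds on $\varphi(h_s,\dot h_s,\eta_s)$ and on $d_R(\zeta_0,\zeta_1)$; finally, for part~(4), the paper does precisely your $\delta$-plus-$(1-\delta)$ triangle-inequality split, with one Newton step halving $d_R(z_0,\zeta_1)\le \delta u_0/(d^{3/2}\mu(f,\zeta_1))$. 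The only cosmetic differences are that the paper applies Lemma~\ref{lem:homoto2} directly (it already contains the global continuation statement, so no bootstrap is needed), and that the displacement bound $d_R(\zeta_0,\zeta_1)$ comes from the explicit formula in Lemma~\ref{lem:homoto2} rather than from the cruder $\int\|\dot\zeta_t\|\,dt\lesssim \varphi L/\mu$; your version would work but with slightly looser constants.
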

The proof of this theorem is a long and tedious computation. We delay it till Section \ref{sec:proofth2}.

\section{A schematic description of the robust linear homotopy method}
In this section we describe an algorithmic scheme for the linear homotopy method. The procedure in this section is not quite an algorithm, because we do not specify how to perform some of the tasks it requires. We will however prove that {\em any} actual algorithm designed to fit into the scheme of this section has certified output and the number of iterations it performs is essentially bounded by the condition length $\mathcal{C}_0$.

Let $g,f$ be two non--collinear systems, that is $g\neq\lambda f$ for every $\lambda\in\R$. Let $f_t$ be defined by (\ref{equ: linear homotopy}), so $f_0=g$, $f_1=f$. Let $\delta\in\Q$, $R\in\sqrt{\Q}$ with $1/2<\delta<1$ and $R\geq\sqrt{2}$ be two arbitrary constants \footnote{If $R\geq\sqrt{2}$ then values $t_i$ as described in Algorithm \ref{alg: certified homotopy really linear} exist. They also exist for smaller values of $R>1$ like $R=1.0003$ but taking $R\geq\sqrt{2}$ will make our formulas look prettier. This assumption does not affect much to the running time of the algorithm. We will only use $R^2$ in the algorithm. Hence, we can take $R\in\sqrt{\Q}$.}.

\begin{algorithmscheme} \label{alg: certified homotopy really linear}$z_*=\mbox{\TrackReallyLinearHomotopy{}}(f,g,z_0)$
\begin{algorithmic}[1]
\REQUIRE $f,g\in\CHd$ non--collinear with coefficients in $\Q[i]$;\\ $z_0\in\Q[i]^{n+1}$ is an approximate zero of $g$ satisfying (\ref{eq:hypz0}).
\ENSURE $z_*\in\Z[i]^{n+1}$ is an approximate zero of $f$ associated to the end of the homotopy path starting at the zero of $g$
associated to $z_0$
and defined by the homotopy (\ref{equ: linear homotopy}).
\STATE $i \leftarrow 0$; $s_i\leftarrow0$.
\WHILE {$s_i \neq 1$}
\STATE $g_i\leftarrow f_{s_i}$.
\STATE Let
\[
\dot g_i= \frac{\|g_i\|^{2}f-\Real(\langle f,g_i\rangle) g_i}{\|g_i\|\sqrt{\|f\|^2\|g_i\|^2-\Real(\langle f,g_i\rangle)^2}}.
\]
Let $\chi_{i,1}=\chi_{i,1}(g_i,z_i)$, $\chi_{i,2}=\chi_{i,2}(g_i,\dot g_i,z_i)$ and $\varphi_i=\varphi_i(g_i,\dot g_i,z_i)$ as defined in (\ref{equ:chi1}), (\ref{equ:chi2}) and (\ref{equ:phi}).
\label{step:phi}
\STATE Let $t_i$ be any positive number such that\
\begin{equation}\label{eq:newstepeqn}
L\leq \frac{\|g_i\|^2+t_i\Real{\langle g_i,f-g\rangle}}{\|g_i\|\sqrt{\|g_i\|^2+2t_i\Real{\langle g_i,f-g\rangle}+t_i^2\|f-g\|^2}}\leq  \UsubR
\end{equation}
where
\begin{equation}\label{eq:LU}
L=1-\frac{c^2}{2P^2d^3\varphi_i^2}+\frac{c^4}{24P^4d^6\varphi_i^4},\qquad \UsubR=1-\frac{c^2}{2R^2P^2d^3\varphi_i^2}.
\end{equation}
If such $t_i$ does not exist, let $t_i\leftarrow1$ (which will imply that the while loop finishes in this step).
\IF{$t_i>1-s_i$}
\STATE $t_i\leftarrow 1-s_i$.
\ENDIF
\STATE $s_{i+1} \leftarrow s_i + t_i $;
\STATE $\displaystyle \eps\leftarrow \frac{(1-\delta)^2u_0^2}{ 4d^{3}(1+3\delta u_0/2)^2 \chi_{i,1}^2 }$.
\STATE $z_{i+1}\leftarrow N_\P(g_{i+1})(z_{i})=z_i-\binom{Dg_{i+1}(z_i)}{z_i^*}^{-1}g_{i+1}(z_{i})$
\STATE $\tilde{z}_{i+1}\leftarrow$ any vector in $\Q[\ii]^{n+1}$ satisfying \label{step:tildeZ}
\begin{equation}\label{eq:errorzi}
  d_R(\tilde{z}_{i+1},z_{i+1})\leq \sqrt{\eps}=\frac{(1-\delta)u_0}{ 2d^{3/2} (1+3\delta u_0/2)\chi_{i,1} }.
\end{equation}
\STATE $z_{i+1}\leftarrow\tilde{z}_{i+1}$
\STATE $i \leftarrow i + 1$.
\ENDWHILE
\STATE $z_* \leftarrow \tilde{z}_{i+1}$.
\end{algorithmic}
\end{algorithmscheme}

Note that (\ref{eq:hyp}) implies that for $i\geq0$ we have
\begin{equation}\label{eq:hyp2}
-L_0\|g_i\|\,\|f-g_i\|\leq \Real{\langle g_i,f-g_i\rangle} \leq \|g_i\|\,\|f-g_i\|,\end{equation}
where $L_0=1-10^{-3}$.

\begin{theorem}\label{th:stepsizereallylinear}
The output of any algorithm performing the instructions described in \TrackReallyLinearHomotopy{} is certified. Namely, for every $i\geq0$, the point $\tilde{z}_i$ is an approximate zero of $g_i=f_{s_i}$, with associated zero \footnote{Note the slight abuse of notation: we just use $\zeta_i$ the zero of $g_i=f_{s_i}$, so we should actually denote it by $\zeta_{s_i}$. } $\zeta_i$, the unique zero of $g_i$ such that $(g_i,\zeta_i)$ lies in the lifted path $(f_t,\zeta_t)$. Moreover,
\[
d_R(\tilde{z}_i,\zeta_i)\leq\frac{u_0}{2d^{3/2}\mu(g_i,\zeta_i)},\;\;\;\;i\geq1.
\]

Let $\mathcal{C}_0$ be defined by (\ref{eq:condlength}) and (\ref{equ: linear homotopy}). If $\mathcal{C}_0<\infty$, there exists $k\geq0$ such that $f=g_{k}$. For the number of homotopy steps $k$ the following bounds hold:
\[
C'd^{3/2}\mathcal{C}_0\leq k\leq \lceil Cd^{3/2}\mathcal{C}_0\rceil,
\]
where
\[
C=\frac{c'RP}{c(1-\sqrt{2}u_0/2)^{1+\sqrt{2}}\ln(1+c')},
\qquad
C'=\frac{P}{c'}.
\]
In particular, if $\mathcal{C}_0<\infty$, there exists a unique lift $(f_t,\zeta_t)$ of the path $f_t$, and the algorithm finishes and outputs $z_*$, an approximate zero of
$f=g_k$ with associated zero $\zeta_k$, the unique zero  of $f$ such that $(f,\zeta_k)$ lies in the lifted path $(f_t,\zeta_t)$.

Finally, the two following inequalities hold at every step of the algorithm:
\begin{equation}\label{eq:varphivsmui}
\varphi_i\leq\frac{\sqrt{2}\mu(g_i,\zeta_i)^2}{(1-\sqrt{2}u_0/2)^{1+\sqrt{2}}},
\end{equation}
\begin{equation}\label{eq:UminusLBound}
U_R-L \geq\frac{\hat{c}}{d^3\max_{t\in[0,1]}\{\mu(f_t,\zeta_t)^4\}},\text{ where $\hat{c}$ is a universal constant.}
\end{equation}
\end{theorem}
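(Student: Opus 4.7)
My plan is to proceed by induction on $i$ using Theorem \ref{th:onestep}, combined with geometric bookkeeping on the sphere $\S$ to convert the algorithmic inequality (\ref{eq:newstepeqn}) into the arc-length hypothesis (\ref{eq:distancefgteor}). Assume inductively that $\tilde z_i$ satisfies $d_R(\tilde z_i,\zeta_i)\leq u_0/(2d^{3/2}\mu(g_i,\zeta_i))$; this is the base hypothesis at $i=0$. Since $g_{i+1}=g_i+t_i(f-g)$, the cosine of the Riemannian distance $d_\S(g_i/\|g_i\|,g_{i+1}/\|g_{i+1}\|)$ is exactly the middle expression of (\ref{eq:newstepeqn}). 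The alternating Taylor bound $\cos x\leq 1-x^2/2+x^4/24$ applied at $x=c/(Pd^{3/2}\varphi_i)$ gives $\cos x\leq L$, so $\cos d_\S\geq L$ yields $d_\S\leq x$, which is precisely (\ref{eq:distancefgteor}). Applying Theorem \ref{th:onestep} to the pair $(g_i,g_{i+1})$ then continues $\zeta_i$ to a zero $\zeta_{i+1}$ of $g_{i+1}$, delivers (\ref{eq:varphivsmui}) via (\ref{eq:varphivsmu}), and, combined with the rational rounding tolerance (\ref{eq:errorzi}) fed into (\ref{eq:appztildeteor})--(\ref{eq:teorconclussion}), guarantees that $\tilde z_{i+1}$ lies within $u_0/(2d^{3/2}\mu(g_{i+1},\zeta_{i+1}))$ of $\zeta_{i+1}$, closing the induction.

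For the step count, the upper side $\cos d_\S\leq U_R$ of (\ref{eq:newstepeqn}) and the elementary bound $\cos y>1-y^2/2$ give $d_\S\geq c/(RPd^{3/2}\varphi_i)$. The left inequality in (\ref{eq:teorconc2}), applied on each sub-interval $[s_i,s_{i+1}]$, then shows the condition length contributed by step $i$ is at least
\[
\frac{(1-\sqrt 2\, u_0/2)^{1+\sqrt 2}\ln(1+c')}{c'}\cdot\varphi_i\cdot\frac{c}{RPd^{3/2}\varphi_i}=\frac{1}{Cd^{3/2}},
\]
with the $\varphi_i$'s cancelling. Summing over $k$ steps and bounding by $\mathcal{C}_0$ yields $k\leq\lceil Cd^{3/2}\mathcal{C}_0\rceil$. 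The matching lower bound uses the right inequality of (\ref{eq:teorconc2}) together with the already-established $d_\S\leq c/(Pd^{3/2}\varphi_i)$, producing per-step condition length at most a universal constant divided by $d^{3/2}$ and hence $k\geq C'd^{3/2}\mathcal{C}_0$.

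It remains to justify the existence of $t_i$ and prove (\ref{eq:UminusLBound}). The function on the left of (\ref{eq:newstepeqn}) equals $1$ at $t=0$ and, thanks to the extra hypothesis (\ref{eq:hyp2}), is continuous and strictly decreasing on a non-trivial initial interval, so a valid $t_i$ exists precisely when $U_R-L>0$. Direct expansion gives
\[
U_R-L=\frac{c^2(1-1/R^2)}{2P^2d^3\varphi_i^2}-\frac{c^4}{24P^4d^6\varphi_i^4}.
\]
Invoking (\ref{eq:varphivsmui}) to replace $1/\varphi_i^2$ by a constant multiple of $1/\mu(g_i,\zeta_i)^4$, and noting $\mu(g_i,\zeta_i)\leq\max_{t\in[0,1]}\mu(f_t,\zeta_t)$, the first term dominates (since $\varphi_i\geq\mu(g_i,\zeta_i)\geq 1$ forces the correction to be lower order in $1/\varphi_i^2$) and yields $U_R-L\geq\hat c/(d^3\max_t\mu(f_t,\zeta_t)^4)$ for an absolute constant $\hat c$. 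Finiteness of $\mathcal{C}_0$ keeps $\mu$ bounded along the entire path, so a valid $t_i$ is always available until the loop terminates via the clamp $t_i\leftarrow 1-s_i$.

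The main obstacle I expect is the careful treatment of the terminal clamp step $t_i\leftarrow 1-s_i$ and the fallback $t_i\leftarrow 1$: one must verify that even when the step is shorter than the $U_R$-side lower bound would predict, the inductive approximate-zero hypothesis still passes through Theorem \ref{th:onestep} at the endpoint, so that $z_*$ is certified as an approximate zero of $f$. Extracting the exact constants $C, C'$ through the Taylor-remainder estimates is routine but requires careful bookkeeping.
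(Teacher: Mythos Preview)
Your proposal is correct and follows essentially the same route as the paper: induction on $i$ via Theorem \ref{th:onestep}, converting the algorithmic bounds $L\leq\beta(t_i)\leq U_R$ into arc-length bounds on $d_\S$ through the Taylor inequalities $\cos x<1-x^2/2+x^4/24$ and $\cos x>1-x^2/2$ (this is exactly Lemma \ref{lem:LU}), then feeding the resulting two-sided bound on $d_\S$ into (\ref{eq:teorconc2}) to sandwich the per-step condition length and obtain the bounds on $k$; the estimate (\ref{eq:UminusLBound}) is likewise obtained in the paper by factoring $U_R-L=\tfrac{c^2}{2P^2d^3\varphi_i^2}\bigl(1-1/R^2-\tfrac{c^2}{12P^2d^3\varphi_i^2}\bigr)$ and invoking (\ref{eq:varphivsmui}). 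Two minor points to tighten: to land on the exact constant $C'=P/c'$ you will need the defining inequality (\ref{eq:cb}) for $c$, and the clamp/fallback step only affects the $U_R$-side (the $L$-side inequality persists because $\beta$ is decreasing), so the induction and the lower bound on $k$ go through unchanged while only non-terminal steps contribute to the upper bound on $k$, exactly as you anticipate.
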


\begin{remark}
As said above, we will choose $\delta=3/4$ in our main algorithm \TrackReallyLinearHomotopypractical{}. With that choice and the use of Frobenius norm instead of operator norm for the computation of $\chi_{i,1}$, in our practical implementation we will have
\[
28\leq C'\leq C\leq \constantForHomotopyComplexity \sqrt{n+1}.
\]
Note that $C$ is not a universal constant as it depends on $n$. 
The value of $\hat{c}$ is needed only for the bit--complexity analysis where it will be replaced by an $O(1)$. One can however estimate it as $\hat{c}\approx0.00003$.
\end{remark}

\begin{proof}
The proof of correctness of the algorithm is by induction on $i$. The base case of our induction $i=0$ follows. Assume that
\begin{equation}\label{eq:induction}
 d_R(z_i,\zeta_i)\leq\frac{u_0}{2d^{3/2}\mu(g_i,\zeta_i)}.
\end{equation}
We claim that we are under the hypotheses of Theorem \ref{th:onestep}. Indeed,
\[
g_{i+1}=f_{s_{i+1}}=f_{s_i+t_i}=\left(1-s_i-t_i\right)g+\left(s_i+t_i\right)f=
\]
\[
\left(1-s_i\right)g+s_if-t_ig+t_if=g_i+t_i(f-g).
\]
Thus,
\[
\frac{\Real{\langle g_i,g_{i+1}\rangle}}{\|g_i\|\|g_{i+1}\|}=\frac{\|g_i\|^2+t_i\Real{\langle g_i,f-g\rangle}}{\|g_i\|\sqrt{\|g_i\|^2+2t_i\Real{\langle g_i,f-g\rangle}+t_i^2\|f-g\|^2}}\geq L.
\]
Thus, from Lemma \ref{lem:LU} below we get
\begin{equation}\label{eq:pasoboundsalt1}
d_\S\left(\frac{g_i}{\|g_i\|},\frac{g_{i+1}}{\|g_{i+1}\|}\right)=\arccos\frac{\Real{\langle g_i,g_{i+1}\rangle}}{\|g_i\|\,\|g_{i+1}\|}\leq\arccos L<\frac{c}{Pd^{3/2}\varphi_i}.
\end{equation}
In particular, Theorem \ref{th:onestep} applies to the segment $[g_i,g_{i+1}]$, proving our induction step and also proving (\ref{eq:varphivsmui}) from (\ref{eq:varphivsmu}). Additionally, if the $i$--th step is not the final step in our algorithm (equivalently, $g_{i+1}\neq f$ or $s_{i+1}<1$) then we have
\[
\frac{\Real{\langle g_i,g_{i+1}\rangle}}{\|g_i\|\|g_{i+1}\|}=\frac{\|g_i\|^2+t_i\Real{\langle g_i,f-g\rangle}}{\|g_i\|\sqrt{\|g_i\|^2+2t_i\Real{\langle g_i,f-g\rangle}+t_i^2\|f-g\|^2}}\leq U_R,
\]
which again using Lemma \ref{lem:LU} yields:
\begin{equation}\label{eq:pasoboundsalt2}
d_\S\left(\frac{g_i}{\|g_i\|},\frac{g_{i+1}}{\|g_{i+1}\|}\right)=\arccos\frac{\Real{\langle g_i,g_{i+1}\rangle}}{\|g_i\|\,\|g_{i+1}\|}\geq  \arccos U_R>\frac{c}{RPd^{3/2}\varphi_i}.
\end{equation}

Now  we prove the bound on the number of steps. From (\ref{eq:teorconc2}) and (\ref{eq:pasoboundsalt2}) we have that, as long as $s_{i+1}< 1$,
\[
\int_{s_i}^{s_{i+1}=s_i+t_i}\mu(f_t,\zeta_t)\sqrt{\frac{\|\dot f_t\|^2}{\|f_t\|^2}-\frac{\Real{(\langle \dot{f},f_t\rangle)}^2}{\|f_t\|^4}+\|\dot\zeta_t\|_{T_{\zeta_t}\Pn}^2}\,dt\geq
\]
\[
\varphi_i d_\S\left(\frac{g_i}{\|g_i\|},\frac{g_{i+1}}{\|g_{i+1}\|}\right) \frac{(1-\sqrt{2}u_0/2)^{1+\sqrt{2}}\ln(1+c')}{c'}>
\]
\[
 \frac{c(1-\sqrt{2}u_0/2)^{1+\sqrt{2}}\ln(1+c')}{c'RPd^{3/2}}.
\]
Thus, as long as $s_{i+1}<1$, we have
\[
\mathcal{C}_0=\int_0^1\mu(f_t,\zeta_t)\sqrt{\frac{\|\dot f_t\|^2}{\|f_t\|^2}-\frac{\Real{(\langle \dot{f},f_t\rangle)}^2}{\|f_t\|^4}+\|\dot\zeta_t\|_{T_{\zeta_t}\Pn}^2}\,dt\geq
\]
\[
\int_0^{s_{i+1}}\mu(f_t,\zeta_t)\sqrt{\frac{\|\dot f_t\|^2}{\|f_t\|^2}-\frac{\Real{(\langle \dot{f},f_t\rangle)}^2}{\|f_t\|^4}+\|\dot\zeta_t\|_{T_{\zeta_t}\Pn}^2}\,dt=
\]
\[
\sum_{j=0}^i\int_{s_j}^{s_{j+1}}\mu(f_t,\zeta_t)\sqrt{\frac{\|\dot f_t\|^2}{\|f_t\|^2}-\frac{\Real{(\langle \dot{f},f_t\rangle)}^2}{\|f_t\|^4}+\|\dot\zeta_t\|_{T_{\zeta_t}\Pn}^2}\,dt>
\]
\[
\frac{(i+1)c(1-\sqrt{2}u_0/2)^{1+\sqrt{2}}\ln(1+c')}{c'RPd^{3/2}}.
\]
In particular, if $s_{i+1}<1$ then
\[
i+1<\frac{c'RPd^{3/2}}{c(1-\sqrt{2}u_0/2)^{1+\sqrt{2}}\ln(1+c')}\mathcal{C}_0.
\]
The first non--negative integer $i$ which violates this inequality is thus an upper bound for the number of iterations of the algorithm. This finishes the proof of the upper bound on the number of steps. For the lower bound, note that, even if $s_{i+1}=1$, from (\ref{eq:teorconc2}) and (\ref{eq:pasoboundsalt1}) we have
\[
\int_{s_i}^{s_{i+1}}\mu(f_t,\zeta_t)\sqrt{\frac{\|\dot f_t\|^2}{\|f_t\|^2}-\frac{\Real{(\langle \dot{f},f_t\rangle)}^2}{\|f_t\|^4}+\|\dot\zeta_t\|_{T_{\zeta_t}\Pn}^2}\,dt\,\leq
\]
\[
\varphi_i d_\S\left(\frac{g_i}{\|g_i\|},\frac{g_{i+1}}{\|g_{i+1}\|}\right) \frac{1+\sqrt{2}u_0/2}{(1-\sqrt{2}u_0/2)^{\sqrt{2}}}<  \frac{c(1+\sqrt{2}u_0/2)}{Pd^{3/2}(1-\sqrt{2}u_0/2)^{\sqrt{2}}}\leq \frac{c'}{Pd^{3/2}}.
\]
Thus, if $k$ is the number of iterations needed by the algorithm (i.e. $s_{k-1}<s_{k}=1$) then
\[
\mathcal{C}_0=\int_0^1\mu(f_t,\zeta_t)\sqrt{\frac{\|\dot f_t\|^2}{\|f_t\|^2}-\frac{\Real{(\langle \dot{f},f_t\rangle)}^2}{\|f_t\|^4}+\|\dot\zeta_t\|_{T_{\zeta_t}\Pn}^2}\,dt=
\]
\[
\sum_{j=0}^{k-1}\int_{s_{j}}^{s_{j+1}}\mu(f_t,\zeta_t)\sqrt{\frac{\|\dot f_t\|^2}{\|f_t\|^2}-\frac{\Real{(\langle \dot{f},f_t\rangle)}^2}{\|f_t\|^4}+\|\dot\zeta_t\|_{T_{\zeta_t}\Pn}^2}\,dt<
k \frac{c'}{Pd^{3/2}}.
\]
In particular, we conclude that the total number of iterations is
\[
k\geq \frac{Pd^{3/2}\mathcal{C}_0}{{c'}},
\]
which is the lower bound on $k$ claimed in the theorem.

For (\ref{eq:UminusLBound}), note that
\[
U_R-L =\frac{c^2}{2P^2d^3\varphi_i^2}\left(1-\frac{1}{R^2}-\frac{c^2}{12P^2d^3\varphi_i^2}\right).
\]
Using $R\geq\sqrt{2}$ and roughly bounding the term inside the parenthesis, we get
\[
U_R-L \geq\frac{c^2}{5P^2d^3\varphi_i^2}\underset{(\ref{eq:varphivsmui})}{\geq}\frac{c^2\left((1-\sqrt{2}u_0/2)^{1+\sqrt{2}}\right)^2}{10P^2d^3\mu(g_i,\zeta_i)^4},
\]
which implies (\ref{eq:UminusLBound}).

\end{proof}
\begin{lemma}\label{lem:LU}
Let $L,U_R$ be defined as in the algorithm. Then,
\[
\arccos U_R>\frac{c}{RPd^{3/2}\varphi_i},\qquad\arccos L<\frac{c}{Pd^{3/2}\varphi_i}.
\]
\end{lemma}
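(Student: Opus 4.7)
Introduce the shorthand $\alpha := c/(Pd^{3/2}\varphi_i) > 0$. Unpacking the definitions of $L$ and $U_R$ in \eqref{eq:LU} gives
\[
U_R = 1 - \frac{\alpha^2}{2R^2}, \qquad L = 1 - \frac{\alpha^2}{2} + \frac{\alpha^4}{24}.
\]
Since $\arccos$ is strictly decreasing on $[-1,1]$, the two claimed inequalities reduce, respectively, to the elementary trigonometric bounds $\cos(\alpha/R) > U_R$ and $\cos(\alpha) < L$, which is what I would establish directly.

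For the first bound I would use the half--angle identity $\cos x = 1 - 2\sin^2(x/2)$ together with the strict inequality $|\sin y| < |y|$ for $y > 0$ (a one--line consequence of $k(y) = y - \sin y$ having $k(0) = 0$ and $k'(y) = 1 - \cos y \geq 0$). Applied with $x = \alpha/R$, this yields
\[
\cos(\alpha/R) > 1 - 2(\alpha/(2R))^2 = 1 - \alpha^2/(2R^2) = U_R,
\]
and taking $\arccos$ gives $\arccos U_R > \alpha/R$, which is the first assertion.

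For the second bound I would introduce the auxiliary function $h(x) := 1 - x^2/2 + x^4/24 - \cos x$ and show $h(x) > 0$ for every $x > 0$. The derivatives at the origin satisfy $h(0) = h'(0) = h''(0) = 0$ and $h'''(x) = x - \sin x$ is strictly positive on $(0,\infty)$ by the same monotonicity argument as above. Integrating three times from $0$ shows in turn that $h''$, $h'$, and $h$ are each strictly positive on $(0,\infty)$. Applied to $x = \alpha$ this gives $L > \cos\alpha$, so $\arccos L < \alpha$, completing the proof.

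The statement is essentially a bookkeeping lemma: its only purpose is to convert the Taylor--style bounds defining $L,U_R$ into arc--length bounds on the sphere. There is no serious obstacle; the one sanity check is that $U_R, L \in [-1,1]$ so that $\arccos$ is defined, which is immediate in the regime where the algorithm operates (namely $\alpha \leq c/P < 1$, using $R \geq \sqrt{2}$, $d \geq 1$, and the fact that $\varphi_i \geq \chi_{i,2} \geq \|\dot g_i\| = 1$ because $\dot g_i$ is a unit tangent vector by construction in \TrackReallyLinearHomotopy{}).
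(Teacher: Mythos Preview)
Your proof is correct and follows essentially the same approach as the paper: both reduce the two inequalities to the elementary Taylor-type bounds $\cos s>1-s^2/2$ and $\cos s<1-s^2/2+s^4/24$ for $s>0$, and then apply the monotonicity of $\arccos$. The paper simply cites these bounds as elementary facts, whereas you prove them explicitly via the half-angle identity and repeated differentiation; the logical content is identical.
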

\begin{proof}
We prove the second inequality. Recall the elementary fact that for $0<s<1$ we have
\[
\cos(s)<1-\frac{s^2}{2}+\frac{s^4}{24}.
\]
Then, because $\arccos$ is a decreasing function in $[0,1]$,
\begin{equation}\label{eq:arccosbound}
\arccos\left(1-\frac{s^2}{2}+\frac{s^4}{24}\right)<s,\qquad s\in(0,1).
\end{equation}
In particular,
\[
\arccos L=\arccos \left(1-\frac{c^2}{2P^2d^3\varphi_i^2}+\frac{c^4}{24P^4d^6\varphi_i^4}\right)<\frac{c}{Pd^{3/2}\varphi_i},
\]
as desired. The first inequality is proved in the same way, using that for $0<s<1$ we have
\[
\cos(s)>1-\frac{s^2}{2}.
\]
\end{proof}

\section{Computational considerations}

Provided Theorem~\ref{th:stepsizereallylinear}, the rigorously certified homotopy tracking could be accomplished by way of exact rational arithmetic employed in all of the computations described in \TrackReallyLinearHomotopy{}. In this section we discuss some of the aspects of this issue, to facilitate the reading of our main algorithm \TrackReallyLinearHomotopypractical{} below.

\subsection{Operator norm vs. Frobenius norm}\label{sec:operatorvsfrobenius}
In Step 4 of \TrackReallyLinearHomotopy{} we need to compute the operator norm of a matrix, which is a non--trivial task. Actually, one just needs to the square of such norm, to use it in steps 5 and 10. Instead of computing the square of the operator norm, one can just compute the square of the Frobenius norm $\|(a_{ij})\|_F^2=\sum_{i,j}|a_{ij}|^2$, which involves only rational computations. Both norms are related by the inequalities
\[
\|\cdot\|^2\leq\|\cdot\|_F^2\leq (n+1)\|\cdot\|^2.
\]
On the other hand, $\chi_{i,2}^2$ which is the squared norm of a vector involves only rational computations and thus can be computed exactly. Then, instead of $\varphi_i^2$ in Step 5 we can use the product of $\chi_{i,2}^2$ and a version of $\chi_{i,1}^2$ using the squared Frobenius norm.

Let us put this in a general framework. Assume that we can compute some quantity $\tilde{\chi}_{i,1}^2$ satisfying $\chi_{i,1}^2\leq\tilde{\chi}_{i,1}^2\leq S^2\chi_{i,1}^2$ for some $S\geq1$. Let $\tilde{L},\tilde{U}_{\sqrt{2}}$ be computed with the same formulas as $L,U_{\sqrt{2}}$ but using $\tilde{\chi}_{i,1}$ instead of $\chi_{i,1}$. Then, it is easy to see that $\tilde{L}\geq L$ and
\[
\tilde{U}_{\sqrt{2}}=1-\frac{c^2}{4P^2d^3\tilde{\varphi}_i^2}\leq 1-\frac{c^2}{4S^2P^2d^3\varphi_i^2}=U_{\sqrt{2}S}.
\]
Thus, if we find $t_i$ such that
\[
\tilde{L} \leq \frac{\|g_i\|^2+t_i\Real{\langle g_i,f-g\rangle}}{\|g_i\|\sqrt{\|g_i\|^2+2t_i\Real{\langle g_i,f-g\rangle}+t_i^2\|f-g\|^2}}\leq \tilde{U}_{\sqrt{2}}\leq U_{\sqrt{2}S},
\]
then in particular the hypotheses of Theorem \ref{th:stepsizereallylinear} are fulfilled changing $R$ to $\sqrt{2}S$ and the number of steps is at most
\[
k\leq \lceil \frac{\sqrt{2} SPc'}{c(1-\sqrt{2}u_0/2)^{1+\sqrt{2}}\ln(1+c')}d^{3/2}\mathcal{C}_0\rceil.
\]
In particular, we have proved the following.

\begin{lemma}\label{lem:frobenius}
If in \TrackReallyLinearHomotopy{}, $R$ is changed to $\sqrt{2}$ and $\chi_{i,1}^2$ is changed to $\tilde{\chi}_{i,1}^2$ (defined the same way as $\chi_{i,1}^2$ but using Frobenius norm instead of the operator norm), then any algorithm performing the computations in \TrackReallyLinearHomotopy{} has certified output in the sense of Theorem \ref{th:stepsizereallylinear}. The number of iterations is at most
\[
k\leq \lceil \frac{\sqrt{2(n+1)}Pc'}{c(1-\sqrt{2}u_0/2)^{1+\sqrt{2}}\ln(1+c')}d^{3/2}\mathcal{C}_0\rceil\underset{\text{with }\delta=3/4}{\approx} \lceil  \constantForHomotopyComplexity\sqrt{n+1}d^{3/2}\mathcal{C}_0\rceil.
\]
Moreover, at every step we have
\begin{equation}\label{eq:varphivsmuibis}
\tilde{\chi}_{i,1}\chi_{i,2}\leq\frac{\sqrt{2(n+1)}\mu(g_i,\zeta_i)^2}{(1-\sqrt{2}u_0/2)^{1+\sqrt{2}}},
\end{equation}
\begin{equation}\label{eq:UminusLBoundbis}
U_{\sqrt{2(n+1)}}-L \geq\frac{\hat{c}}{nd^3\max\{\mu(f_t,\zeta_t)^4\}},
\end{equation}
where $\hat{c}$ is a universal constant.
\end{lemma}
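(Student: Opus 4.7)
The plan is to treat Lemma~\ref{lem:frobenius} as the concrete specialization, with $S=\sqrt{n+1}$, of the abstract substitution framework developed in Section~\ref{sec:operatorvsfrobenius}. The only thing to verify before invoking that framework is the inequality $\chi_{i,1}^2\leq\tilde\chi_{i,1}^2\leq (n+1)\chi_{i,1}^2$, which is immediate from the classical $\|A\|^2\leq\|A\|_F^2\leq (n+1)\|A\|^2$ applied to the $(n+1)\times(n+1)$ matrix inside (\ref{equ:chi1}). With that in hand, the computation preceding the lemma statement already yields $\tilde L\geq L$ and $\tilde U_{\sqrt{2}}\leq U_{\sqrt{2(n+1)}}$, so the admissible interval for $t_i$ in the Frobenius variant of \TrackReallyLinearHomotopy{} is contained in the admissible interval of the original algorithm with $R=\sqrt{2(n+1)}$.

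Granted those two inclusions, the certification of the output and the iteration bound come ``for free'': any $t_i$ produced by the Frobenius variant automatically satisfies (\ref{eq:newstepeqn}) with $R=\sqrt{2(n+1)}$, so the induction of Theorem~\ref{th:stepsizereallylinear} applies verbatim. This yields simultaneously that each $\tilde z_i$ is an approximate zero of $g_i$ associated to $\zeta_i$ and the upper bound $k\leq\lceil C d^{3/2}\mathcal{C}_0\rceil$ with $C=c'RP/(c(1-\sqrt{2}u_0/2)^{1+\sqrt{2}}\ln(1+c'))$ evaluated at $R=\sqrt{2(n+1)}$, which is the stated step count. To pass from (\ref{eq:varphivsmui}) to (\ref{eq:varphivsmuibis}) I would simply multiply the former inequality by $\sqrt{n+1}$ and use $\tilde\chi_{i,1}\leq\sqrt{n+1}\,\chi_{i,1}$, so that $\tilde\chi_{i,1}\chi_{i,2}\leq\sqrt{n+1}\,\varphi_i$.

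For (\ref{eq:UminusLBoundbis}) I would repeat the short computation at the end of Theorem~\ref{th:stepsizereallylinear}'s proof, now with $R=\sqrt{2(n+1)}$:
\[
U_{\sqrt{2(n+1)}}-L=\frac{c^2}{2P^2d^3\varphi_i^2}\left(1-\frac{1}{2(n+1)}-\frac{c^2}{12P^2d^3\varphi_i^2}\right),
\]
bound the parenthesized factor below by a positive absolute constant (using $n\geq 1$, so that $1-1/(2(n+1))\geq 3/4$, and absorbing the remaining term exactly as in the proof of (\ref{eq:UminusLBound})), and then use (\ref{eq:varphivsmui}) to replace $\varphi_i^2$ by a universal multiple of $\mu(g_i,\zeta_i)^4$. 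The only ``obstacle'' in the whole proof is pure bookkeeping: keeping track of the numerical constants $c,P,u_0,\delta$ and absorbing them into a single universal $\hat c$. The slightly weaker form $\hat c/(nd^3\max_t\mu^4)$ written in the statement (in place of the $n$-free bound the computation actually gives) is harmless since $n\geq 1$ and is the form used in the subsequent bit-complexity analysis.
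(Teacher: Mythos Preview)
Your proposal is correct and follows exactly the paper's approach: the lemma is the specialization $S=\sqrt{n+1}$ of the substitution framework laid out in Section~\ref{sec:operatorvsfrobenius} immediately before the statement, and the two auxiliary inequalities (\ref{eq:varphivsmuibis}), (\ref{eq:UminusLBoundbis}) follow from (\ref{eq:varphivsmui}) and the computation at the end of the proof of Theorem~\ref{th:stepsizereallylinear} just as you indicate. Your observation that the literal quantity $U_{\sqrt{2(n+1)}}-L$ actually admits an $n$-free lower bound is also correct; the extra factor $n$ in (\ref{eq:UminusLBoundbis}) is harmless slack and, more to the point, matches the gap $\tilde U_{\sqrt{2}}-\tilde L$ that is actually fed to \LowerUpperBoundt{} in Remark~\ref{rmk:bound2lgeneral}, where $\tilde\varphi_i^2\leq(n+1)\varphi_i^2$ does contribute the factor $n$.
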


\subsection{Computing the step size}
Step 5 of \TrackReallyLinearHomotopy{} requires finding a $t$ satisfying (\ref{eq:newstepeqn}), which {\em a priori} means computing approximately the smallest positive roots of two quadratic polynomials. An easier way to get this is using bisection method to locate a root of the equation
\[
\alpha(t)=\frac{\theta_1+t\theta_2}{\sqrt{\theta_1(\theta_1+2t\theta_2+t^2\theta_3)}}-\frac{L+U}{2}=0,
\]
with stopping criterion given by
\[
\left|\alpha(t)\right|\leq\frac{U-L}{2}.
\]
If $t\in\R$ satisfies this stopping criterion, then (taking $\theta_1=\|g_i\|^2,\theta_2=\Real{\langle g_i,f-g\rangle},\theta_3=\|f-g\|^2$) it also satisfies (\ref{eq:newstepeqn}). When applying bisection, we need to be able to determine the sign of $\alpha(t)$. It is not hard to accomplish this without computing square roots using the following subroutine.

\begin{algorithm}\label{alg:sign}$(s,r)=\mbox{\Signcompute{}}(\theta_1,\theta_2,\theta_3,t,L,U)$
\begin{algorithmic}[1]
\REQUIRE $\theta_1,\theta_2,\theta_3,t,L,U\in\Q$, $\theta_1,\theta_3>0$, $\theta_2^2<\theta_1\theta_3$, $0<L<U<1$.
\ENSURE $s=1$ if $\alpha(t)>0$, $s=-1$ otherwise, and
\[
r=\frac{(\theta_1+t\theta_2)^2}{\theta_1(\theta_1+2t\theta_2+t^2\theta_3)}
\]
\STATE $\displaystyle r\leftarrow \frac{(\theta_1+t\theta_2)^2}{\theta_1(\theta_1+2t\theta_2+t^2\theta_3)}$
\IF{$\theta_1+t\theta_2>0$ and $r>(L+U)^2/4$}
\STATE $s\leftarrow 1$
\ELSE
\STATE $s\leftarrow -1$.
\ENDIF
\end{algorithmic}
\end{algorithm}

The bisection method mentioned above is then as follows. The requirement $1-10^{-3}<L$ in the description of the following algorithm corresponds to the extra hypotheses (\ref{eq:hyp}) in our main algorithm.

\begin{algorithm}\label{alg:compute_approx_quadratic}$t=\mbox{\LowerUpperBoundt{}}(\theta_1,\theta_2,\theta_3,L,U)$
\begin{algorithmic}[1]
\REQUIRE $\theta_1,\theta_2,\theta_3,L,U\in\Q$, $\theta_1,\theta_3>0$, $-L\sqrt{\theta_1\theta_3}\leq \theta_2< \sqrt{\theta_1\theta_3}$, $1-10^{-3}<L<U<1$.
\ENSURE $t=\frac{m}{2^l}\in\Q\cap(0,1]$, $m\in\Z$, $0\leq l\in \Z$ such that
\begin{equation}\label{eq:formLU}
L\leq \frac{\theta_1+t\theta_2}{\sqrt{\theta_1(\theta_1+2t\theta_2+t^2\theta_3)}}\leq U,
\end{equation}
if such $t$ exists (otherwise, output $t=1$), and such that
\begin{equation}\label{eq:boundml}
0<m\leq 2^l\leq \max\left(1,\frac{16\theta_3}{\theta_1(U-L)}\right).
\end{equation}
\STATE $t_1\leftarrow1$
\STATE $L_2\leftarrow L^2$
\STATE $(s_1,r_1)\leftarrow\mbox{\Signcompute{}}(\theta_1,\theta_2,\theta_3,t_1,L,U)$
\IF{$\theta_1+\theta_2>0$ and $r_1\geq L_2$}\label{step:beta1geqL}
\STATE $t\leftarrow t_1$
\ELSE
\STATE $U_2\leftarrow U^2$
\STATE $t_0\leftarrow0$
\STATE $\displaystyle t_2\leftarrow\frac{t_0+t_1}{2}$
\STATE $(s_2,r_2)\leftarrow\mbox{\Signcompute{}}(\theta_1,\theta_2,\theta_3,t_2,L,U)$
\STATE $l\leftarrow0$
\WHILE{$L_2>r_2$ or $U_2<r_2$ or $\theta_1+t_2\theta_2<0$}\label{line:while}
\IF{$s_2=1$}
\STATE $t_0\leftarrow t_2$
\ELSE
\STATE $t_1\leftarrow t_2$
\ENDIF
\STATE $\displaystyle t_2\leftarrow\frac{t_0+t_1}{2}$
\STATE $(s_2,r_2)\leftarrow\mbox{\Signcompute{}}(\theta_1,\theta_2,\theta_3,t_2,L,U)$
\STATE $l\leftarrow l+1$
\ENDWHILE
\STATE $t\leftarrow t_2$
\ENDIF

\end{algorithmic}
\end{algorithm}
\begin{lemma}\label{lem:bisection}
\LowerUpperBoundt{} produces $t=m/2^l$ satisfying (\ref{eq:formLU}) and (\ref{eq:boundml}), or $t=1$ in case there exists no $t$ that satisfies (\ref{eq:formLU}). Moreover, the number of iterations it performs is at most
\[
O\left(\log_2\frac{\theta_3}{\theta_1(U-L)}\right).
\]
\end{lemma}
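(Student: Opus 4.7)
Define $\beta(t):=(\theta_1+t\theta_2)/\sqrt{\theta_1(\theta_1+2t\theta_2+t^2\theta_3)}$, so the quantity $r$ computed by \Signcompute{} equals $\beta(t)^2$ and the target condition (\ref{eq:formLU}) is equivalent to ``$\theta_1+t\theta_2\ge 0$ and $L^2\le r\le U^2$''--- exactly the negation of the while-loop test. Direct differentiation yields
\[
\beta'(t)=\frac{-t(\theta_1\theta_3-\theta_2^2)}{\sqrt{\theta_1}\,(\theta_1+2t\theta_2+t^2\theta_3)^{3/2}}<0 \qquad (t>0),
\]
by the hypothesis $\theta_2^2<\theta_1\theta_3$. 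Hence $\beta$ is strictly decreasing on $[0,1]$ with $\beta(0)=1$, and the solution set of (\ref{eq:formLU}) is either empty or a closed subinterval $[t_U,t_L]\subset[0,1]$ with $\beta(t_U)=U$ and $\beta(t_L)=L$.

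The test on Line~\ref{step:beta1geqL} is equivalent to $\beta(1)\ge L$. If it holds, monotonicity forces either $\beta(1)\in[L,U]$ (so $t=1$ satisfies (\ref{eq:formLU})) or $\beta(1)>U$ (so no valid $t$ exists); the output $t=1$ is correct in either case, and this also produces the ``$\max(1,\cdot)$'' clause of (\ref{eq:boundml}) with $2^l=1$. Otherwise $\beta(1)<L$, and there is a unique $t^*\in(0,1)$ with $\beta(t^*)=(L+U)/2$. Because \Signcompute{} returns $s=1$ iff $\beta(t)>(L+U)/2$, i.e.\ iff $t<t^*$, the update rule of the while loop preserves the invariant $t^*\in[t_0,t_1]$ while halving the interval length; the loop terminates once the midpoint $t_2$ lands inside $[t_U,t_L]$, which is precisely when (\ref{eq:formLU}) holds. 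Since we bisect $[0,1]$, at termination $t=t_2=m/2^l$ is a dyadic rational with $0<m<2^l$.

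For the iteration count, the mean value theorem gives $t_L-t_U=(U-L)/|\beta'(\xi)|$ for some $\xi\in(t_U,t_L)$; standard bisection analysis then gives $l\le\log_2(C/(t_L-t_U))$ for an absolute constant $C$. It therefore suffices to bound $|\beta'|$ on the target. Using the algebraic identity $\theta_1(\theta_1+2t\theta_2+t^2\theta_3)=(\theta_1+t\theta_2)^2+t^2(\theta_1\theta_3-\theta_2^2)$ together with $\beta(\xi)^2\ge L^2$ yields $\theta_1+\xi\theta_2\ge L\xi\sqrt{(\theta_1\theta_3-\theta_2^2)/(1-L^2)}$, and substituting back into the formula for $\beta'$ produces an estimate of the shape $|\beta'(\xi)|\le K\sqrt{(1-L^2)/(1-\tau^2)}\,\sqrt{\theta_3/\theta_1}$, where $\tau:=\theta_2/\sqrt{\theta_1\theta_3}$ and $K$ is universal. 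Since bisection is entered only when $\beta(1)<L$, a Taylor expansion $1-\beta(1)\sim(1-\tau^2)\theta_3/(2\theta_1)$ at $t=0$ forces $|\tau|\le L$ and $\theta_3/\theta_1\gtrsim(1-L^2)/(1-\tau^2)$, which upgrades the derivative bound to $|\beta'(\xi)|\le K'\theta_3/\theta_1$ with a universal $K'$. Combining these estimates gives $l\le\log_2\bigl(16\theta_3/(\theta_1(U-L))\bigr)$, yielding both (\ref{eq:boundml}) and the claimed $O(\log_2(\theta_3/(\theta_1(U-L))))$ iteration bound.

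The main obstacle is the last step: assembling the target-derivative bound together with the entry condition with constants sharp enough to produce the explicit factor $16$ in (\ref{eq:boundml}). The hypothesis $L>1-10^{-3}$ is what keeps the ratio $(1-L^2)/(1-\tau^2)$ under control; everything else is routine one-variable calculus on $\beta$.
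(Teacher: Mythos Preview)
Your correctness argument (the first three paragraphs) is fine and essentially matches the paper. The trouble is entirely in the derivative estimate.

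The claimed intermediate bound $|\beta'(\xi)|\le K\sqrt{(1-L^2)/(1-\tau^2)}\,\sqrt{\theta_3/\theta_1}$ does not follow from the substitution you describe. Plugging your lower bound $\theta_1+\xi\theta_2\ge L\xi\sqrt{(\theta_1\theta_3-\theta_2^2)/(1-L^2)}$ into the explicit formula $|\beta'(t)|=\theta_1 t(\theta_1\theta_3-\theta_2^2)/\bigl[(\theta_1+t\theta_2)^2+t^2(\theta_1\theta_3-\theta_2^2)\bigr]^{3/2}$ yields
\[
|\beta'(\xi)|\ \le\ \frac{(1-L^2)^{3/2}}{L^2\,\xi^2\,\sqrt{1-\tau^2}}\,\sqrt{\theta_1/\theta_3},
\]
with the ratio $\theta_1/\theta_3$ inverted and an uncontrolled $1/\xi^2$; this is not the shape you assert. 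The subsequent ``Taylor expansion $1-\beta(1)\sim(1-\tau^2)\theta_3/(2\theta_1)$'' step is also not a rigorous inequality (and $\theta_3/\theta_1$ need not be small), so neither the claim $|\tau|\le L$ nor the claim $\theta_3/\theta_1\gtrsim(1-L^2)/(1-\tau^2)$ is actually established. Your final paragraph admits this is where the argument is incomplete; as written it is a genuine gap, not just a matter of constants.

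The paper sidesteps all of this with a short case split on the sign of $\theta_2$. Using the identity $|\beta'|/\beta^3=(\theta_1\theta_3-\theta_2^2)\theta_1 t/|\theta_1+\theta_2 t|^3$: if $\theta_2\ge 0$ then $\theta_1+t\theta_2\ge\theta_1$ and $t\le 1$ give $|\beta'|\le\theta_3/\theta_1$ immediately. If $\theta_2=-e<0$, the input constraint $e\le L\sqrt{\theta_1\theta_3}$ gives $\beta(\theta_1/(2e))\le L$, so every $t$ in the target satisfies $t\le\theta_1/(2e)$, whence $\theta_1-et\ge\theta_1/2$ and $|\beta'|\le 8\theta_3/\theta_1$. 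This yields $2^l\le 8\theta_3/(\theta_1(U-L))$ for every non-final step, hence the factor $16$ in (\ref{eq:boundml}). Note that the input hypothesis $-L\sqrt{\theta_1\theta_3}\le\theta_2$ is used directly here, rather than extracted from the entry condition $\beta(1)<L$; the hypothesis $L>1-10^{-3}$ is not actually needed for the bound.
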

\begin{proof}
Let
\[
\beta(t)=\frac{\theta_1+t\theta_2}{\sqrt{\theta_1(\theta_1+2t\theta_2+t^2\theta_3)}}.
\]
We first claim that
\[
\beta'(t)=\alpha'(t)=-\frac{\left( \theta_1\,\theta_3-{\theta_2}^{2}\right) \,t}{\sqrt{\theta_1}\,{\left( \theta_3\,{t}^{2}+2\,\theta_2\,t+\theta_1\right) }^{\frac{3}{2}}}
\]
This is a routine computation and is left to the reader. In particular, $\theta_2^2<\theta_1\theta_3$ implies that $\alpha(t)$ and $\beta(t)$ are decreasing functions for $t\geq0$. If $\beta(1)\geq L$ (which is decided in Step~\ref{step:beta1geqL} of Algorithm~\ref{alg:compute_approx_quadratic}) then there are two possible scenarios:
\begin{enumerate}
\item If $\beta(1)>U$ then a $t$ satisfying (\ref{eq:formLU}) does not exist and the output of the algorithm is $t=1$ as claimed.
\item If $\beta(1)\leq U$ then the output of the algorithm $t=1$ satisfies (\ref{eq:formLU}) and (\ref{eq:boundml}) as claimed.
\end{enumerate}
On the other hand, if $\beta(1)\leq L$, then
\[
\alpha(0)=1-\frac{L+U}{2}>0,\qquad \alpha(1)=\beta(1)-\frac{L+U}{2}\leq \frac{L-U}{2}<0,
\]
which implies that the bisection method used in the algorithm produces an approximation of the unique root $t_*\in(0,1)$ of $\alpha(t)$. In particular, note that $\alpha(t_*)=0$ implies that $\theta_1+t_*\theta_2>0$ and
\[
\beta(t_*)=\frac{L+U}{2}\in(L,U),
\]
which yields $\beta(t_*)^2\in(L^2,U^2)$. By continuity of $\beta$, we conclude that the algorithm will at some point compute a $t_2$ such that $L^2\leq r_2\leq U^2$ and $\theta_1+t_2\theta_2>0$. That is, the algorithm finishes at some point, and the output satisfies (\ref{eq:formLU}) as claimed. It is a simple induction exercise to prove that, whenever the condition of Line~\ref{line:while} is satisfied (that is to say, at every step of the algorithm, except possibly at the last one) we have
\[
[p,q]\subseteq[t_0,t_1],
\]
where
\begin{align*}
[p,q]&=\{t\in [0,1]:|\alpha(t)|\leq (U-L)/2\}\\&=\{t\in[0,1]:\beta(t)\in[L,U]\}=\beta^{-1}([L,U]).
\end{align*}
At every step of the algorithm, the bisection method satisfies
\[
|t_1-t_0|=\frac{1}{2^l}.
\]
Thus, if the $l$--th step is not the last step of the algorithm then we have
\[
q-p\leq \frac{1}{2^l}.
\]
From the Mean Value Theorem of calculus, we have
\[
\frac{U-L}{q-p}=\frac{|\beta(q)-\beta(p)|}{|q-p|}=\beta'(\hat{t}\,),
\]
for some $\hat{t}\in[p,q]=\beta^{-1}([L,U])$.
Thus,
\begin{equation}\label{eq:auxn}
2^l\leq \frac{1}{q-p}=\frac{\beta'(\hat t\,)}{U-L}\leq\frac{\max\{|\beta'(t)|:\beta(t)\in[L,U]\}}{U-L}.
\end{equation}
Note moreover that
\begin{equation}\label{eq:betapbeta}
\frac{|\beta'|}{\beta^3}=\frac{(\theta_1\theta_3-\theta_2^2)\theta_1t}{|\theta_1+\theta_2t|^3}.
\end{equation}
Now we have to distinguish two cases:
\begin{enumerate}
\item If $\theta_2\geq0$ then (\ref{eq:betapbeta}) and $t\leq 1$ yield
\[
|\beta'(t)|\leq\frac{(\theta_1\theta_3-\theta_2^2)\theta_1}{\theta_1^3}\leq\frac{\theta_3}{\theta_1}.
\]
\item If $\theta_2< 0$ then by hypotheses we have $\theta_2=-e$ with $0<e\leq L\sqrt{\theta_1\theta_3}$. Thus,
\[
\beta\left(\frac{\theta_1}{2e}\right)=\frac{\theta_1/2}{\sqrt{\theta_1}\sqrt{\frac{\theta_3\theta_1^2}{4e^2}}}=\frac{e}{\sqrt{\theta_1\theta_3}}\leq L,
\]
and hence $\beta(t)\geq L$ implies that $t\leq \theta_1/(2e)$. Thus, (\ref{eq:betapbeta}) implies that
\[
\frac{|\beta'|}{\beta(t)^3}\leq\frac{(\theta_1\theta_3-e^2)\theta_1t}{(\theta_1/2)^3}\underset{t\leq1}\leq\frac{8(\theta_1\theta_3-e^2)}{\theta_1^2}\leq\frac{8\theta_3}{\theta_1},\qquad t\in[p,q],
\]
which readily gives
\[
|\beta'(t)|\leq\beta^3(t)\frac{8\theta_3}{\theta_1}\leq\frac{8\theta_3}{\theta_1},\qquad t\in[p,q].
\]
\end{enumerate}
Thus, for every possible value of $\theta_2\in(-L\sqrt{\theta_1\theta_3},\sqrt{\theta_1\theta_3})$ we have that
\[
\max\{|\beta'(t)|:\beta(t)\in[L,U]\}\leq\frac{8\theta_3}{\theta_1}.
\]
This together with (\ref{eq:auxn}) proves that, if the $l$-th step is not the last step of the algorithm, we have
\[
2^l\leq \frac{8\theta_3}{\theta_1(U-L)}.
\]
For the last step, this quantity has to be multiplied by $2$. The last claim of the lemma follows.
\end{proof}

\begin{remark}
  Our implementation of \LowerUpperBoundt{} continues bisection if the denominator of its output $t$ is larger than the denominator of $s_i$ in step \ref{line: where LUquadratic is called} of \TrackReallyLinearHomotopypractical{} (Algorithm~\ref{alg:practical_homotopy}) until the denominators match. This is done in order to reduce the size of the denominator of $s_{i+1}$.
\end{remark}

\subsection{Finding a close-by number with small integer coordinates}
In Step~\ref{step:tildeZ} of \TrackReallyLinearHomotopy{} we change $z_{i+1}$ to a close-by vector $\tilde{z}_{i+1}$ with rational coordinates. Although $z_{i+1}$ already has rational coordinates, we need to replace $z_{i+1}$ with a nearby vector whose coordinates are integer numbers of bounded (small) absolute value. If this step is not performed, the number of bits required to write up $z_{i+1}$ might increase at each loop, which is to be avoided. In this section we show how to deal with the general problem of, given $z\in\Q[\ii]^{n+1}$ and $\eps\in\Q$, $\eps>0$, finding $\tilde{z}\in\Z[\ii]^{n+1}$ such that\footnote{Recall that $d_R(x,y)=\arccos\frac{\langle x,y\rangle}{\|x\|\|y\|}$ is the usual distance from $x$ to $y$ as projective points in $\Pn$.} $d_R(\tilde{z},z)\leq\sqrt{\eps}$, and such a way that the absolute value of the coordinates of $\tilde{z}$ is relatively small.

Let us consider the following algorithm.
\begin{algorithm} \label{alg:compute_approx_newton}$\tilde{z}=\mbox{\Computeapproxzero{}}(z,\eps)$
\begin{algorithmic}[1]
\REQUIRE $z\in\Q[\ii]^{n+1}$; $\eps\in\left(0,\frac{1}{5}\right)\cap\Q$.
\ENSURE $\tilde{z}\in\Z[i]^{n+1}$ such that
\begin{equation}\label{eq:errorzialg}
  d_R(\tilde{z},z)\leq \sqrt{\eps},
\end{equation}
and such that the integer numbers appearing in the expression of $\tilde{z}$ are bounded in absolute value by $3\sqrt{\frac{n+1}{\eps}}$.

\STATE Let $\frac{a_i}{c_i}+\ii\frac{b_i}{e_i}$, $a_i,b_i,c_i,e_i\in\Z$, $c_i,e_i>0$, $0\leq i\leq n$ be the coordinates of $z$.
\STATE $m\leftarrow (c_0\cdots c_n)\cdot (e_0\cdots e_n)$.
\STATE $x\leftarrow m\cdot z$.
\STATE $\displaystyle r\leftarrow \left(\frac{21}{20}\right)^2$
\STATE $k\leftarrow 0$
\STATE $\alpha\leftarrow 4$
\WHILE{$\alpha\leq\frac{\eps\|x\|^2}{2(n+1)r}$}
\STATE $\alpha\leftarrow 4\alpha$
\STATE $k\leftarrow k+1$
\ENDWHILE
\STATE $\tilde{z}\leftarrow [ 2^{-k} x]$.
\end{algorithmic}
\end{algorithm}
Here, by $[y]$ ($y\in\C^{n+1}$) we mean the following: if $y=(a_0+\ii b_0,\ldots,a_n+\ii b_n)$ then
\[
[ y]=([ a_0]+\ii [ b_0],\ldots,[ a_n]+\ii [ b_n]),
\]
where for $t\in\R$, $[t]$ is the integer number which is closest to $t$ and is smaller than $t$ in absolute value (that is, $[t]=\lfloor t\rfloor$ if $t\geq0$ and $[t]=\lceil t\rceil$ if $t<0$).

\begin{lemma}\label{lem:minim}
Let $0\leq \theta_3<\theta_1$. Then, the function
\[
w(\theta_2)=\frac{\theta_1+\theta_2}{\sqrt{\theta_1(\theta_1+ 2\theta_2 +\theta_3)}},\qquad \theta_2\in[-\sqrt{\theta_1\theta_3},\sqrt{\theta_1\theta_3}]
\]
has a global minimum value equal to $\sqrt{1-\theta_3/ \theta_1}$.
\end{lemma}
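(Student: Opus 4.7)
The plan is to reduce the problem to a one-variable calculus exercise. Since $w$ is positive on the interval (the numerator is positive because $\theta_2 \geq -\sqrt{\theta_1\theta_3} > -\theta_1$ as $\theta_3<\theta_1$, and the radicand $\theta_1+2\theta_2+\theta_3 \geq (\sqrt{\theta_1}-\sqrt{\theta_3})^2 > 0$ for $\theta_2$ in the given interval), minimizing $w$ is equivalent to minimizing $w^2$, which is a rational function that is much cleaner to differentiate.

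First I would compute
\[
\frac{d}{d\theta_2}\,w^2(\theta_2)=\frac{2(\theta_1+\theta_2)(\theta_2+\theta_3)}{\theta_1(\theta_1+2\theta_2+\theta_3)^2},
\]
which is a straightforward quotient-rule computation (the factor $\theta_1+2\theta_2+\theta_3-(\theta_1+\theta_2)=\theta_2+\theta_3$ appears after cancellation). The vanishing loci are $\theta_2=-\theta_1$ and $\theta_2=-\theta_3$. The first lies outside $[-\sqrt{\theta_1\theta_3},\sqrt{\theta_1\theta_3}]$ (again using $\theta_3<\theta_1$), while $\theta_2=-\theta_3$ lies inside because $\theta_3^2\leq\theta_1\theta_3$.

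Next I would analyze the sign of the derivative. On the interval the factor $\theta_1+\theta_2$ is strictly positive, so the sign of the derivative coincides with the sign of $\theta_2+\theta_3$. Hence $w^2$ is strictly decreasing on $[-\sqrt{\theta_1\theta_3},-\theta_3]$ and strictly increasing on $[-\theta_3,\sqrt{\theta_1\theta_3}]$, so the global minimum of $w$ on the interval is attained at $\theta_2=-\theta_3$. A direct substitution gives
\[
w(-\theta_3)=\frac{\theta_1-\theta_3}{\sqrt{\theta_1(\theta_1-\theta_3)}}=\sqrt{\frac{\theta_1-\theta_3}{\theta_1}}=\sqrt{1-\theta_3/\theta_1},
\]
which matches the claimed value.

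There is no real obstacle; the only care needed is to check that $-\theta_3$ is actually in the interval (which uses $0\leq\theta_3<\theta_1$) and that $w$ is positive on the interval, so that the sign analysis of $(w^2)'$ indeed transfers to $w$. The hypothesis $\theta_3<\theta_1$ is also what makes the minimum value strictly positive, as expected.
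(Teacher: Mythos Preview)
Your proof is correct and follows essentially the same approach as the paper: reduce to a one-variable calculus problem, locate the unique interior critical point at $\theta_2=-\theta_3$, and evaluate. The only cosmetic differences are that you differentiate $w^2$ rather than $w$ itself and conclude via a sign analysis of the derivative, whereas the paper differentiates $w$ directly and compares the value at $\theta_2=-\theta_3$ to the endpoint values $w(\pm\sqrt{\theta_1\theta_3})=1$; your version is slightly more explicit about why $-\theta_3$ lies in the interval and why $w>0$ there.
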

\begin{proof}
Note that $w$ is a differentiable function and
\[
w'(\theta_2)=\frac{\theta_3+\theta_2}{\sqrt{\theta_1}(\theta_1+2\theta_2+\theta_3)^{3/2}}.
\]
Hence, the minimum of $w$ is attained at $\theta_2=-\sqrt{\theta_1\theta_3}$, $\theta_2=\sqrt{\theta_1\theta_3}$ or $\theta_2=-\theta_3$. Now, $w(\sqrt{\theta_1\theta_3})=w(-\sqrt{\theta_1\theta_3})=1$ and $w(-\theta_3)=\sqrt{1-\theta_3/\theta_1}\leq1$. The lemma follows.
\end{proof}

\begin{lemma}\label{lem:findapproxzero}
Algorithm \ref{alg:compute_approx_newton} produces $\tilde z=(\tilde{\alpha}_0+\ii \tilde\beta_0,\ldots,\tilde\alpha_{n}+\ii \tilde\beta_n)\in\Z[\ii]^{n+1}$ satisfying (\ref{eq:errorzialg}) and such that
\[
|\tilde\alpha_i|,|\tilde\beta_i|\leq 3\sqrt{\frac{n+1}{\eps}}\qquad \forall\; 0\leq i\leq n.
\]
\end{lemma}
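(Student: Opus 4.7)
The plan is to separate the two conclusions—the coordinate bound and the projective distance bound—and treat them in turn after a brief setup.

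\emph{Setup.} Writing $m = (c_0\cdots c_n)(e_0\cdots e_n)$, which is a positive integer, one has $x = mz\in\Z[\ii]^{n+1}$, and $y := 2^{-k}x = (2^{-k}m)z$ is a \emph{positive real} scalar multiple of $z$. Hence $d_R(\tilde z, z) = d_R(\tilde z, y)$, so from now on I only compare $\tilde z = [y]$ with $y$.

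\emph{Loop analysis and coordinate bound.} At termination of the while loop, $\alpha = 4^{k+1}$ satisfies $4^{k+1} > T := \eps\|x\|^2 / (2(n+1)r)$, whence $\|y\|^2 = 4^{-k}\|x\|^2 < 8(n+1)r/\eps$. With $r = (21/20)^2$, one has $8r = 8\cdot 441/400 < 9$, so $\|y\| < 3\sqrt{(n+1)/\eps}$. By the definition of $[\cdot]$, $|[t]|\leq |t|$, so $|\tilde\alpha_i| = |[\Real y_i]|\leq|\Real y_i|\leq \|y\| < 3\sqrt{(n+1)/\eps}$, and similarly $|\tilde\beta_i|\leq 3\sqrt{(n+1)/\eps}$.

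\emph{Distance bound.} If $k = 0$ the loop did not execute, so $y = x\in\Z[\ii]^{n+1}$, $\tilde z = [y] = y$, and $d_R = 0$. If $k\geq 1$, the last body pass implies $4^k\leq T$, equivalently $\|y\|^2\geq 2(n+1)r/\eps$. Set $\Delta = \tilde z - y$; each real-coordinate rounding error satisfies $|t-[t]|<1$, so $\|\Delta\|^2 < 2(n+1)$. Together, $\rho := \|\Delta\|/\|y\| < \sqrt{\eps/r} = (20/21)\sqrt\eps$.

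The crucial step is the sharp bound $\sin d_R(\tilde z, y)\leq \rho$. Starting from the standard projective identity $\sin^2 d_R(\tilde z, y) = \|P_{y^\perp}\Delta\|^2/\|\tilde z\|^2$, expanding $\|\tilde z\|^2 = \|y\|^2 + 2\Real\langle\Delta, y\rangle + \|\Delta\|^2$, and using $\|P_{y^\perp}\Delta\|^2\|y\|^2 = \|\Delta\|^2\|y\|^2 - |\langle\Delta, y\rangle|^2$, a direct computation yields the sum-of-squares identity
\[
\|\Delta\|^2\|\tilde z\|^2 - \|P_{y^\perp}\Delta\|^2\|y\|^2 = \bigl(\Real\langle\Delta, y\rangle + \|\Delta\|^2\bigr)^2 + \bigl(\Imag\langle\Delta, y\rangle\bigr)^2 \geq 0,
\]
whence $\sin^2 d_R \leq \|\Delta\|^2/\|y\|^2 = \rho^2$. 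Finally, for $\eps<1/5$ the Taylor estimate $\sin\sqrt\eps \geq \sqrt\eps(1-\eps/6) \geq (29/30)\sqrt\eps$ together with $20/21 < 29/30$ gives $\sin d_R \leq (20/21)\sqrt\eps < \sin\sqrt\eps$, and monotonicity of $\sin$ on $[0,\pi/2]$ yields $d_R(\tilde z, z)\leq\sqrt\eps$.

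\emph{Main obstacle.} The sharp projective bound $\sin d_R(\tilde z, y)\leq \|\Delta\|/\|y\|$ is what makes the constant $r = (21/20)^2$ tight enough: the naive estimate via $\|\tilde z\|\geq\|y\|-\|\Delta\|$ only yields $\sin d_R \leq \rho/(1-\rho)$, which is too weak to beat $\sin\sqrt\eps$ when $\eps$ is close to $1/5$. The algebraic cancellation producing the sum of two real squares is the nonobvious piece; once it is in hand, everything else is bookkeeping of the loop invariants together with an elementary numerical comparison of $20/21$, $29/30$, and $8(21/20)^2 < 9$.
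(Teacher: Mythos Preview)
Your proof is correct and takes a genuinely different route from the paper's in the key distance estimate. The paper first shows, using the specific property of rounding \emph{towards zero} (namely $t\,[t]\geq [t]^2$), that $\|\Delta\|\leq\|y\|$; this is then fed into the auxiliary Lemma~\ref{lem:minim} to obtain $\cos d_R\geq\sqrt{1-\|\Delta\|^2/\|y\|^2}$, i.e.\ $d_R\leq\arcsin(\|\Delta\|/\|y\|)$, and finishes with the monotonicity of $s^{-1}\arcsin s$ together with $\arcsin(1/2)/(1/2)\leq 21/20$. Your sum-of-squares identity
\[
\|\Delta\|^2\|\tilde z\|^2-\|P_{y^\perp}\Delta\|^2\|y\|^2=(\Real\langle\Delta,y\rangle+\|\Delta\|^2)^2+(\Imag\langle\Delta,y\rangle)^2
\]
yields $\sin d_R\leq\|\Delta\|/\|y\|$ \emph{without} any hypothesis on $\Delta$, so you bypass both Lemma~\ref{lem:minim} and the rounding-direction argument entirely; your final comparison $20/21<29/30\leq\sin\sqrt\eps/\sqrt\eps$ is the Taylor counterpart of the paper's $\arcsin$ estimate. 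What your approach buys is independence from the particular rounding rule (any nearest-integer scheme would work), at the cost of the slightly less transparent algebraic identity; what the paper's approach buys is a geometrically intuitive reduction via Lemma~\ref{lem:minim} once $\|\Delta\|\leq\|y\|$ is in hand. One minor point you leave implicit: when $k\geq1$ one should note $\tilde z\neq0$ so that $d_R$ is defined, but this follows immediately from $\|y\|^2\geq 2(n+1)r/\eps>10(n+1)$, forcing some coordinate of $y$ to have a real or imaginary part exceeding $1$ in absolute value.
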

\begin{proof}
First note that, if the stopping condition of the loop is satisfied at the first step, then the output $\tilde{z}=x$ of the algorithm satisfies $d_R(\tilde{z},z)=0$ and
\[
\|x\|\leq\sqrt{\frac{8(n+1)r}{\eps}}\leq3\sqrt{\frac{n+1}{\eps}},
\]
and hence the claim of the lemma follows. Otherwise, the numbers $\alpha,k$ computed by the algorithm satisfy
\begin{equation}\label{eq:k}
\alpha=4^{k+1},\qquad 4^k\leq \frac{\eps\|x\|^2}{2(n+1)r}<4^{k+1}.
\end{equation}
Let $x=(\alpha_0+\ii\beta_0,\ldots,\alpha_n+\ii \beta_n)$ be the coordinates of $x$. Then, for $i=0,\ldots,n$, we have:
\[
|(\tilde\alpha_i+\ii\tilde\beta_i)-2^{-k}(\alpha_i+\ii\beta_i)|^2=([2^{-k} \alpha_i]-2^{-k}\alpha_i)^2+([ 2^{-k}\beta_i]-2^{-k}\beta_i)^2<2.
\]
Hence, denoting $y=2^{-k}x$ and $v=\tilde{z}-y$ we have
\begin{equation}\label{eq:boundva}
\|v\|^2=\sum_{i=0}^n |(\tilde\alpha_i+\ii\tilde\beta_i)-2^{-k}(\alpha_i+\ii\beta_i)|^2\leq 2(n+1).
\end{equation}
On the other hand,
\begin{align*}
\|y\|^2-\|v\|^2&=\|y\|^2-\|\tilde{z}-y\|^2=2\Real{\langle\tilde{z},y\rangle}-\|\tilde{z}\|^2\\
&=2\left(\sum_{i=0}^n2^{-k}\alpha_i[2^{-k}\alpha_i]+2^{-k}\beta_i[2^{-k}\beta_i]\right)-\|\tilde{z}\|^2\\
&\geq2\left(\sum_{i=0}^n[2^{-k}\alpha_i]^2+[2^{-k}\beta_i]^2\right)-\|\tilde{z}\|^2\\
&=2\|\tilde{z}\|^2-\|\tilde{z}\|^2\geq0.
\end{align*}
That is, $\|v\|^2\leq\|y\|^2$. Hence, the use of Lemma \ref{lem:minim} in the following chain of inequalities is justified:
\[
\frac{|\langle y+v,y\rangle|}{\|y\|\|y+v\|}\geq\frac{\Real{\langle y+v,y\rangle}}{\|y\|\|y+v\|}=\frac{\|y\|^2+\Real{\langle v,y\rangle}}{\|y\|\|y+v\|}=
\]
\[
 \frac{\|y\|^2+\Real{\langle v,y\rangle}}{\|y\|\sqrt{\|y\|^2+2\Real{\langle v,y\rangle}+\|v\|^2}}\underset{Lemma\; \ref{lem:minim}}{\geq}\sqrt{1-\frac{\|v\|^2}{\|y\|^2}}.
\]
Thus,
\[
d_R(\tilde{z},z)=d_R(\tilde{z},x)=d_R(\tilde{z},y)=d_R(y+v,y)=\arccos\frac{|\langle y+v,y\rangle|}{\|y\|\|y+v\|}\leq
\]
\[
\arccos \sqrt{1-\frac{\|v\|^2}{\|y\|^2}}=\arcsin\frac{\|v\|}{\|y\|}=\arcsin\frac{2^k\|v\|}{\|x\|}\underset{(\ref{eq:boundva})}{\leq} \arcsin\frac{2^k\sqrt{2(n+1)}}{\|x\|}.
\]
Note from (\ref{eq:k}) that
\begin{equation}\label{eq:boundv}
\frac{2^k\sqrt{2(n+1)}}{\|x\|}\leq \sqrt{\frac{\eps}{r}}\leq\sqrt{\eps}\leq\frac{1}{2}.
\end{equation}
The reader can check that the function $s\mapsto s^{-1}\arcsin s$, $s\in[0,1)$ is an increasing function. From this fact and (\ref{eq:boundv}) we get:
\[
\frac{\arcsin\frac{2^k\sqrt{2(n+1)}}{\|x\|}}{\frac{2^k\sqrt{2(n+1)}}{\|x\|}}\leq \frac{\arcsin \frac{1}{2}}{\frac{1}{2}}\leq \frac{21}{20}=\sqrt{r},
\]
which readily implies
\[
d_R(\tilde{z},z)\leq \arcsin\frac{2^k\sqrt{2(n+1)}}{\|x\|}\leq \sqrt{r}\;\frac{2^k\sqrt{2(n+1)}}{\|x\|}\underset{(\ref{eq:k})}{\leq} \sqrt{\eps},
\]
as wanted.

For the bound on $|\tilde{\alpha}_i|$ note that
\[
|\tilde{\alpha}_i|=\left|\left[2^{-k}\alpha_i\right]\right|\leq \left|2^{-k}\alpha_i\right|\underset{(\ref{eq:k})}{\leq}\frac{\sqrt{8(n+1)r}}{\|x\|\sqrt{\eps}}|\alpha_i|\leq 3\sqrt{\frac{n+1}{\eps}},
\]
where we have used $\|x\|\geq|\alpha_i|$. An identical chain of inequalities works for $\tilde{\beta}_i$.

\end{proof}

\section{The main algorithm}
We now describe the pseudo-code of an actual algorithm that performs the instructions described in \TrackReallyLinearHomotopy{} and is thus certified.

There are two choices in \TrackReallyLinearHomotopy{}: $R$ and $\delta$. We choose $R=\sqrt{2}$ and $\delta=3/4$ which make the computations simple. Besides, instead of using operator norm for the computation of $\chi_{i,1}^2$ we use Frobenius norm, which according to Section \ref{sec:operatorvsfrobenius} multiplies by a factor of $\sqrt{n+1}$ the upper bound for the number of homotopy steps. The reader may find helpful Table \ref{table:notationcompee} for comparing the names of the variables in \TrackReallyLinearHomotopy{} and \TrackReallyLinearHomotopypractical{}:

\begin{table}
\caption{Notation of \TrackReallyLinearHomotopypractical{} and \TrackReallyLinearHomotopy{}.}
\label{table:notationcompee}
\begin{tabular}{|c|c|}
\hline
\TrackReallyLinearHomotopypractical{}& \TrackReallyLinearHomotopy{}\\
\hline
$n_1$ & $\|f\|^2$\\
\hline
$n_2$ & $\|g\|^2$\\
\hline
$n_3$ & $\Real{\langle f,g\rangle}$\\
\hline
$\dot n$ & $\|f-g\|^2$\\
\hline
$n_4$ & $\|g_i\|^2$\\
\hline
$n_5$ & $\Real{\langle f,g_i\rangle}$\\
\hline
$n_6$ & $\Real{\langle f-g,g_i\rangle}$\\
\hline
$n_7$ & $\|z_i\|^2$\\
\hline
$v_1$ & $f(z_i)$ \\
\hline
$v_2$ & $g_i(z_i)$\\
\hline
$M$ & $\binom{Dg_i({z_i})}{{z_i}^*}^{-1}$\\
\hline
$\tilde{M}$ & $\binom{Dg_{i+1}({z_i})}{{z_i}^*}$\\
\hline
$\mfa$ & $\tilde\chi_1^2$ ($=\chi_1^2$ computed with Frobenius norm)\\
\hline
$\mfb$ & $\chi_2^2$\\
\hline
$\mfa\mfb$ & $\tilde\varphi^2=\tilde\chi_1^2\chi_2^2$ (plays the role of $\varphi^2=\chi_1^2\chi_2^2$)\\
\hline
$W$ & $\frac{c^2}{2P^2d^3\tilde\varphi^2}$\\
\hline
\end{tabular}
\end{table}

\begin{algorithm} \label{alg:practical_homotopy}$z_*=\mbox{\TrackReallyLinearHomotopypractical{}}(f,g,z_0)$
\begin{algorithmic}[1]
\REQUIRE $f,g\in\CHd$; $z_0\in\Q[i]^{n+1}$ is an approximate zero of $g$ satisfying (\ref{eq:hypz0}).
\ENSURE $z_*\in\Z[i]^{n+1}$ is an approximate zero of $f$ associated to the end of the homotopy path starting at the zero of $g$
associated to $z_0$
and defined by the homotopy (\ref{equ: linear homotopy}).
\STATE $i \leftarrow 0$; $s_i=0$.
\STATE $n_1\leftarrow\|f\|^2$.
\STATE $n_2\leftarrow\|g\|^2$.
\STATE $n_3\leftarrow\Real{\langle f,g\rangle}$.
\STATE $\dot n\leftarrow n_1^2+n_2^2-2n_3$.
\STATE $\displaystyle \eps_0\leftarrow \frac{u_0^2}{ (4d)^{3}(1+9 u_0/8)^2 }$
\STATE $\displaystyle W_0\leftarrow \ConstantctwooverPtwo\frac{1}{d^3}$
\WHILE {$s_i <1$}
\STATE $n_4\leftarrow (1-s_i)^2n_2+s_i^2 n_1+2s_i(1-s_i)n_3$
\STATE $n_5\leftarrow (1-s_i)n_3+s_i n_1$
\STATE $n_6\leftarrow s_i n_1-(1-s_i)n_2+(1-2s_i)n_3$
\STATE $n_7\leftarrow \|z_i\|^2$
\STATE $M_1\leftarrow Dg(z_i)$; $M_2\leftarrow Df(z_i)$.
\STATE
\[
M=(m_{ij})\leftarrow\binom{(1-s_i)M_1+s_iM_2}{{z_i}^*}^{-1}\in\mathcal{M}_{n+1}(\C).
\]
\STATE
\[
\mfa\leftarrow \left(n_4\sum_{k=0}^{n}\sum_{l=0}^{n-1}d_{l+1}|m_{kl}|^2n_7^{d_{l+1}-1}\right)+\left(\sum_{k=0}^n|m_{kn}|^2n_7 \right)
\]
\STATE $v_1\leftarrow f(z_i)\in\C^n$
\STATE $v_2\leftarrow g_i(z_i) = (1-s_i)g(z_i) + s_i v_1\in \C^n$
\STATE $v_3\leftarrow n_4 v_1 - n_5 v_2$ 
\STATE $v_4\leftarrow M\binom{v_3}{0}\in\C^{n+1}$.
\STATE
\[
\mfb\leftarrow 1+\frac{\|v_4\|^2}{n_7(n_1n_4-n_5^2)}.
\]
\STATE $W\leftarrow W_0/(\mfa\mfb)$
\STATE $L\leftarrow1-W+W^2/6$;\;\;$U\leftarrow 1-W/2$.
\STATE $t_i\leftarrow \LowerUpperBoundt{}(n_4,n_6,\dot n,L,U)$.\label{line: where LUquadratic is called}
\STATE $s_{i+1} \leftarrow \min\{1,s_i + t_i\} $;
\STATE $\eps\leftarrow\eps_0/\mfa$
\STATE $\displaystyle \tilde{M}\leftarrow \binom{(1-s_{i+1})M_1+s_{i+1}M_2}{{z_i}^*}\in\mathcal{M}_{n+1}(\C)$.
\STATE $v_5\leftarrow g_{i+1}(z_i) = (1-s_{i+1})g(z_i) + s_{i+1} v_1\in \C^n$
\STATE $z_{i+1}\leftarrow z_i-\tilde{M}^{-1}\binom{v_5}{0}\in\C^{n+1}$.
\STATE $\tilde{z}_{i+1}\leftarrow \mbox{\Computeapproxzero{}}(z_{i+1},\eps)$.
\STATE $z_{i+1}\leftarrow \tilde{z}_{i+1}$.
\STATE $i \leftarrow i + 1$.
\ENDWHILE
\STATE $z_* \leftarrow {z}_{i}$.
\end{algorithmic}
\end{algorithm}

We should point out that in our practical implementation of the algorithm lines 13, 14, and 19 as well as lines 26, 27, and 28 correspond to the calls to the subroutine executing one step of Newton's method for a specialization of the system~(\ref{equ: linear homotopy}). We break this up into smaller steps above for the purpose of the complexity analysis performed in Subsection \ref{subsec:bitcomp}.

\begin{remark}\label{rmk:bound2lgeneral}
From (\ref{eq:UminusLBoundbis}) and (\ref{eq:boundml}), for every $i\geq0$ the number of iterations of \LowerUpperBoundt{} at Step 23 is at most
\[
O\left(\log_2 \max\left(1,\frac{\|f-g\|nd^3\max\{\mu(f_t,\zeta_t):0\leq t\leq 1\}}{\min\{\|f_t\|:0\leq t\leq 1\}}\right)\right).
\]
\end{remark}

\section{Complexity analysis}

In this section we analyze the bit complexity of \TrackReallyLinearHomotopypractical{}. Given a rational number $p/q\in\Q$, $gcd(p,q)=1$, the {\em bit length} of $p/q$ is defined as
\[
{\rm bl}(p/q)=\log_2(\max{|p|,|q|})+1.
\]
We also define ${\rm bl}(0)=1$. Note that ${\rm bl}(p/q)$ is a (tight) upper bound for the number of binary digits required to write up $p$ or $q$. writing $p/q$ thus takes at most $2{\rm bl(p/q)}$ bits.

Recall that an algorithm (i.e. a Turing machine) is said to have running time polynomial on quantities $c_1(x),c_2(x),\ldots,c_l(x)$ (where the $c_i(x)$ are quantities depending on the input $x$ of the machine) if there exists a polynomial $p(X)\in\R[X_1,\ldots,X_l]$ such that the running time of the machine on input $x$ is bounded above by $p(c_1(x),\ldots,c_l(x))$. A convenient notation is the following: given some function $f(x)$ depending on the input $x$, we say that
\[
f(x)\leq (c_1(x),\ldots,c_l(x))^{O(1)}
\]
if a polynomial $p$ exists such that $f(x)\leq p(c_1(x),\ldots,c_l(x))$ for all possible input $x$. If a machine has running time which is polynomial in the (bit) size of its input, that is if the running time of the machine is $input\_size^{O(1)}$ then we say that the machine works in polynomial time. The reader does not need be very familiar with the concepts of computational complexity or Turing machine model to understand this section. However, we quote \cite[Introduction]{BlCuShSm98} and its references for a brief yet illustrating introduction to the different concepts of algorithms, and \cite{Papadimitriou94} for a systematic introduction to Turing machines and their complexity.

When it comes to adding or multiplying rational numbers, there exist smart ways of designing the operations which can notoriously speed up the elementary algorithms, see for example \cite{CormenLeisersonRivest1990}. However, we will not search for the optimal upper bounds on the complexity of our algorithm, because our intention is just to prove that it is polynomial in certain quantities as claimed in Theorem \ref{th:mainintro}. We just recall from \cite{CormenLeisersonRivest1990} that $k$ arithmetic operations\footnote{By a.o. we mean an operation of the form $+,-,\times,/$, or a comparison $<,\leq$ or an assignment of a value to a variable, or computation of the integer part of a number.} (a.o. from now on) can be performed on rational inputs of bit length at most $h$, in time which is polynomial in $k$ and $h$, that is in time $(kh)^{O(1)}$, and the result of this sequence of a.o. is a rational number $r\in\Q$ such that ${\rm bl}(r)\leq (kh)^{O(1)}$.

Given a vector $v\in\Q[\ii]^k$, we define its bit length as
\[
{\rm bl}(v)=\max\{{\rm bl}(a_i),{\rm bl}(b_i):v=(a_1+\ii b_1,\ldots, a_k+\ii b_k)\}.
\]

\subsection{Bit complexity of \Signcompute{}}
Let $h$ be an upper bound for the bit length of the input $(a,b,c,t,L,U)$ of \Signcompute{}. The algorithm performs a fixed number of arithmetic operations on the rational numbers which are its input. Hence, the bit complexity of \Signcompute{} is $h^{O(1)}$.
\subsection{Bit complexity of \LowerUpperBoundt{}}\label{sec:complexityLowerUpperBoundt}
Let $h$ be an upper bound for the bit length of the input $(a,b,c,L,U)$ of \LowerUpperBoundt{}. Until Step $11$, \LowerUpperBoundt{} performs a fixed number of arithmetic operations on the rational numbers which are its input (including two applications of \Signcompute{}). The bit complexity of \LowerUpperBoundt{} until Step $11$ is thus $h^{O(1)}$. Each of the loops starting at Step $12$ also performs a fixed number of arithmetic operations, but now the bit length of the number $t_2$ invoked in \Signcompute{} at line $19$ grows with each loop. More precisely, after $i$ iterations,
\[
{\rm bl}(t_2)\leq O(i),
\]
and thus the maximum bit length in all the numbers appearing at the algorithm in the $i$--th loop is $(h+i)^{O(1)}$. The total bit complexity is thus
\[
O(h)+\sum_{i=1}^{\sharp\text{loops}}(h+i)^{O(1)}\leq (h+\sharp\text{loops})^{O(1)}.
\]
From Remark \ref{rmk:bound2lgeneral}, during an application of \TrackReallyLinearHomotopypractical{}
\[
\sharp\text{loops}\leq
O\left(\log_2 \max\left(1,\frac{\|f-g\|nd^3\max\{\mu(f_t,\zeta_t):0\leq t\leq 1\}}{\min\{\|f_t\|:0\leq t\leq 1\}}\right)\right)
\]
Thus, the bit complexity of \LowerUpperBoundt{} on inputs of bit length at most $h$ is, during an application of \TrackReallyLinearHomotopypractical{}, at most
\[
\left(h+\log_2 \max\left(1,\frac{\|f-g\|nd^3\max\{\mu(f_t,\zeta_t):0\leq t\leq 1\}}{\min\{\|f_t\|:0\leq t\leq 1\}}\right)\right)^{O(1)}.
\]
\subsection{Bit complexity of \Computeapproxzero{}}\label{sec:complexityComputeapproxzero}
Let $h$ be an upper bound for the bit length of the input $(z,\eps)$ of \Computeapproxzero{}. Steps $1$ to $6$ of \Computeapproxzero{} perform $O(n)$ a.o. on inputs of bit length bounded by $h$ and thus these steps take time
\[
(nh)^{O(1)},
\]
which is also a bound for the bit length of $x$ (in the notations of Algorithm \Computeapproxzero{}). The number of iterations the algorithm will perform is then at most
\[
O(\log_2\|x\|)\leq O(\log_2(nh)).
\]
at each step the bit length of $\alpha$ increases by a factor of $4$, and checking the stopping criterion can be done in $(nh/\log_2(\eps))^{O(1)}$. Hence the total bit complexity of the while loop is
\[
\sum_{i=1}^{\sharp\text{loops}}((nh/\log_2(\eps))^{O(1)}+O(i))\leq (nh\log_2(\eps)+\sharp\text{loops})^{(O(1)}\leq (nh\log_2(\eps))^{O(1)}.
\]
Step $11$ can then be done in $(nh)^{O(1)}$. Thus, the total bit complexity of \Computeapproxzero{} is $(nh\log_2(\eps))^{O(1)}$.

\subsection{Bit complexity of \TrackReallyLinearHomotopypractical{}}\label{subsec:bitcomp}
Let $h$ be an upper bound for the bit length of the input $(f,g,z_0)$ of \TrackReallyLinearHomotopypractical{}. Let $S>0$ be the number of non-zero monomials in the dense representations of $f$ and $g$. We assume that
\[
\mu_{max}=\max\{\mu(f_t,\zeta_t):0\leq t\leq 1\}<\infty,
\]
which indeed implies that $\mathcal{C}_0<\infty$ and by Theorem \ref{th:stepsizereallylinear} we know that \TrackReallyLinearHomotopypractical{} actually produces an approximate zero of $f$. We now analyze the operations performed in each step of \TrackReallyLinearHomotopypractical{}.

\begin{enumerate}
\item The operations before the while loop:
\begin{itemize}
\item Steps $2,3,4$: two squared--norm computations and one inner product computation. That is $O(S)$ a.o. with rationals of bit length $\max\{h,l\}$ where $l$ is an upper bound for the bit length of the multinomial coefficients $\binom{d_i}{\alpha_i}$ which appear in the definition of Bombieri--Weyl's product (see Section \ref{sec:Hd}). Note that $l\leq\log(d!)\leq d^{O(1)}$. Thus, $\max\{h,l\}\leq (h+d)^{O(1)}\leq(hd)^{O(1)}$ and the bit complexity of these steps is at most $(Shd)^{O(1)}$. The numbers they produce have bit length $(Shd)^{O(1)}$ as well.
\item Steps $1,5,6,7$: a constant number of a.o. with rationals of bit length $(Shd)^{O(1)}$ is again $(Shd)^{O(1)}$ (and the numbers produced have the bit length bounded by the same quantity).
\end{itemize}
\item Step $8$ (number of loops): from Theorem \ref{th:stepsizereallylinear} and Lemma \ref{lem:frobenius}, the number of loops is at most $\lceil \constantForHomotopyComplexity\sqrt{n+1}d^{3/2}\mathcal{C}_0\rceil$, where $\mathcal{C}_0$ is the length of the path $(f_t,\zeta_t)$ in the condition metric. For counting the bit complexity of each loop, let $h_i$ be $h$ or the maximum bit length of the rational numbers $s_i,z_i,t_i$ (whichever is greater), and let $h_{max}=\max\{h_i\}$ (we will prove latter that $h_{max}<\infty$). The bit complexity of the $i$--th loop is bounded as follows.
\begin{itemize}
\item Steps $9,10,11$: a constant number of a.o. with rationals of bit length $(h_i)^{O(1)}$ is again $(h_i)^{O(1)}$.
\item Step $12$: computation of the squared norm of a $\C^{n+1}$ vector with rational coordinates of bit length bounded by $h_i$: bit complexity $(nh_i)^{O(1)}$ and $n_7$ has bit length at most $(nh_i)^{O(1)}$, as well.
\item Step $13$: computation of the derivative matrices of $f$ and $g$ at $z_i$, which is $(nSdh_i)^{O(1)}$ using the elementary evaluation method (see \cite{BaurStrassen1983} for a faster but more complicated one), and the bit length of the numbers is at most $(nSdh_i)^{O(1)}$.
\item Step $14$: addition of two $n\times(n+1)$ matrices with rational entries of bit length at most $(nSdh_i)^{O(1)}$ is $(nSdh_i)^{O(1)}$, then an inverse matrix computation is $(nSdh_i)^{O(1)}$ using modular techniques\footnote{Exact linear algebra is a large research field, see {\tt http://linalg.org/people.html} for a list of people working on the subject, as well as software and research articles.}. Indeed, computing of the inverse is equivalent to solving $n+1$ systems of equations with rational coefficients. Each of these systems can be first normalized to systems with integer coefficients of size $(nSdh_i)^{O(1)}$, which (according to, e.g., \cite{Dixon82}) can be solved in time $(nSdh_i)^{O(1)}$. The total bit complexity of this step is thus $(nSdh_i)^{O(1)}$.
\item Step $15$: $O(n^2\log_2(d))$ arithmetic operations (the $\log_2 d$ in this formula is needed to compute $n_7^{d_{l+1}-1}$) with numbers of bit length $(nSdh_i)^{O(1)}$ is again $(nSdh_i)^{O(1)}$, and the bit length of $a$ is again bounded by $(nSdh_i)^{O(1)}$.
\item Steps $16,17$: computation of $f(z_i)$ and $g(z_i)$ is $(nSdh_i)^{O(1)}$ because that is a bound for the bit length of the rational numbers appearing in the monomial expansion of $f,g$ and also for the bit length of the coordinates of $z_i$. There are also a constant number of a.o. which is again $(nSdh_i)^{O(1)}$.
\item Step $18$: a constant number of a.o. is again $(nSdh_i)^{O(1)}$.
\item Step $19$: a matrix--vector product, $O(n^2)$ a.o. with rationals of bit length bounded by $(nSdh_i)^{O(1)}$, is again $(nSdh_i)^{O(1)}$.
\item Steps $20,21,22$: a constant number of a.o. is again $(nSdh_i)^{O(1)}$.
\item Step $23$: an application of \LowerUpperBoundt{} with input data whose bit length is bounded by $(nSdh_i)^{O(1)}$, according to Section~\ref{sec:complexityLowerUpperBoundt} costs
\[
\left((nSdh_i)^{O(1)}+\log_2 \max\left(1,\frac{\|f-g\|nd^3\mu_{max}}{\min\{\|f_t\|:0\leq t\leq 1\}}\right)\right)^{O(1)}.
\]
By hypothesis, the output of \LowerUpperBoundt{} has the bit length bounded by $h_{i+1}\leq h_{max}$.
\item Step $24$: a constant number of a.o. is again $(nSdh_{max})^{O(1)}$.
\item Step $25$: a division of two rational numbers of the respective bit lengths $(nSdh_{max})^{O(1)}$ and
\begin{equation}\label{eq:bla}
{\rm bl}(\mfa)={\rm bl}(\tilde\varphi_i^2)\underset{(\ref{eq:varphivsmui})}{\leq} O(\sqrt{n}\log_2\mu_{max})
\end{equation}
costs $(nSdh_{max}\log_2\mu_{max})^{O(1)}.$
\item Step $26$: adding two $n\times (n+1)$ matrices, $O(n^2)$ a.o. with rationals of bit length $(nSdh_{max}\log_2\mu_{max})^{O(1)}$ has bit complexity $(nSdh_{max}\mu_{max})^{O(1)}$.
\item Step $27$: as in Step $17$, this takes time $(nSdh_i)^{O(1)}$.
\item Step $28$: solving a system of equations and adding two vectors with bit lengths bounded by $(nSdh_{max}\log_2\mu_{max})^{O(1)}$ is again $(nSdh_{max}\mu_{max})^{O(1)}$ according to~\cite{Dixon82}.
\item Step $29$: an application of \Computeapproxzero{} with input whose bit length is bounded by $(nSdh_{max}\log_2\mu_{max})^{O(1)}$. From Section \ref{sec:complexityComputeapproxzero}, this has bit complexity $(nSdh_{max}\log_2\mu_{max})^{O(1)}$.
\item Step $30,31$: a constant number of a.o. with rationals of bit length $(nSdh_{max}\log_2\mu_{max})^{O(1)}$ is $(nSdh_{max}\log_2\mu_{max})^{O(1)}$.
\end{itemize}
\item Step $33$: One a.o. is again $(nSdh_{max}\log_2\mu_{max})^{O(1)}$.
\end{enumerate}
The bit complexity of \TrackReallyLinearHomotopypractical{} is thus
\[
\left((nSdh_{max}\log_2\mu_{max})^{O(1)}+\log_2 \max\left(1,\frac{\|f-g\|nd^3\mu_{max}}{\min\{\|f_t\|:0\leq t\leq 1\}}\right)\right)^{O(1)}\mathcal{C}_0,
\]
where $h_{max}$ is the maximum of $h$ and the bit lengths of $s_i,t_i$ and $z_i$. Now, all the $s_i$ and $t_i$ are numbers of the form $m/2^l$ where, from Remark~\ref{rmk:bound2lgeneral},
\[
{\rm bl}(m)\leq {\rm bl}(2^l)=l+1\leq O\left(\log_2 \max\left(1,\frac{\|f-g\|nd^3\mu_{max}}{\min\{\|f_t\|:0\leq t\leq 1\}}\right)\right).
\]
Thus, this is also an upper bound for the bit lengths of $s_i$ and $t_i$. As for that of $z_i$, note that from Lemma \ref{lem:findapproxzero} we have that at each step $i\geq1$,
\[
{\rm bl}(z_i)\leq O\left(\log_2\frac{\sqrt{n}}{\eps}\right)\leq O(\log_2(\sqrt{n} \mfa ))\underset{(\ref{eq:bla})}{\leq} O(\log_2(n\mu_{max})).
\]
Hence, we have
\[
h_{max}\leq h+\left(\left(\log_2 \max\left(1,\frac{\|f-g\|nd^3\mu_{max}}{\min\{\|f_t\|:0\leq t\leq 1\}}\right)\right)^2+\log_2(n\mu_{max})\right).
\]
The bit complexity of \TrackReallyLinearHomotopypractical{} is thus linear in $\mathcal{C}_0$ and polynomial in the following quantities:
\begin{itemize}
\item $n,S,d,h$,
\item $\log_2\mu_{max}$,
\item $\log_2(\|f-g\|/\min\{\|f_t\|:0\leq t\leq 1\})$.
\end{itemize}

\section{Proof of Theorem \ref{th:mainintro}}

We first note that \TrackReallyLinearHomotopypractical{}, performs the operations described by \TrackReallyLinearHomotopy{}, except for the use of Frobenius norm instead of operator norm in the computation of $\chi_{i,1}$. This follows directly from the description of the two algorithms and from lemmas \ref{lem:bisection} and \ref{lem:findapproxzero}.

Thus, from Theorem \ref{th:stepsizereallylinear} and Lemma \ref{lem:frobenius}, \TrackReallyLinearHomotopypractical{} has certified output. Moreover, its total bit complexity has been proved in section \ref{subsec:bitcomp} to satisfy the claim of Theorem \ref{th:mainintro}. For the bound on the size of the output, let $i=k$ be the final step of the algorithm. Then, the output $z_{k+1}$ of \TrackReallyLinearHomotopypractical{} is the result of applying \Computeapproxzero{} to some $(z_k,\eps)$ where $z_k\in\Q[\ii]^{n+1}$ and
\[
\eps=\frac{\eps_0}{\mfa}\underset{(\ref{eq:varphivsmuibis})}{\geq}\frac{c_0}{\sqrt{n+1}\mu(g_k,\zeta_k)^2}\underset{(\ref{eq:comparemus})}\geq \frac{c_1}{\sqrt{n}\mu(f,\zeta_{k+1})^2},
\]
$c_0$ and $c_1$ some constants. It follows from Lemma \ref{lem:findapproxzero} that $z_{k+1}$ has integer coordinates of bit length at most $O(\log_2(n\mu(f,\zeta_{k+1})))$, as claimed. The proof is now complete.

\section{Experiments}

Our implementation of Algorithm~\ref{alg:practical_homotopy} has been carried out in the top-level (interpreted) language of {\em Macaulay2}~\cite{M2www}. The exact linear algebra routines and evaluation of polynomials are inherently slow and there are many engineering improvements that can be made to speed up the execution; yet the computation takes reasonable time on the examples of modest size.

While more examples of computation along with the source code of the implementation are available at

{\tt http://people.math.gatech.edu/\~{}aleykin3/RobustCHT/}

\noindent here we describe two experiments. One of them involves a small family of equations, where most of the computation of the length of a homotopy path $\mathcal{C}_0$ can be carried out by hand. The other comes from an application in enumerative geometry and showcases the class of problems that can benefit from the developed certified algorithms.

\subsection{Actual number of steps vs. condition length}
In Lemma \ref{lem:frobenius} we claim that the number of steps (i.e. number of while loops) needed by Algorithm \ref{alg:practical_homotopy} is at most $\lceil  \constantForHomotopyComplexity\sqrt{n+1}d^{3/2}\mathcal{C}_0\rceil$. In this section we consider a simple family of examples parametrized by $m\geq0$ where the value of $\mathcal{C}_0$ can be approximated by quadrature formulas and show how the bounds based on $\mathcal{C}_0$ compare to the actual performance of the algorithm. Note that from (\ref{eq:condlength}) the condition length of a path $(f_t,\zeta_t)\subseteq\CHd\times\Pn$ (with $\zeta_t$ given by a smooth curve of affine representatives) is
\[
\int_0^1\chi_1(t)\sqrt{\frac{\|\dot f_t\|^2}{\|f_t\|^2}-\frac{\Real{(\langle \dot{f}_t,f_t\rangle)}^2}{\|f_t\|^4}+\frac{\|\dot{\zeta}_t\|^2}{\|\zeta_t\|^2}-\frac{|\langle \dot{\zeta}_t,\zeta_t\rangle|^2}{\|\zeta_t\|^4}},
\]
were
\[
\chi_1(t)=\left\|\binom{Df_t({\zeta_t})}{{\zeta_t}^*}^{-1}
\begin{pmatrix}\sqrt{d_1}\|f_t\|\|{\zeta_t}\|^{d_1-1}& & & \\
& \ddots & &\\
& &\sqrt{d_n}\|f_t\|\|{\zeta_t}\|^{d_n-1}\\
& & &\|{\zeta_t}\|\end{pmatrix}\right\|.
\]
In general, it is extremely hard to compute a priori $\mathcal{C}_0$ (even approximately). We consider here the simple case
\[
f_t(x_0,x_1)=x_1^2-(1+mt)x_0^2,\qquad \zeta_t=(1,\sqrt{1+mt})^T.
\]
Let $s=1+mt$. We can easily compute:
\[
\|f_t\|^2=1+s^2;\qquad \dot f_t=-mx_0^2;\qquad\|\dot f_t\|^2=m^2;\qquad \langle \dot{f}_t,f_t\rangle=ms;
\]
\[
\dot\zeta_t=\left(0,\frac{m}{2\sqrt{s}}\right)^T;\qquad \|\zeta_t\|^2=1+s;\qquad \|\dot\zeta_t\|^2=\frac{m^2}{4s};\qquad|\langle\dot\zeta_t,\zeta_t\rangle|^2=\frac{m^2}{4};
\]
Thus,
\[
\sqrt{\frac{\|\dot f_t\|^2}{\|f_t\|^2}-\frac{\Real{(\langle \dot{f}_t,f_t\rangle)}^2}{\|f_t\|^4}+\frac{\|\dot{\zeta}_t\|^2}{\|\zeta_t\|^2}-\frac{|\langle \dot{\zeta}_t,\zeta_t\rangle|^2}{\|\zeta_t\|^4}}=
\]
\[
 m\sqrt{\frac{1}{1+s^2}-\frac{s^2}{(1+s^2)^2}+\frac{1}{4s(1+s)}-\frac{1}{4(1+s)^2}}=m\sqrt{\frac{1}{(1+s^2)^2}+\frac{1}{4s(1+s)^2}}.
\]
On the other hand,
\[
\begin{pmatrix} Df_t(\zeta_t)\\\zeta_t^*\end{pmatrix}^{-1}=\begin{pmatrix}-2s&2\sqrt{s}\\1 &\sqrt{s}\end{pmatrix}^{-1}= \begin{pmatrix}\frac{-1}{2(1+s)}&\frac{1}{1+s}\\ \frac{1}{2\sqrt{s}(1+s)}&\frac{\sqrt{s}}{1+s}\end{pmatrix}
\]
\[
\chi_1(t)=\left\|\begin{pmatrix}\frac{-1}{2(1+s)}&\frac{1}{1+s}\\ \frac{1}{2\sqrt{s}(1+s)}&\frac{\sqrt{s}}{1+s}\end{pmatrix}
\begin{pmatrix}\sqrt{2}\sqrt{1+s^2}\sqrt{1+s}&0\\
0&\sqrt{1+s}\end{pmatrix}\right\|=
\]
\[
\frac{1}{\sqrt{1+s}}\left\|\begin{pmatrix}\frac{-\sqrt{1+s^2}}{\sqrt{2}}&1\\ \frac{\sqrt{1+s^2}}{\sqrt{2s}}&\sqrt{s}\end{pmatrix}
\right\|=
\frac{\sqrt{1+s^2}}{\sqrt{2s}},
\]
where to get the the last equality we compute the matrix norm by hand. With the change of variables $s=1+mt$ we have then proved that
\[
\mathcal{C}_0(f_t,\zeta_t)=\int_1^{1+m}\frac{\sqrt{1+s^2}}{\sqrt{2s}}\sqrt{\frac{1}{(1+s^2)^2}+\frac{1}{4s(1+s)^2}}\;ds,
\]
It is not an easy task to find this integral exactly, but we can at least try to approximate with some quadrature formula. In Octave-produced Table \ref{table:comparisonC0steps} and Figure \ref{fig:comparisonC0steps} we compare the values of upper and lower bounds
\[
\begin{array}{ccl}
LBound&\leq&\sharp(steps)\leq UBound,\text{ where}\\
LBound&=&28d^{3/2}\mathcal{C}_0(f_t,\zeta_t)\approx 79\mathcal{C}_0,\\
UBound&=&\constantForHomotopyComplexity\sqrt{n+1}d^{3/2}\mathcal{C}_0(f_t,\zeta_t)=316\mathcal{C}_0(f_t,\zeta_t)
\end{array}
\]
for different choices of $m\geq0$ and the number of steps performed by our algorithm to follow the homotopy $f_t$.
\begin{center}
\begin{table}[h]
\caption{Comparison of the bound of number of steps given by Lemma \ref{lem:frobenius} and the actual number of steps in the example given by $f_t=x_1^2-(1+mt)x_0^2$.}
\label{table:comparisonC0steps}
\begin{tabular} {|c|c|c|c|c|}
\hline
m&LB&steps&UB&UB/steps\\
\hline
10& 31& 184& 357& 1.95\\
\hline
20& 38& 217& 435& 2.01\\
\hline
30& 42& 237& 480& 2.03\\
\hline
40& 45& 250& 512& 2.05\\
\hline
50& 47& 260& 537& 2.07\\
\hline
60& 49& 269& 558& 2.08\\
\hline
70& 50& 276& 575& 2.08\\
\hline
80& 52& 282& 590& 2.09\\
\hline
90& 53& 288& 603& 2.1\\
\hline
100& 54& 292& 615& 2.11\\
\hline
1000& 77& 395& 872& 2.21\\
\hline
2000& 84& 426& 949& 2.23\\
\hline
3000& 88& 446& 995& 2.23\\
\hline
4000& 91& 457& 1027& 2.25\\
\hline
5000& 93& 468& 1052& 2.25\\
\hline
10000& 100& 499& 1129& 2.26\\
\hline
20000& 106& 530& 1207& 2.28\\
\hline
30000& 110& 547& 1252& 2.29\\
\hline
\end{tabular}
\end{table}
\end{center}

 \begin{figure}[h]
      {
      \centerline{
        \includegraphics[scale=1]{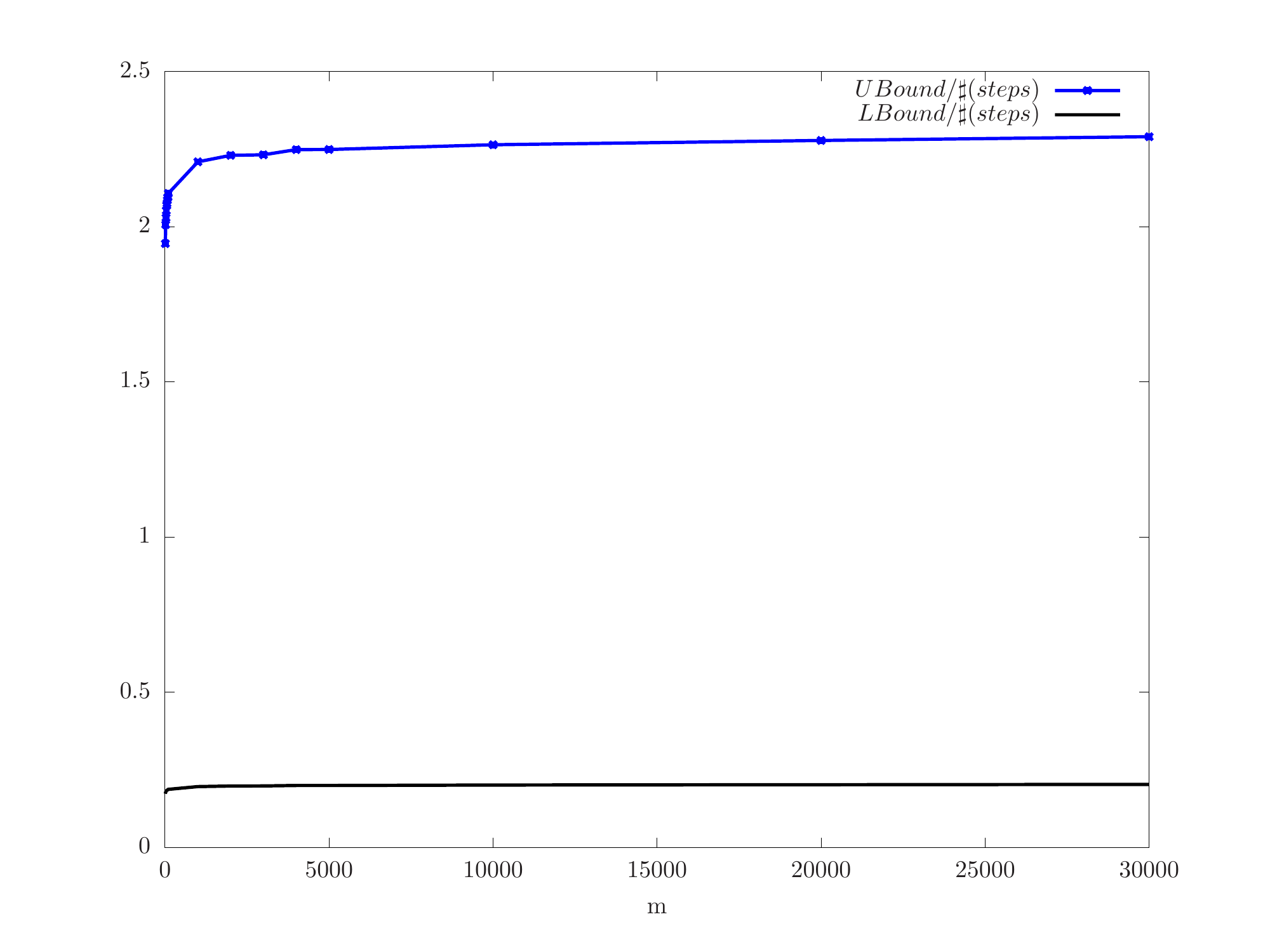}}
      }
\caption{Comparison of the ratio between the actual number of steps and its lower and upper bound.}
    \label{fig:comparisonC0steps}
    \end{figure}

\subsection{An application to a problem in Schubert calculus}

The computations of~\cite{Leykin-Sottile:HoG} confirmed the conjecture saying that the Galois group of a simple Schubert problem is the full symmetric group for ``small'' Grassmannians. These results produced using heuristic homotopy continuation methods take us far beyond the limitations of the symbolic methods.

Table~\ref{table:Galois}, a copy of~\cite[Table~1]{Leykin-Sottile:HoG}),  shows the number of solutions for the largest problem on $G(k,n)$ and the number of permutations found in the Galois group by the algorithm sufficient to generate the full symmetric group.
At the present all computations can be done within one day with a heuristic homotopy tracker employed.

\begin{table}[h]
 \begin{tabular}{|c||c|c|c|c|c|c|c|}\hline
  $k,n$ &{\bf 2,4}&{\bf 2,5}&{\bf 2,6}&2,7&2,8&2,9&2,10\\\hline
  solutions&{\bf 2}&{\bf 5}&{\bf 14}&42&132&429&1430\\\hline
  permutations&{\bf 4}&{\bf 6}&{\bf 5}&6&7&4&7\\\hline
 \end{tabular}\vspace{8pt}
 \begin{tabular}{|c||c|c|c|c|c||c|c|c|}\hline
  $k,n$ &3,5&3,6&3,7&3,8&{3,9}&4,6&4,7&4,8\\\hline
  solutions&5&42&462&6006&{17589}&14&462&8580\\\hline
  permutations&4&4&5&6&{7}&5&5&7\\\hline
 \end{tabular}\vspace{8pt}
 \caption{Galois group computation for simple Schubert problems in $G(k,n)$.}
 \label{table:Galois}\vspace{-10pt}
\end{table}

This problem falls naturally in the class where the certified algorithms of this paper can be applied. With the current implementation the algorithm of this paper can provide the {\em status of a theorem} to all of the computational results on up to $\Gr(2,6)$: the cases that can be certified within a day appear in bold in Table~\ref{table:Galois}.

The corresponding runs of the algorithm for $\Gr(2,6)$ involve tracking homotopies for six polynomial equations following the paths in $\P^6$ and have input, output, and all intermediate approximate zeroes defined over Gaussian integers $\Z[i]$. Due to the use of our Algorithm \Computeapproxzero\; to reduce the size of the integers in the intermediate steps, in this relatively large computation we do not encounter integers longer than {\em six} decimal digits amongst the coordinates of {\em all} approximate zeroes computed along all homotopy paths.

Let us remark that the largest certifiable case is already beyond the reach of purely symbolic algorithms (the problem with 14 solutions in $\Gr(2,6)$ is characterized as ``not computationally feasible'' in \cite{Billey-Vakil}).  There are several ways to push the frontier of provable results further. One is a low-level optimized implementation of our algorithm. Another is using a fast heuristic homotopy tracker to find the ``interesting'' paths (e.g., the ones that do not lead to a redundant permutation in the Galois group computation), break them up into a union of smaller pieces, and then execute a certified homotopy tracker for every small piece. The last step is trivially parallelizable and can be sped up in practice by using distributed computing.

\section{Proof of Theorem \ref{th:onestep}}\label{sec:proofth2}
We recall first two lemmas \cite[Lemma 4 and Lemma 5]{Beltran2009NM}. The second of these two lemmas is recalled here in a less general version than the original.
\begin{lemma}\label{lem:homoto1b}
Let $h_0,h\in\S$, $v\in\CHd$, $z_0,z\in\Pn$. Assume that $\chi_1(h_0,z_0)<+\infty$. Assume moreover that
\[
d_R(z_0,z)\leq\frac{\hat{a} }{d^{3/2}\chi_1(h_0,z_0)},
\]
\[
d_\S(h_0,h)\leq\frac{3\hat{a} }{2d^{3/2}\chi_1(h_0,z_0)},
\]
for some $\hat{a}<1/\sqrt{2}$. Then,
\[
\frac{\chi_1(h_0,z_0)}{1+\sqrt{2}\hat{a}}\leq \chi_1(h,z)\leq\frac{\chi_1(h_0,z_0)}{1-\sqrt{2}\hat{a} }\text{ and}
\]
\[
\varphi(h_0,v,z_0)\frac{(1-\sqrt{2}\,\hat{a} )^{\sqrt{2}}}{1+\sqrt{2}\,\hat{a}
}\leq\varphi(h,v,z)\leq\frac{\varphi(h_0,v,z_0)}{(1-\sqrt{2}\,\hat{a} )^{1+\sqrt{2}}}.
\]
\end{lemma}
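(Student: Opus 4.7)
The plan is to connect $(h_0, z_0)$ to $(h, z)$ by a short curve $(h_t, z_t)$ in $\S \times \Pn$ and then control $\chi_1$ and $\varphi$ along the way via a differential inequality. I would take $h_t$ to be the great-circle arc in $\S$ and $z_t$ the Riemannian geodesic in $\Pn$, parametrized jointly so that the resulting path in $\S \times \Pn$ is a geodesic; by hypothesis its total length is $T = \sqrt{d_\S(h_0,h)^2 + d_R(z_0,z)^2} \leq \tfrac{\sqrt{13}}{2}\,\hat{a}/(d^{3/2}\chi_1(h_0,z_0))$. These geodesics have the pleasant property $\Real\langle \dot h_t, h_t\rangle = \Real\langle \dot z_t, z_t\rangle = 0$, which keeps the subsequent matrix calculations clean.

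Next, write $\chi_1(h_t, z_t) = \|M_t^{-1} E_t\|$, with $M_t = \binom{Dh_t(z_t)}{z_t^*}$ and $E_t$ the diagonal scaling matrix from (\ref{equ:chi1}). Differentiating and using $(M_t^{-1})' = -M_t^{-1}\dot M_t\, M_t^{-1}$ together with submultiplicativity of the operator norm yields
\[
\left|\frac{d}{dt}\log\chi_1(h_t,z_t)\right| \;\leq\; \|M_t^{-1}\dot M_t\| + \|\dot E_t E_t^{-1}\|.
\]
The block structure of $\dot M_t$ (one row from $\dot h_t$ evaluated at $z_t$, contributions from $D\dot h_t(z_t)\cdot \dot z_t$, plus a $\dot z_t^*$ row) combined with the sharp Shub--Smale higher-derivative estimates in the Bombieri--Weyl norm (as in \cite[Ch.\ 16]{BlCuShSm98}) yields $\|M_t^{-1}\dot M_t\| \leq \sqrt 2\, d^{3/2}\chi_1(h_t,z_t)$, while $\|\dot E_t E_t^{-1}\|$ is a lower-order term absorbed into the same constant. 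Solving the resulting ODE $|\chi_1'| \leq \sqrt 2\, d^{3/2}\chi_1^2$ with the length bound on $T$ gives a multiplicative factor of $(1 \pm \sqrt 2\,\hat a)^{\mp 1}$, which is the first claim.

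For $\varphi$, since $v \in \CHd$ is held fixed along the path, I would rewrite $\chi_2^2 = \|v\|^2 + (\|h_t\|^2/\|z_t\|^2)\|M_t^{-1}\binom{v(z_t)}{0}\|^2$ and repeat the logarithmic-derivative analysis on $\chi_2$. The key observation is that only $M_t^{-1}$ and the $z_t$-dependent quantities change, so the same matrix-perturbation estimate propagates, producing an exponent of $\sqrt 2$ on $(1 - \sqrt 2\hat a)$ in the lower bound for $\chi_2$. Multiplying the two-sided bounds for $\chi_1$ and $\chi_2$ combines the exponents into $\sqrt 2$ and $1 + \sqrt 2$ exactly as stated.

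The main obstacle is producing the sharp constant $\sqrt 2$ in the derivative bound $\|M_t^{-1}\dot M_t\| \leq \sqrt 2\, d^{3/2}\chi_1$; a loose constant here would weaken the exponents and break the downstream use of the lemma. Achieving sharpness requires exploiting unitary invariance of the Bombieri--Weyl norm to normalize $z_t = \|z_t\|(1,0,\ldots,0)^T$, splitting $\dot M_t$ into its Jacobian and conjugate-transpose rows, and carefully accounting for the restriction to $z_t^\perp$ implicit in the definition of $\chi_1$. This is the technical core of Lemmas 4 and 5 of \cite{Beltran2009NM}, on which the argument is modeled, and I would essentially reproduce that computation with the orthogonality properties of the geodesic tangents above doing the bookkeeping.
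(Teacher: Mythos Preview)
The paper does not prove this lemma: it is quoted verbatim from \cite[Lemmas~4 and~5]{Beltran2009NM} and simply invoked. So there is no ``paper's own proof'' to compare against; your sketch is really an outline of the argument in that reference.

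Your overall strategy---joining $(h_0,z_0)$ to $(h,z)$ by a short curve and controlling $\chi_1$, $\chi_2$ via a logarithmic-derivative (Gr\"onwall-type) inequality---is indeed the mechanism behind the original proof. However, as written your constants do not close. You take a single product geodesic of length
\[
T=\sqrt{d_\S(h_0,h)^2+d_R(z_0,z)^2}\le\frac{\sqrt{13}}{2}\,\frac{\hat a}{d^{3/2}\chi_1(h_0,z_0)},
\]
and then assert a uniform bound $|\chi_1'|\le\sqrt2\,d^{3/2}\chi_1^2$ along it. Integrating that ODE over length $T$ yields a factor $(1-\sqrt2\,d^{3/2}\chi_1(h_0,z_0)\,T)^{-1}$, and with your bound on $T$ this is at best $(1-\tfrac{\sqrt{26}}{2}\hat a)^{-1}$, not the required $(1-\sqrt2\,\hat a)^{-1}$. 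The discrepancy is not cosmetic: the downstream inequalities in Theorem~\ref{th:onestep} (e.g.\ \eqref{eq:comparemus}, \eqref{eq:rrrb}) use exactly these constants.

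The underlying issue is that the sensitivity of $\chi_1$ is \emph{anisotropic}: the Lipschitz behavior in the $h$-direction and in the $z$-direction come with different constants, which is precisely why the hypotheses impose the asymmetric bounds $d_R\le\hat a/(d^{3/2}\chi_1)$ and $d_\S\le\tfrac32\hat a/(d^{3/2}\chi_1)$. Collapsing both into one Euclidean length $T$ with a single isotropic constant $\sqrt2$ throws this structure away. The argument in \cite{Beltran2009NM} keeps the two directions separate (or, equivalently, uses a direction-dependent derivative bound), and the $3/2$ in the $d_\S$ hypothesis is calibrated so that the two contributions combine to $\sqrt2\,\hat a$. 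Your plan becomes correct once you replace the single product geodesic by this two-stage (or weighted) analysis and track the two Lipschitz constants individually; the $\chi_2$ part, and hence the $\varphi$ bounds with exponents $\sqrt2$ and $1+\sqrt2$, then follow as you indicate.
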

\begin{lemma}\label{lem:homoto2}
Let $t\rightarrow h_s\in\S$, $0\leq s\leq T$ be a piece of a great circle in $\S$, parametrized by arc--length. Let $\eta_0\in\Pn$ be a projective zero of $h_0$ such that
$\mu(h_0,\eta_0)<+\infty$. Assume that
\[
T\leq \frac{1}{Pd^{3/2}\hat\varphi},\text{ where }\hat\varphi=\varphi(h_0,\dot h_0,\eta_0).
\]
Then, for $0\leq s<T$, $\eta_0$ can be continued to a zero $\eta_s\in\Pn$ of $h_s$ in such a way that $s\rightarrow
\eta_s$ is a $C^{1+Lip}$ curve. Moreover, consider the curve $s\rightarrow(h_s,\dot h_s,\eta_s)$, $0\leq s< T$.
Then, the following inequalities hold for every $s\in[0,T]$:
\[
\frac{\hat\varphi}{1+P\,d^{3/2}\hat\varphi s}\leq\varphi(h_s,\dot h_s,\eta_s)\leq \frac{\hat\varphi}{1-P\,d^{3/2}\hat\varphi s},
\]
\[
d_R(\eta_0,\eta_s)\leq
\frac{1}{\sqrt{2}d^{3/2}\chi_1(h_0,\zeta_0)}\left(1-\left(1-Pd^{3/2}\hat\varphi s\right)^{\sqrt{2}/P}\right),
\]
\[
d_\S(h_0,h_s)\leq\frac{1}{d^{3/2}H} \log\frac{1}{1-d^{3/2}H\chi_2(h_0,\dot h_0,\eta_0)s}
\]
\end{lemma}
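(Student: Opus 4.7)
The plan is to extract the lemma from a standard Bernoulli-type differential inequality satisfied by $\varphi$ along the continuation curve, using the Shub--Smale higher-derivative estimates on the Weyl-normalized Newton operator. The constant $P=\sqrt{2}+\sqrt{4+5/8}$ is precisely the Lipschitz-type constant that appears in these estimates for the one-point condition number.

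First I would set up the continuation. Because $\mu(h_0,\eta_0)<\infty$, Lemma \ref{lem:muboundsimplicit} gives, for small $s$, a $C^1$ continuation $s\mapsto\eta_s$ with $h_s(\eta_s)=0$ and $\|\dot\eta_s\|_{T_{\eta_s}\P^n}\leq \mu(h_s,\eta_s)\|\dot h_s\|_{T_{h_s}\S}=\chi_1(h_s,\eta_s)$ (using arc-length parametrization so $\|\dot h_s\|=1$ and the identification $\chi_1=\mu$ on the zero locus from (\ref{eq:22NM})). The maximal interval of continuation will be controlled by showing $\chi_1(h_s,\eta_s)$ stays bounded on $[0,T]$; the hypothesis $T\leq 1/(Pd^{3/2}\hat\varphi)$ will be exactly what is needed for this.

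The core step is a differential estimate for $\varphi$ along $(h_s,\dot h_s,\eta_s)$. Differentiating $\chi_1$ and $\chi_2$ and applying the Shub--Smale higher-derivative estimates for homogeneous polynomials in the Weyl norm (together with the fact that $h_s$ traverses a great circle so that $\ddot h_s=-h_s$), one obtains
\[
\bigl|\tfrac{d}{ds}\varphi(h_s,\dot h_s,\eta_s)\bigr|\leq P\,d^{3/2}\,\varphi(h_s,\dot h_s,\eta_s)^{2}.
\]
This is a Bernoulli inequality, and integrating the two signed versions from $0$ to $s$ yields the two-sided bound
\[
\frac{\hat\varphi}{1+Pd^{3/2}\hat\varphi s}\leq \varphi(h_s,\dot h_s,\eta_s)\leq \frac{\hat\varphi}{1-Pd^{3/2}\hat\varphi s},\qquad s<T,
\]
and simultaneously the finiteness of $\chi_1(h_s,\eta_s)$, justifying \emph{a posteriori} the continuation on the whole $[0,T]$.

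Next, for the projective displacement, I combine the upper bound on $\varphi$ with Lemma \ref{lem:homoto1b} to lower-bound $\chi_1(h_s,\eta_s)$ in terms of $\chi_1(h_0,\eta_0)$, so that
\[
\|\dot\eta_s\|_{T_{\eta_s}\P^n}\;\leq\;\chi_1(h_s,\eta_s)\;\leq\;\frac{\chi_1(h_0,\eta_0)}{1-\sqrt{2}\,P\,d^{3/2}\hat\varphi s \cdot(\text{small factor})}.
\]
Integrating this elementary ODE bound from $0$ to $s$ (a direct substitution $u=1-Pd^{3/2}\hat\varphi s$ yields a power of $u$ with exponent $\sqrt{2}/P$) produces exactly the stated inequality
\[
d_R(\eta_0,\eta_s)\leq \frac{1}{\sqrt{2}\,d^{3/2}\chi_1(h_0,\eta_0)}\Bigl(1-(1-Pd^{3/2}\hat\varphi s)^{\sqrt{2}/P}\Bigr).
\]
Finally, the $d_\S$ inequality follows the same template: arc-length parametrization gives $d_\S(h_0,h_s)\leq s$, but the sharper logarithmic bound stated in the lemma is obtained by replacing the crude constant $\|\dot h_s\|=1$ by its $\chi_2$-weighted version and integrating the analogue Bernoulli estimate for that weighted speed (here $H$ plays the role of $P$ specialized to the $\chi_2$ factor, the exponent coming from $\int ds/(1-c s)=-c^{-1}\log(1-cs)$).

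The main obstacle is the differential estimate $|\dot\varphi|\leq Pd^{3/2}\varphi^{2}$. Everything else is straightforward integration and bookkeeping, but this estimate rests on the nontrivial Shub--Smale bound on the second-order behavior of the Newton operator expressed in Weyl norm, and its proof requires a careful computation of the derivatives of $\chi_1$ and $\chi_2$ along a great-circle motion in $\S$ together with the Lipschitz property of the normalized Jacobian inverse. This is where the specific value of $P$ enters and where the proof is most technical.
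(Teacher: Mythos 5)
This lemma is not proved in the paper at all: it is imported (in a less general form, as the authors themselves note) from \cite[Lemma 5]{Beltran2009NM}, so the only meaningful comparison is with that source, whose mechanism your outline only partially matches. Your treatment of the first estimate is the right idea: a Riccati--type inequality $\left|\frac{d}{ds}\varphi(h_s,\dot h_s,\eta_s)\right|\le P\,d^{3/2}\varphi(h_s,\dot h_s,\eta_s)^2$, integrated on $[0,s]$, does give the two--sided bound, and keeping $\varphi$ finite justifies the continuation up to $T$. But you assume precisely the estimate that carries all of the technical content; in \cite{Beltran2009NM} the constant $P=\sqrt2+\sqrt{4+5/8}$ arises as the sum of two separate rates, roughly $\sqrt2\,d^{3/2}$ for the logarithmic variation of $\chi_1$ and $(P-\sqrt2)\,d^{3/2}$ for that of $\chi_2$ along the lifted curve, and without this splitting the remaining two inequalities are out of reach.

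Concretely, your derivation of the $d_R$ bound fails. From $\|\dot\eta_s\|\le\chi_1(h_s,\eta_s)$ together with an upper bound of the shape $\chi_1(h_s,\eta_s)\le\chi_1(h_0,\eta_0)\,(1-Pd^{3/2}\hat\varphi s)^{-\sqrt2/P}$ (or the cruder $\chi_1(h_0,\eta_0)/(1-\mathrm{const}\cdot s)$ you wrote, which integrates to a logarithm, not a power), integration yields at best $\frac{1}{(P-\sqrt2)d^{3/2}\chi_2(h_0,\dot h_0,\eta_0)}\bigl(1-(1-Pd^{3/2}\hat\varphi s)^{1-\sqrt2/P}\bigr)$: wrong prefactor and wrong exponent. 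Since $\chi_2(h_0,\dot h_0,\eta_0)$ can be of order $1$ while $\chi_1(h_0,\eta_0)$ is large, such a bound is much weaker than the statement and would not suffice where the lemma is used in the proof of Theorem \ref{th:onestep}, e.g.\ at (\ref{eq:te1b}). The bound that reproduces the statement uses instead $\|\dot\eta_s\|\le\|(\dot h_s,\dot\eta_s)\|=\chi_2(h_s,\dot h_s,\eta_s)=\varphi(h_s,\dot h_s,\eta_s)/\chi_1(h_s,\eta_s)$ (cf.\ (\ref{eq:22NM})) combined with the $\chi_2$--rate above, which gives $\chi_2(h_s,\dot h_s,\eta_s)\le\chi_2(h_0,\dot h_0,\eta_0)(1-Pd^{3/2}\hat\varphi s)^{\sqrt2/P-1}$ and, after integration and the identity $\chi_2(h_0,\dot h_0,\eta_0)=\hat\varphi/\chi_1(h_0,\eta_0)$, exactly the stated right--hand side. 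Invoking Lemma \ref{lem:homoto1b} to control $\chi_1$ along the path is also circular as written, since its hypothesis requires an a priori bound on $d_R(\eta_0,\eta_s)$ --- the very quantity being estimated --- so a bootstrap or continuity argument would be needed that you do not supply. Finally, the third inequality is not addressed at all: you argue only by analogy and never identify $H$, which is in fact undefined in the version quoted here and comes from the more general statement in \cite{Beltran2009NM}.
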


Now we proceed to the proof of Theorem \ref{th:onestep}. Recall that we have defined $T=d_\S\left(\frac{g}{\|g\|},\frac{f}{\|f\|}\right)$. Consider the path
\begin{equation}
s\rightarrow h_s=\frac{g}{\|g\|}\cos(s)+\frac{\frac{f}{\|f\|}-\Real(\langle \frac{f}{\|f\|},\frac{g}{\|g\|}\rangle) \frac{g}{\|g\|}}{\sqrt{1-\Real(\langle
\frac{f}{\|f\|},\frac{g}{\|g\|}\rangle)^2}}\sin(s),\;\;\;s\in\left[0,T\right],
\end{equation}
That is, $h_s$ is the arc--length parametrization of the short portion of the great circle joining $g/\|g\|$ and $f/\|f\|$. Note that $\dot h_0=\dot g$ as was defined in Theorem \ref{th:onestep}.

Let
\[
\hat{\chi}_{1}=\chi_1(g,\zeta_0)=\mu(g,\zeta_0),\hat{\chi}_{2}=\chi_2(g,\dot g,\zeta_0),\hat{\varphi}=\varphi(g,\dot g,\zeta_0).
\]
From (\ref{eq:hypz0}) and Lemma \ref{lem:homoto1b} we get
\begin{equation}\label{eq:varphicompareb}
 \hat{\varphi}\frac{(1-\sqrt{2}u_0/2)^{\sqrt{2}}}{1+\sqrt{2}u_0/2}\leq\varphi\leq
\frac{\hat{\varphi}}{(1-\sqrt{2}u_0/2)^{1+\sqrt{2}}}.
\end{equation}
From (\ref{eq:22NM}) and (\ref{eq:varphihatvsmuprev}), we have:
\begin{equation}\label{eq:varphihatvsmu}
\hat{\varphi}\leq\mu(g,\zeta_0)\|\dot g\|\sqrt{1+\mu(g,\zeta_0)^2}\leq\sqrt{2}\mu(g,\zeta_0)^2,
\end{equation}
where for the last equality we have used that $\|\dot h_0\|=1$ and $\mu(g,\zeta)\geq1$. The second inequality of (\ref{eq:varphicompareb}), together with (\ref{eq:varphihatvsmu}), then implies (\ref{eq:varphivsmu}).

It also follows that
\begin{equation}\label{eq:stepb}
T\underset{(\ref{eq:distancefgteor})}\leq \frac{c}{Pd^{3/2}\varphi}\underset{(\ref{eq:varphicompareb}),(\ref{eq:cb})}\leq \frac{c'}{Pd^{3/2}\hat{\varphi}}< \frac{1}{d^{3/2}\hat{\varphi}}.
\end{equation}
Thus, Lemma \ref{lem:homoto2} applies and we conclude that $\zeta_0=\eta_0$ can be continued to $\eta_s\in\S(\C^{n+1})$, a zero of $h_s$, for $0\leq s\leq T$. Now, note that $h_s$ is a reparametrization $s=s(t)$ of the projection of $f_t=(1-t)g+tf$ on $\S$. That is, $h_s=f_{t(s)}/\|f_{t(s)}\|$. Hence, $\zeta_0$ can be continued to $\zeta_t=\eta_{s(t)}$, a zero of $f_t$ as claimed in Theorem \ref{th:onestep}. Note that $\zeta_1=\eta_{s(1)}=\eta_T$. Moreover, Lemma \ref{lem:homoto2} and (\ref{eq:22NM}) also imply that, for $0\leq s\leq T$,
\begin{equation}\label{eq:varphifromlem5NM}
\frac{\hat\varphi}{1+Pd^{3/2}\hat\varphi s}\leq \mu(h_s,\eta_s)\|(\dot h_s,\dot \eta_s)\|_{T_{(h_s,\eta_s)}\S\times\Pn}\leq \frac{\hat\varphi}{1-Pd^{3/2}\hat\varphi s},
\end{equation}
and that
\[
 d_R(\zeta_0,\zeta_1)=d_R(\eta_0,\eta_T)\leq\frac{1}{\sqrt{2}d^{3/2}\hat{\chi}_{1}}\left(1-\left(1-Pd^{3/2}
\hat{\varphi}\frac{c}{Pd^{3/2}\varphi} \right)^{\sqrt{2}/P}\right)\underset{(\ref{eq:varphicompareb}), (\ref{eq:cb})}\leq
\]
\begin{equation}\label{eq:te1b}
\frac{1}{\sqrt{2}d^{3/2}\hat{\chi}_{1}}\left(1-\left(1-c'\right)^{\sqrt{2}/P}\right)\underset{(\ref{eq:cA2b})}{=} \frac{a}{\sqrt{2}d^{3/2}\hat\chi_1}\underset{(\ref{eq:22NM})}{=}  \frac{a}{\sqrt{2}d^{3/2}\mu(g,\zeta_0)}.
\end{equation}

We have seen (\ref{eq:stepb}) that $T\leq c'(Pd^{3/2}\hat{\varphi})^{-1}$. Now, $\hat{\varphi}=\hat{\chi_1}\hat{\chi_2}$ and $\hat{\chi_2}\geq\|\dot h_s\|=1$, which implies
\begin{equation*}
T\leq \frac{c'}{Pd^{3/2}\hat{\chi}_1}\underset{(\ref{eq:22NM})}{=}\frac{c'}{Pd^{3/2}\mu(g,\zeta_0)}\underset{(\ref{eq:inequalityb})}{\leq}\frac{3a}{2\sqrt{2}d^{3/2}\mu(g,\zeta_0)}.
\end{equation*}

Note that this last inequality, (\ref{eq:te1b}) and Lemma \ref{lem:homoto1b} imply
\begin{equation}\label{eq:comparemus}
 \frac{\mu(g,\zeta_0)}{1+a}\leq \mu(f,\zeta_1)\leq\frac{\mu(g,\zeta_0)}{1-a}.
\end{equation}
Thus,
\begin{equation}
 \mu(g,\zeta_0)\geq (1-a)\mu(f,\zeta_1),
\end{equation}
and hence

\begin{equation}\label{eq:te4b}
 T\leq \frac{c'}{Pd^{3/2}(1-a)\mu(f,\zeta_1)}\underset{(\ref{eq:inequalityb})}{\leq}\frac{3\delta u_0}{2d^{3/2}\mu(f,\zeta_1)}.
\end{equation}

Note that
\begin{align*}
d_R(z_0,\zeta_1)\mu(f,\zeta_1)&\leq (d_R(z_0,\zeta_0)+d_R(\zeta_0,\zeta_1))\mu(f,\zeta_1)\\
&\underset{(\ref{eq:hypz0}),(\ref{eq:te1b})}{\leq}\left(\frac{u_0}{2d^{3/2}\mu(g,\zeta_0)}+\frac{a}{\sqrt{2}d^{3/2}\mu(g,\zeta_0)}\right)\mu(f,\zeta_1)\\
&\underset{(\ref{eq:comparemus})}{\leq}\left(\frac{u_0}{2d^{3/2}}+\frac{a}{\sqrt{2}d^{3/2}}\right)\frac{1}{1-a}.
\end{align*}
Our choice of $a$ is such that the right--hand term in this last equation is at most $\delta u_0/d^{3/2}$. Hence,
we have
\begin{equation}\label{eq:appzeroproved2teor}
 d_R(z_0,\zeta_1 )\mu(f,\zeta_1 )\leq \frac{\delta u_0}{d^{3/2}}.
\end{equation}
From (\ref{eq:appzeroproved2teor}) and (\ref{eq:te4b}), Lemma \ref{lem:homoto1b} then yields
\begin{equation}\label{eq:rrrb}
 \frac{\mu(f,\zeta_1)}{1+\sqrt{2}\delta u_0}\leq \chi_1\leq \frac{\mu(f,\zeta_1)}{1-\sqrt{2}\delta u_0}.
\end{equation}
Moreover, from Lemma \ref{prop:aptproj}, (\ref{eq:appzeroproved2teor}) implies that $z_0$ is an approximate zero of $f$ with
associated zero $\zeta_1$. In particular,
\begin{equation}
 d_R(N_\P(f)(z_0),\zeta_1 )\leq\frac{d_R(z_0,\zeta_1 )}{2}\leq \frac{\delta u_0}{2d^{3/2}\mu(f,\zeta_1)}.
\end{equation}
From this and (\ref{eq:appztildeteor}) we have
\[
 d_R(\tilde{z},\zeta_1 )\leq d_R(\tilde{z},N_\P(f)(z_0))+d_R(N_\P(f)(z_0),\zeta_1 )\leq
\]
\[
\frac{(1-\delta)u_0}{2d^{3/2}(1+3\delta u_0/2) \chi_{1}}+\frac{\delta u_0}{2d^{3/2}\mu(f,\zeta_1 )}\leq
\]
\[
\frac{(1-\delta)u_0}{2d^{3/2}(1+\sqrt{2}\,\delta u_0) \chi_{1}}+\frac{\delta u_0}{2d^{3/2}\mu(f,\zeta_1 )}\underset{(\ref{eq:rrrb})}{\leq}
\]
\[
\frac{u_0}{2d^{3/2}\mu(f,\zeta_1 )}\left((1-\delta)+\delta\right)= \frac{u_0}{2d^{3/2}\mu(f,\zeta_1 )},
\]
proving (\ref{eq:teorconclussion}).

As for (\ref{eq:teorconc2}), first note that from Lemma \ref{lem:auxiliar} below,
\begin{equation}\label{eq:auxiliar}
\mathcal{C}_0(f_t,\zeta_t)=\int_0^T\mu(h_s,\eta_s)\|(\dot h_s,\dot \eta_s)\|_{T_{(h_s,\eta_s)}\S\times\Pn}\,ds.
\end{equation}

Now, this last equality and (\ref{eq:varphifromlem5NM}) imply:
\[
\mathcal{C}_0(f_t,\zeta_t)\geq\int_0^T\frac{\hat\varphi}{1+Pd^{3/2}\hat\varphi s}\,ds=
\]
\[
\frac{\ln(1+Pd^{3/2}\hat\varphi T)}{Pd^{3/2}}=\hat\varphi T \frac{\ln(1+Pd^{3/2}\hat\varphi T)}{Pd^{3/2}\hat\varphi T}.
\]
Because $\ln(1+t)/t$ is a decreasing function of $t>0$ and
\[
Pd^{3/2}\hat\varphi T\underset{(\ref{eq:varphicompareb})}{\leq} Pd^{3/2}\frac{1+\sqrt{2}u_0/2}{(1-\sqrt{2}u_0/2)^{\sqrt{2}}}\varphi T\underset{( \ref{eq:distancefgteor})}{\leq} c \frac{1+\sqrt{2}u_0/2}{(1-\sqrt{2}u_0/2)^{\sqrt{2}}}\leq c',
\]
we conclude that
\begin{equation}\label{eq:cotainth}
\mathcal{C}_0(f_t,\zeta_t)\geq \hat\varphi T\frac{\ln(1+c')}{c'}\underset{(\ref{eq:varphicompareb})}{\geq}
\end{equation}
\[
\varphi T\frac{(1-\sqrt{2}u_0/2)^{1+\sqrt{2}}\ln(1+c')}{c'}.
\]
On the other hand, using again (\ref{eq:auxiliar}), we have
\begin{equation}\label{eq:cotainth2}
\mathcal{C}_0(f_t,\zeta_t)\underset{(\ref{eq:varphifromlem5NM})}{\leq} \int_0^T\frac{\hat\varphi}{1-Pd^{3/2}\hat\varphi s}\,ds=
\end{equation}
\[
\hat\varphi T\frac{\log(1-Pd^{3/2}\hat\varphi T)}{-Pd^{3/2}\hat\varphi T}\leq\hat\varphi T\underset{(\ref{eq:varphicompareb})}{\leq}\varphi T\frac{1+\sqrt{2}u_0/2}{(1-\sqrt{2}u_0/2)^{\sqrt{2}}}.
\]
Note that (\ref{eq:cotainth}) and (\ref{eq:cotainth2}) prove (\ref{eq:teorconc2}). This finishes the proof of Theorem \ref{th:onestep}.

\vskip .5cm

We have to prove a lemma that has been used in the proof of Theorem \ref{th:onestep}, and which is nothing but a change of variables:
\begin{lemma}\label{lem:auxiliar}
In the notation of the proof of Theorem \ref{th:onestep}, we have:
\[
\mathcal{C}_0(f_t,\zeta_t)=\int_0^T\mu(h_s,\eta_s)\|(\dot h_s,\dot \eta_s)\|_{T_{(h_s,\eta_s)}\S\times\Pn}\,ds.
\]
\end{lemma}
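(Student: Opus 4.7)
The plan is to recognize the integrand in the definition of $\mathcal{C}_0$ as the condition density on $\S\times\Pn$ evaluated along the projected curve, and then to perform a change of variables from $t$ to arc--length parameter $s$.

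First I would compute the tangent vector to the projected curve $h(t):=f_t/\|f_t\|\in\S$. A direct differentiation gives
\[
\dot h(t)=\frac{\dot f_t}{\|f_t\|}-\frac{\Real\langle\dot f_t,f_t\rangle}{\|f_t\|^3}\,f_t,
\]
from which a short computation (using that the two summands are $\R$--orthogonal) yields
\[
\|\dot h(t)\|^2=\frac{\|\dot f_t\|^2}{\|f_t\|^2}-\frac{\Real\langle\dot f_t,f_t\rangle^2}{\|f_t\|^4}.
\]
This is precisely the first two terms under the square root in the integrand of $\mathcal{C}_0$ in (\ref{eq:condlength}). Hence
\[
\mathcal{C}_0(f_t,\zeta_t)=\int_0^1 \mu(f_t,\zeta_t)\sqrt{\|\dot h(t)\|^2+\|\dot\zeta_t\|_{T_{\zeta_t}\Pn}^2}\,dt.
\]

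Next I would invoke the reparametrization used in the proof of Theorem \ref{th:onestep}: since $h_s$ is the arc--length parametrization of the same great circle as $h(t)$, we have $h(t)=h_{s(t)}$ and $\zeta_t=\eta_{s(t)}$, where $s(t)\in[0,T]$ and $\|\dot h_s\|\equiv 1$. The chain rule then gives $\dot h(t)=s'(t)\dot h_s$ and $\dot\zeta_t=s'(t)\dot\eta_s$, so
\[
\sqrt{\|\dot h(t)\|^2+\|\dot\zeta_t\|_{T_{\zeta_t}\Pn}^2}=s'(t)\sqrt{\|\dot h_s\|^2+\|\dot\eta_s\|_{T_{\eta_s}\Pn}^2}=s'(t)\,\|(\dot h_s,\dot\eta_s)\|_{T_{(h_s,\eta_s)}\S\times\Pn}.
\]
Because $\mu$ is invariant under multiplication of its first argument by a positive scalar (as is clear from (\ref{eq:conditionnumber})), we also have $\mu(f_t,\zeta_t)=\mu(h_{s(t)},\eta_{s(t)})$.

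Finally I would substitute these two facts into the expression for $\mathcal{C}_0$ and perform the change of variables $s=s(t)$, $ds=s'(t)\,dt$:
\[
\mathcal{C}_0(f_t,\zeta_t)=\int_0^1 \mu(h_{s(t)},\eta_{s(t)})\,\|(\dot h_s,\dot\eta_s)\|\,s'(t)\,dt=\int_0^T \mu(h_s,\eta_s)\,\|(\dot h_s,\dot\eta_s)\|\,ds,
\]
which is the desired identity. There is no real obstacle here; the only point that requires a small computation is the identification of $\|\dot h(t)\|^2$ with the expression appearing in $\mathcal{C}_0$, and the rest is a bookkeeping change of variables together with the scale--invariance of $\mu$.
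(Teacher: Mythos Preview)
Your proof is correct. It is exactly the direct change--of--variables computation that the paper acknowledges as a valid route but chooses not to spell out: the paper writes that ``one can just apply the change of variables formula to the change of variables $s=s(t)$ \ldots\ and, after a long computation prove that the two integrals of the lemma are equal,'' and this is what you have done.

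The paper instead gives a one--line geometric argument: $\mathcal{C}_0(f_t,\zeta_t)$ is, by definition, the length of the curve $(f_t/\|f_t\|,\zeta_t)$ in $\S\times\Pn$ equipped with the condition metric, and the length of a curve is invariant under reparametrization; since $(h_s,\eta_s)$ is just the arc--length reparametrization of that same curve, the two integrals agree. Your approach has the virtue of being explicit and self--contained (and in particular makes clear why the $\Real\langle\dot f_t,f_t\rangle$ term appears in the definition of $\mathcal{C}_0$: it is exactly what is needed so that the integrand equals the norm of the tangent vector to the projected curve in $\S$). The paper's approach is shorter and conceptually cleaner once one accepts that $\mathcal{C}_0$ really is a Riemannian length. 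One small point you leave implicit is that $s'(t)\geq 0$, i.e.\ that the projection of the linear segment onto $\S$ is traversed monotonically; this holds here because $g$ and $f$ are non--collinear and the segment does not pass through the origin, so the projected arc is an injective parametrization of a piece of a great circle.
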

\begin{proof}
One can just apply the change of variables formula to the change of variables $s=s(t)$ (so that $h_{s(t)}=f_t/\|f_t\|$ and $\eta_{s(t)}=\zeta_t$) and, after a long computation prove that the two integrals of the lemma are equal. However, we prefer the following geometric argument. The quantity $\mathcal{C}_0(f_t,\zeta_t)$ is by definition the length of the path $(f_t/\|f_t\|,\zeta_t)$ when $\S\times\Pn$ is endowed with the condition metric, resulting from multiplying the usual product metric by the square of the condition number at each pair $(f,z)$. Now, as a length, it is independent of the parametrization and thus $\mathcal{C}_0(f_t,\zeta_t)=\mathcal{C}_0(h_s,\eta_s)$. This is exactly the claim of the lemma.
\end{proof}

\providecommand{\bysame}{\leavevmode\hbox to3em{\hrulefill}\thinspace}
\providecommand{\MR}{\relax\ifhmode\unskip\space\fi MR }
\providecommand{\MRhref}[2]{%
  \href{http://www.ams.org/mathscinet-getitem?mr=#1}{#2}
}
\providecommand{\href}[2]{#2}


\end{document}